\documentclass [twoside,reqno, 12pt] {amsart}

\usepackage{fullpage}

\usepackage{amsfonts,mathtools}
\usepackage{amssymb, tikz}
\usepackage{color}
\usepackage{hyperref}

\usepackage{cancel}
\usepackage{comment}
\usepackage{caption}
\allowdisplaybreaks

\theoremstyle{definition}

\usepackage{listings}
\usepackage{xcolor}

\definecolor{codegreen}{rgb}{0,0.6,0}
\definecolor{codegray}{rgb}{0.5,0.5,0.5}
\definecolor{codepurple}{rgb}{0.58,0,0.82}
\definecolor{backcolour}{rgb}{0.95,0.95,0.92}

\lstdefinelanguage{Magma}{
  morekeywords={PolynomialRing, ideal, Coefficients, GroebnerBasis, EliminationIdeal, MinimalBasis, print},
  sensitive=true,
  morecomment=[l]{//},
  morecomment=[s]{/*}{*/},
  morestring=[b]",
}

\lstdefinestyle{mystyle}{
    backgroundcolor=\color{backcolour},
    commentstyle=\color{codegreen},
    keywordstyle=\color{magenta},
    numberstyle=\tiny\color{codegray},
    stringstyle=\color{codepurple},
    basicstyle=\ttfamily\footnotesize,
    breakatwhitespace=false,
    breaklines=true,
    captionpos=b,
    keepspaces=true,
    numbers=left,
    numbersep=5pt,
    showspaces=false,
    showstringspaces=false,
    showtabs=false,
    tabsize=2
}

\newtheorem{definition}{Definition}[section]
\newtheorem{theorem}[definition]{Theorem}
\newtheorem{lemma}[definition]{Lemma}

\newtheorem{proposition}[definition]{Proposition}
\newtheorem{corollary}[definition]{Corollary}
\newtheorem{remark}[definition]{Remark}
\newtheorem{example}[definition]{Example}

\newcommand{\thmref}[1]{Theorem~\ref{#1}}

\newcommand{\stiffnesstensors}{F}
\newcommand{\positivetensors}{\stiffnesstensors^{\mathrm{pos.}}}
\newcommand{\analyticallyadmissibletensors}{\stiffnesstensors^{\mathrm{an.adm.}}}
\newcommand{\algebraicallyadmissibletensors}{\stiffnesstensors^{\mathrm{alg.adm.}}}

\newcommand{\pairsofmetrics}{E}
\newcommand{\admissiblepairsofmetrics}{\pairsofmetrics^{\mathrm{adm.}}}

\newcommand{\C}{\mathbb{C}}

\newcommand{\Id}{I}

\newcommand{\op}{\operatorname{op}}
\newcommand{\rank}{\operatorname{rank}}

\newcommand{\R}{\mathbb{R}}
\newcommand{\ps}{\mathbb{P}}

\newcommand{\tr}{\operatorname{tr}}

\newcommand{\abs}[1]{\left\lvert#1\right\rvert}

\newcommand{\spoly}[1]{P_{#1}}

\newcommand{\vanish}{\mathbb{V}}
\newcommand{\sphere}{\mathbb{S}}

\newcommand{\V}{\vanish}
\newcommand{\I}{\mathbb{I}}

\DeclareMathOperator{\Sym}{Sym}

\newcommand{\rotA}[1]{\operatorname{Rot}_0(#1)}
\newcommand{\rotB}[1]{\operatorname{Rot}_1(#1)}
\newcommand{\rotC}[1]{\operatorname{Rot}_2(#1)}
\newcommand{\reflA}[1]{\operatorname{Refl}_0(#1)}
\newcommand{\reflB}[1]{\operatorname{Refl}_1(#1)}
\newcommand{\reflC}[1]{\operatorname{Refl}_2(#1)}

\newcommand{\dummy}{{\cdot}}

\title{Travel time tomography for elliptically anisotropic elastic planar media}

\author[Ilmavirta]{Joonas Ilmavirta}
\address{J. Ilmavirta, Department of Mathematics and Statistics, University of Jyv\"askyl\"a, Jyv\"askyl\"a, 40014,  Finland} \email{joonas.ilmavirta@jyu.fi}

\author[Saksala]{Teemu Saksala}
\address{T. Saksala, Department of Mathematics\\
North Carolina State University, Raleigh\\ 
NC 27695, USA}
\email{tssaksal@ncsu.edu}

\author[Schl\"uter]{Hjørdis Schl\"uter}
\address{H. Schl\"uter, Department of Mathematics and Statistics, University of Helsinki, 00014, Finland} \email{hjordis.schluter@helsinki.fi }

\date{\today}

\begin{document}

\begin{abstract}
We study the gauge of the travel time tomography problem for elliptically anisotropic planar media. Under this assumption the respective slowness surfaces consist of an inner ellipse and an outer circle. Hence, the singularities of seismic waves propagate on geodesics given by two Riemannian metrics of which the one related to shear waves is conformally Euclidean. We show that if both of these metrics are lens rigid then, up to boundary fixing diffeomorphisms, each elliptically anisotropic stiffness tensor is uniquely determined by the lens data whenever it has at least one isotropic point. If none of those points exists, then there are exactly two elliptically anisotropic stiffness tensors with the same lens data. 
Clearly, not all boundary fixing diffeomorphisms preserve the imposed structure of the slowness surface. We show that the set of well-behaved diffeomorphisms is never a subgroup of all boundary fixing diffeomorphisms.
Our proofs are based on an interplay of algebra and geometry, and the key of the proofs is a characterization of the elliptic anisotropy via several polynomial conditions. 
\end{abstract}

\maketitle

\section{Introduction}
\label{sec:intro}

We show that a certain type of elliptic anisotropic structure of a two-dimensional planet is uniquely determined by travel times between points on the surface, up to an unusual gauge freedom.
The structural assumption we impose on the stiffness tensor field is that each slowness surface consist of a circle and an ellipse, and this model includes all isotropic materials. Our geometric inverse problem boils down to the famous boundary rigidity problem: ``Does a boundary distance function of a smooth compact Riemannian manifold determine the manifold and the metric tensor up to a smooth boundary fixing diffeomorphism?'' As this problem is not uniquely solvable, we need to set some geometric assumptions for both pressure- and shear-wave ($p$-wave and $s$-wave) geometries.
For instance, it has been proposed in~\cite{uhlmann2004cauchy} that lens rigidity (a slight strengthening of the boundary rigidity question we just asked) holds if the manifold has a strictly convex boundary and all geodesics exit the manifold in a finite time (non-trapping). If the non-trapping assumption is removed it is easy to construct non-isometric manifolds that have the same lens data (and therefore also the boundary distance functions).
Under the additional assumption of \textit{simplicity} (which precludes conjugate points; see \cite[Definition 3.8.1]{Paternain2023}) the affirmative answer for this question was provided by Pestov and Uhlmann in their seminal work~\cite{pestov2005two}.
Whether all convex two dimensional manifolds that have no trapped geodesics are lens rigid is among the most important open geometric inverse problems.

In this paper we assume that all our stiffness tensors give rise to lens rigid $p$- and $s$-wave geometries, and show that up to certain somewhat odd coordinate transformations, 
\begin{itemize}
    \item[(1)] nowhere isotropic, positive definite, elliptic, and smooth stiffness tensor fields, with prescribed $p$- and $s$-wave lens data, come in pairs that give the same data but are otherwise unique.
    \item[(2)] a smooth, positive definite, elliptic, stiffness tensor field with an isotropic point is uniquely determined from its $p$- and $s$-wave lens data. 
\end{itemize}
The conclusion of (1) is that in the elliptically anisotropic elasticity the gauge  for travel time tomography is larger than in the isotropic elasticity (or in the scalar case), while (2) tells us that even one isotropic point reintroduces the rigidity. 

The rigorous version of our first main result is given next. We refer to Section~\ref{subsec_Main_results} for the definition therein and properties of the map $\psi$ that takes a stiffness tensor to the respective $p$- and $s$-wave Riemannian metrics $(g_p,g_s)$. See also Section~\ref{sec:scope} that discusses the scope of this result.

\newcommand{\lens}{\mathcal{L}}
Let us denote the lens data of a Riemannian metric~$g$ by~$\lens_g$.

\begin{theorem}[Elastic lens rigidity]
\label{thm:travelTimes}
Let~$\Omega \subset \R^2$ be a smooth bounded connected domain with two smooth analytically admissible stiffness tensor fields $c_1$ and $c_2$ on $\bar\Omega$, as in Definition \ref{def:admissible}, with $\psi(c_1)=(g_{p,1},g_{s,1})$ and $\psi(c_2)= (g_{p,2},g_{s,2})$. 
Suppose that $(\bar\Omega,g_{p,1})$ and $(\bar\Omega,g_{s,1})$ are lens rigid Riemannian manifolds.
    Then the following are equivalent:
    \begin{enumerate}
        \item 
    $\lens_{g_{1,p}}=\lens_{g_{2,p}}$ and $\lens_{g_{1,s}}=\lens_{g_{2,s}}$.
    \item 
    There exists a diffeomorphism $\phi\colon \bar\Omega \rightarrow \bar\Omega$ with $\phi \vert_{\partial \Omega}=\operatorname{id}$ so that
    \begin{equation}
        c_1 = \psi_\pm^{-1}(\phi^* g_{p,2},g_{s,2}). 
    \end{equation}
    \end{enumerate}
\end{theorem}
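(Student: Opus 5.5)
We argue the two implications separately, relying on the properties of $\psi$ recorded in Section~\ref{subsec_Main_results}. As we understand it, $\psi$ sends an analytically admissible stiffness tensor to a pair $(g_p,g_s)$ in which $g_s$ is conformally Euclidean and $g_p$ is the inner-ellipse metric. Above each admissible pair of metrics, $\psi$ has at most two preimages, labelled by the branches $\psi_\pm^{-1}$. The two preimages coincide exactly at isotropic points, where $g_p$ is a multiple of the Euclidean metric.

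\emph{(2) $\Rightarrow$ (1).} Suppose $c_1=\psi_\pm^{-1}(\phi^*g_{p,2},g_{s,2})$. Then $\psi(c_1)=(\phi^*g_{p,2},g_{s,2})$, so $g_{p,1}=\phi^*g_{p,2}$ and $g_{s,1}=g_{s,2}$. Lens data are invariant under pullback by diffeomorphisms that fix the boundary pointwise. Hence $\lens_{g_{p,1}}=\lens_{\phi^*g_{p,2}}=\lens_{g_{p,2}}$ and $\lens_{g_{s,1}}=\lens_{g_{s,2}}$.

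\emph{(1) $\Rightarrow$ (2).} First we use lens rigidity of $(\bar\Omega,g_{p,1})$ and $(\bar\Omega,g_{s,1})$. It gives boundary-fixing diffeomorphisms $\phi,\chi$ with $g_{p,1}=\phi^*g_{p,2}$ and $g_{s,1}=\chi^*g_{s,2}$.

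The key step is to show that $\chi=\operatorname{id}$. Write $g_{s,i}=e^{2u_i}\,\delta$ with $\delta$ the Euclidean metric. Then $\chi^*(e^{2u_2}\delta)=e^{2u_1}\delta$, so $\chi$ is a conformal map of the Euclidean metric on the planar domain. In complex notation, $\chi$ is holomorphic or antiholomorphic, and it restricts to the identity on $\partial\Omega$.
\begin{itemize}
\item If $\chi$ is antiholomorphic, $\bar\chi$ is holomorphic and equals $\bar z$ on the boundary curve. Then $\bar\chi-\bar z$ is holomorphic off the boundary relation, which cannot hold, since $\bar z$ is not the boundary value of a holomorphic function on a domain with nonempty interior.
\item If $\chi$ is holomorphic, then $\chi(z)-z$ is holomorphic and vanishes on $\partial\Omega$, so it vanishes identically by the identity theorem. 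One could alternatively use the maximum principle on each component of the complement of the boundary.
\end{itemize}
Thus $\chi=\operatorname{id}$ and $g_{s,1}=g_{s,2}$. The conformal factor then carries no diffeomorphism freedom, and $e^{2u_1}=e^{2u_2}$ pointwise.

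Next, $(g_{p,1},g_{s,1})=(\phi^*g_{p,2},g_{s,2})$ lies in the image of $\psi$, because it equals $\psi(c_1)$. We then apply the structural description of the fibres of $\psi$: the only analytically admissible tensors with prescribed metrics are $\psi_+^{-1}$ and $\psi_-^{-1}$ of that pair. Hence $c_1=\psi_\pm^{-1}(\phi^*g_{p,2},g_{s,2})$ for an appropriate choice of sign.

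We expect the main obstacle to be this last step. If $c_1$ is smooth and nowhere isotropic, the sign choice can be made globally and consistently because $\bar\Omega$ is connected: the two branches are separated away from isotropic points, so by continuity the sign cannot switch. If isotropic points exist, the branches meet there, so one must argue, using the paper's definition of $\psi_\pm^{-1}$, that $c_1$ nonetheless coincides with one of them.

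A secondary subtlety is that admissibility of $\phi^*g_{p,2}$ relative to $g_{s,2}$ is not automatic for arbitrary $\phi$. Here, however, it is inherited from $c_1$, since the pair equals $\psi(c_1)$.
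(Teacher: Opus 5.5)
Your skeleton matches the paper's proof: lens rigidity gives boundary-fixing $\phi,\chi$, the conformal one is the identity, hence $\psi(c_1)=(\phi^*g_{p,2},g_{s,2})$, and (2)$\Rightarrow$(1) is invariance of lens data. However, the step you flag as the main obstacle is a genuine gap, and your picture of the fibres of $\psi$ is wrong there.

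The two branches do \emph{not} meet at isotropic points inside $\analyticallyadmissibletensors$. An isotropic tensor has no positive definite anomalous companion (Proposition~\ref{prop:components+psismooth}(5), Corollary~\ref{cor:posdef}(2)), and its formal ``$-$'' companion is only positive semidefinite, cf.~\eqref{eq:circlesemi}. Quantitatively, \eqref{eq:detcpm} and rotation invariance of the determinant give $\det\psi_-^{-1}(g_p,g_s)=q^2\bigl(\sqrt{\lambda_1-q}-\sqrt{\lambda_2-q}\bigr)^2$, where $\lambda_1,\lambda_2$ are the eigenvalues of $g_p^{-1}$ and $g_s^{-1}=qI$. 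This tends to $0$ as $g_p$ becomes isotropic, so the $-$ branch leaves the positive definite cone instead of joining the $+$ branch.

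This is what Proposition~\ref{prop:components+psismooth}(2)--(4) encodes. The sets $\analyticallyadmissibletensors_+$ and $\analyticallyadmissibletensors_-$ are the two connected components of $\analyticallyadmissibletensors$, hence both open in it. The maps $\psi_\pm$ are diffeomorphisms onto $\admissiblepairsofmetrics_\pm$, and every isotropic tensor lies in $\analyticallyadmissibletensors_+$.

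The missing argument is then one line. Since $c_1$ is continuous and $\bar\Omega$ is connected, $c_1(\bar\Omega)$ lies in a single component $\analyticallyadmissibletensors_\epsilon$. Hence $c_1=\psi_\epsilon^{-1}\circ\psi\circ c_1=\psi_\epsilon^{-1}(\phi^*g_{p,2},g_{s,2})$ with one global sign, and $\epsilon=+$ as soon as $c_1$ has an isotropic point. As written, your argument only yields $c_1(x)=\psi_{\epsilon(x)}^{-1}\bigl((\phi^*g_{p,2},g_{s,2})(x)\bigr)$ with a sign that might a priori depend on $x$. The mechanism you suggest, branches crossing at isotropic points, would in fact permit such switching.

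Smaller points concern $\chi=\operatorname{id}$, where the paper just cites \cite{ashwin-elasticity} and your self-contained argument is a reasonable replacement. In the holomorphic case the identity theorem does not apply to zeros on $\partial\Omega$. Use instead the maximum modulus principle for $\chi(z)-z$ on the bounded domain $\Omega$.

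The antiholomorphic case is most cleanly excluded by orientation. A diffeomorphism of $\bar\Omega$ fixing $\partial\Omega$ pointwise fixes boundary tangent vectors and maps inward vectors inward, so $\det D\chi>0$. Alternatively, $\oint_{\partial\Omega}\bar z\,dz=2i\,\mathrm{Area}(\Omega)\neq0$, whereas $\oint_{\partial\Omega}f\,dz=0$ for $f$ holomorphic in $\Omega$ and smooth up to the boundary. Your sentence about $\bar\chi-\bar z$ being ``holomorphic off the boundary relation'' does not make sense as stated.
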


\begin{remark}
Analytic admissibility of the stiffness tensor~$c$ requires that $g_p<g_s$ as quadratic forms at every point and the diffeomorphism~$\phi$ pulls back~$g_p$ but not~$g_s$, so sufficiently exotic diffeomorphisms will easily make $(\phi^* g_{p},g_{s})$ fall outside the range of~$\psi$.
The set of well-behaved diffeomorphisms~$\phi$ depends on the slowness surface of the stiffness tensor field~$c$. We show in Section~\ref{sec:Diffeos} that this set is open in~$C^1$ topology, but never a subgroup of all boundary fixing diffeomorphisms.
\end{remark}

Our proofs rely on an interplay between algebraic and geometric descriptions of elliptic elasticity. Our key tool is a polynomial characterization of elliptic anisotropy (see propositions~\ref{prop:alg-admissible} and~\ref{prop:an-admissible}) without assuming that the symmetry axes coincide with the coordinate axes.
The conditions are, of course, invariant under rotations and reflections of the plane.
Algebraic tools also allow us to show that in elliptic anisotropy the correspondence between smooth stiffness tensor fields and $p/s$-wave metrics is smooth, which helps keep the statement of Theorem~\ref{thm:travelTimes} smooth.
We give a brief introduction to algebraic geometry to the uninitiated reader in Appendix~\ref{app:AG}.

\subsection{Elliptically Anisotropic Elasticity}

We model the medium by an open, connected, and bounded domain $\Omega \subset \R^2$ with smooth boundary $\partial \Omega$.
The elastic properties in linear elasticity are described by its density $\rho$ (a scalar field) and stiffness tensor $c$ (a rank 4 tensor field modeling the linear strain-to-stress map).
Stress and strain are both symmetric $2\times2$ matrices, which leads to the minor symmetry
\begin{equation}\label{eq:sym1}
c_{ijkl}
=
c_{jikl}
=
c_{ijlk}
.
\end{equation}
Energy considerations lead to the additional major symmetry
\begin{equation}\label{eq:sym2}
c_{ijkl}
=
c_{klij}
\end{equation}
and the positivity property%
\footnote{Here and henceforth we follow the Einstein sum convention despite all our indices being down. That is, $c_{ijkl}A_{ij}A_{kl}\coloneqq\sum_{i,j,k,l=1}^2c_{ijkl}A_{ij}A_{kl}$.}
\begin{equation}
\label{eq:pos}
c_{ijkl}A_{ij}A_{kl} >0
\end{equation}
for all symmetric non-zero $2\times 2$-matrices~$A$.
These properties are satisfied at all points $x\in\bar\Omega$.

The small displacements $u(t,x)$ from equilibrium satisfy the elastic wave equation
\begin{equation}
\label{eq:elastic_wave_eq}
\rho(x)\partial_t^2u_i(t,x)
-
\partial_{x_j}[c_{ijkl}(x)\partial_{x_k}u_l(t,x)]
=
0.
\end{equation}
The propagation of singularities of a solution $u$ to this equation can be understood through microlocal analysis.
As it turns out that the kinematics of singularities (that is, ignoring amplitude information) only depends on the ratio $c/\rho$.
We therefore set $\rho=1$ for the rest of the discussion, as it will have no kinematic effect.

The slowness surface (or the affine characteristic variety of the elastic wave operator) is the vanishing set of the determinant of the principal symbol.
More concretely, fix any point $x\in\bar\Omega$.
Let us denote the cotangent vector atop it by $\xi\in T_x^*\R^2=\R^2$.
The Christoffel matrix $ \Gamma_{i\ell}(x,\xi)$ is given by
\begin{equation}
    \Gamma_{i\ell}(x,\xi)\coloneqq c_{ijk\ell}(x)\xi_j \xi_k, \quad x \in \bar \Omega.
\end{equation}
Positivity and symmetry of this matrix follow from the properties of the stiffness tensor.
The determinant is given by the slowness polynomial
\begin{equation}\label{eq:spoly}
\spoly{c}(x,\xi)
\coloneqq 
\det \left( \Gamma(x,\xi) - I \right),
\end{equation}
which is indeed a degree $4$ polynomial in $\xi$.
The slowness surface at $x\in\bar\Omega$ is the vanishing set of $\spoly{c}(x,\dummy)$ in $T_x^*\bar\Omega$.

A stiffness tensor is said to be elliptic when the slowness surface is a union of two ellipses.
(This is not to be confused with the spatial part of the elastic wave operator being elliptic, which is a far more general condition.)
Our focus is on a subclass of elliptic stiffness tensors we call \emph{admissible}.

\begin{definition}\label{def:admissible}
A  positive definite symmetric stiffness tensor, as in \eqref{eq:sym1}--\eqref{eq:pos}, is said to be \emph{algebraically admissible} 
if its slowness surface is a union of a circle and an ellipse, both centered at the origin.
If in addition the circle and the ellipse do not meet, the stiffness tensor is said to be \emph{analytically admissible}.
\end{definition}
Figure~\ref{fig:Admissible} illustrates slowness surfaces of admissible stiffness tensors and figure~\ref{fig:Elliptic} those of inadmissible elliptic ones.

Through out the paper we use the Voigt notation to write the stiffness tensor as
\begin{equation}
\label{eq:Voigt-matrix}
c
=
\begin{bmatrix}
    c_{1111} & c_{1122} & c_{1112}\\
     & c_{2222} & c_{1222}\\
    &  & c_{1212}
\end{bmatrix}
=
\begin{bmatrix}
    c_{11} & c_{12} & c_{13}\\
    & c_{22} & c_{23}\\
    && c_{33}
\end{bmatrix}
\end{equation}
for the stiffness tensors and note that due to the symmetries \eqref{eq:sym1}--\eqref{eq:sym2} and positivity~\eqref{eq:pos} of the stiffness tensor this $(3\times 3)$-matrix is symmetric and positive definite (see Lemma~\ref{lem:posdefcond} for details). 

In Lemma~\ref{lem:fixed_axes} we show that the cartesian coordinate axes are the symmetry axes of $c$ if and only if $c_{13}=c_{23}=0$.
In Lemma~\ref{lma:special-characterization} we show that such a $c$ is analytically admissible if and only if
\begin{equation}
\label{eq:ellipticity_in_fixed_axes}
(c_{12}+c_{33})^2=(c_{11}-c_{33})(c_{22}-c_{33})\neq 0.
\end{equation}
Our description of elliptic materials in 2D is, of course, closely related to that in 3D.

\begin{figure}
    \centering
    \begin{minipage}[t]{0.5\textwidth}
        \centering
        \includegraphics[width=0.7\textwidth]{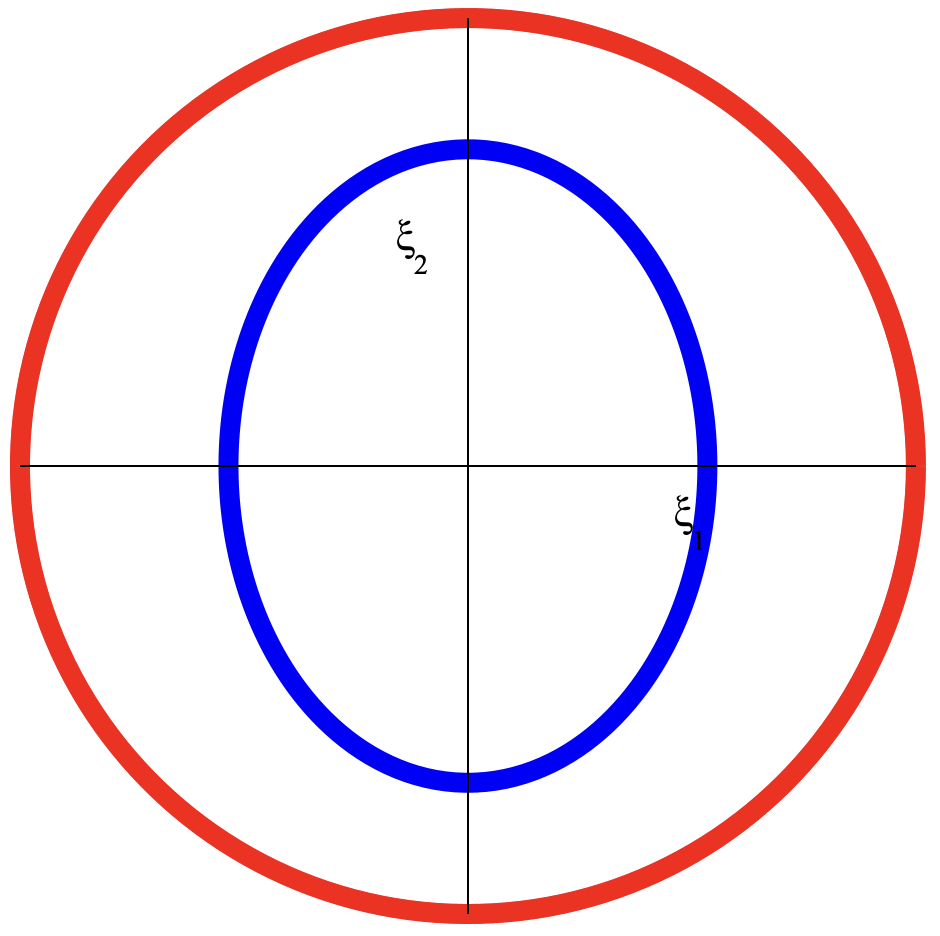}
        \caption*{Analytically admissible.}
    \end{minipage}%
    \begin{minipage}[t]{0.5\textwidth}
        \centering
        \includegraphics[width=0.7\textwidth]{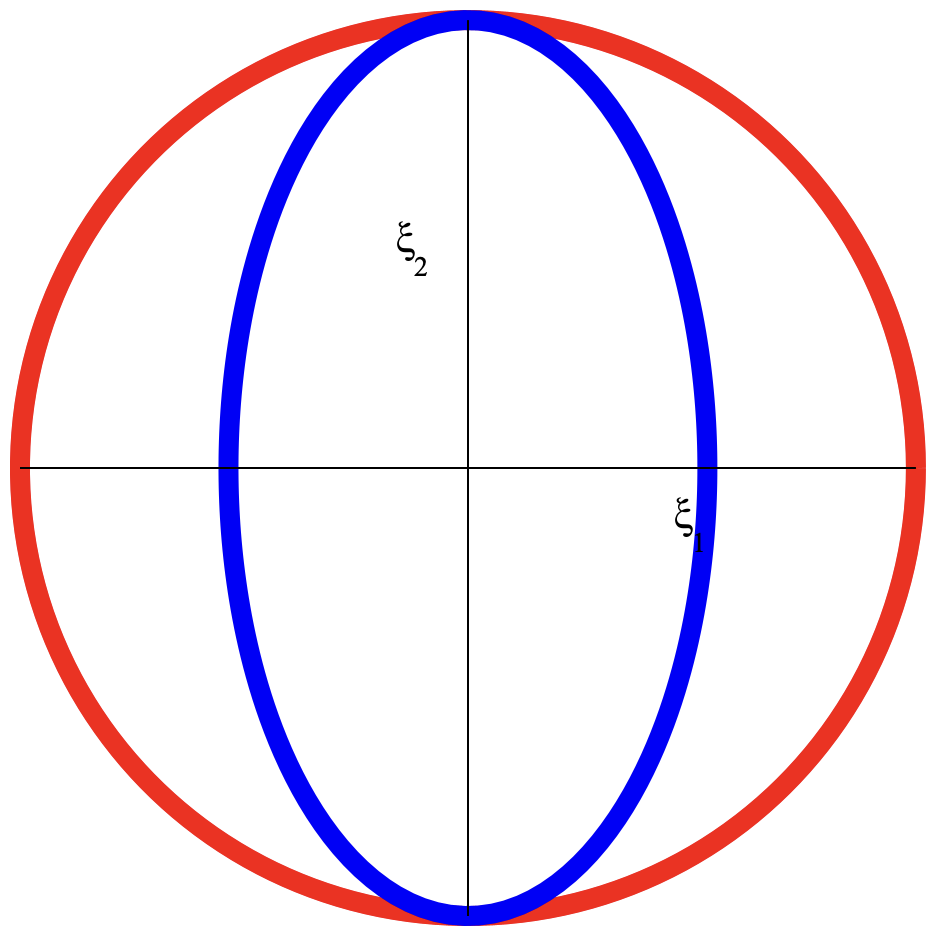}
        \caption*{Algebraically admissible.}
    \end{minipage}
    \caption{Slowness surfaces corresponding to the stiffness tensors $\begin{bmatrix}
       7 & -2-\sqrt{10} & 0\\
       -2-\sqrt{10} & 4 & 0\\
       0 & 0 & 2
       \end{bmatrix}$
       and 
       $\begin{bmatrix}
       7 & -2 & 0\\
       -2 & 2 & 0\\
       0 & 0 & 2
    \end{bmatrix}$.}
    \label{fig:Admissible}
\end{figure}
\begin{figure}
    \centering
    \begin{minipage}[t]{0.33\textwidth}
        \centering
        \includegraphics[width=0.8\textwidth]{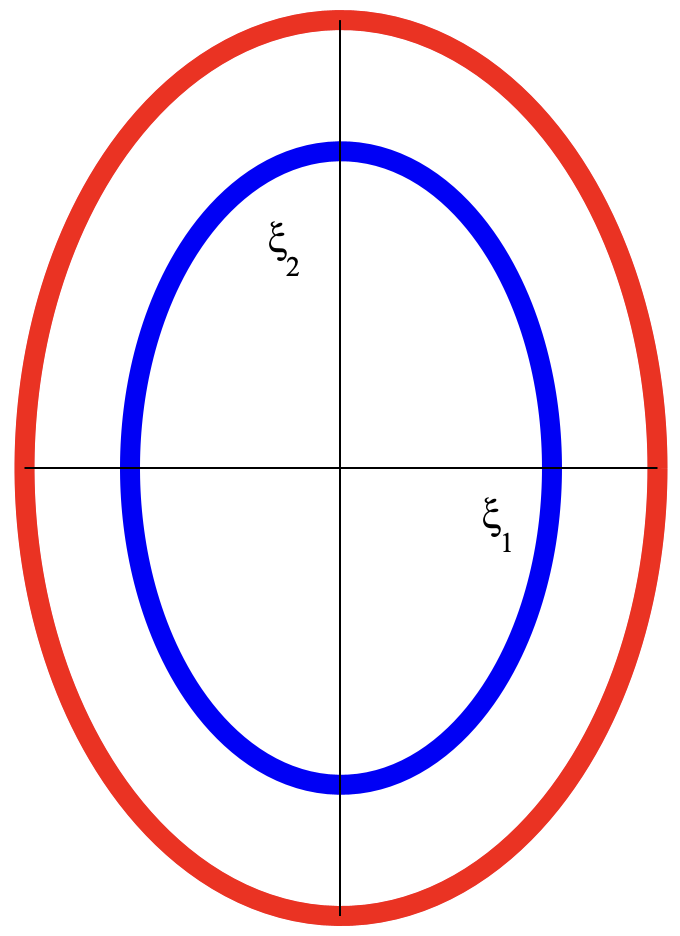}
    \end{minipage}%
    \begin{minipage}[t]{0.33\textwidth}
        \centering
        \includegraphics[width=\textwidth]{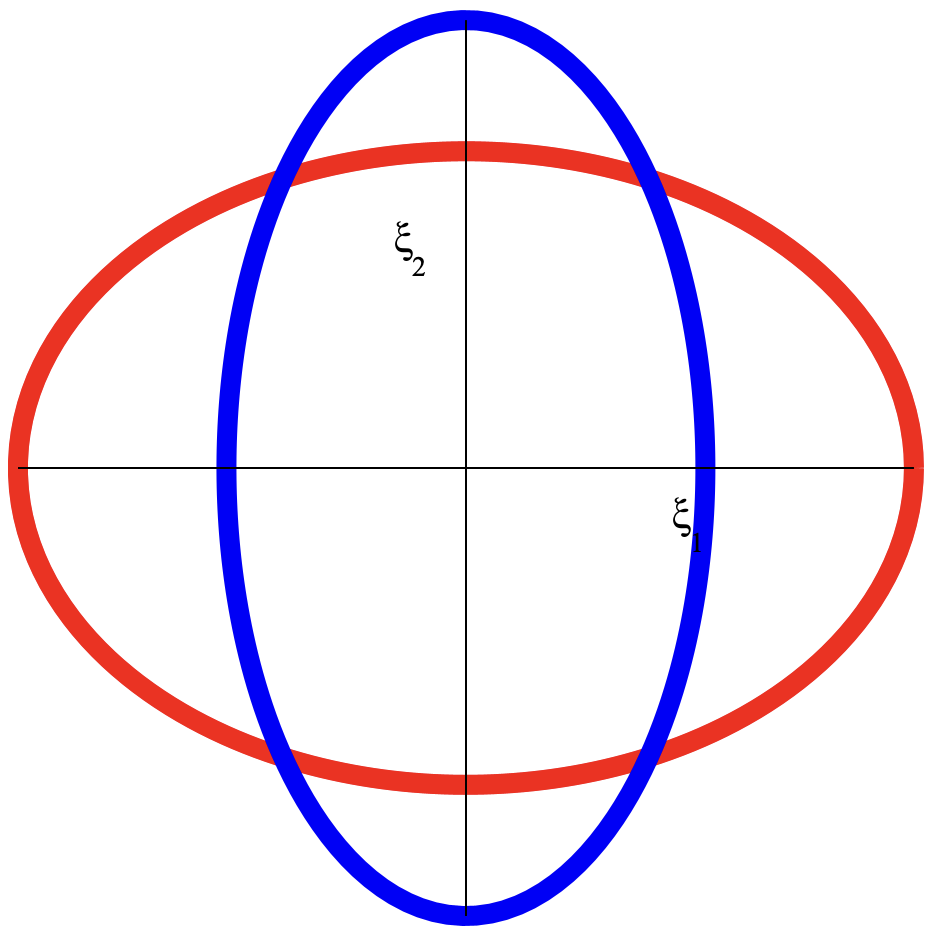}
    \end{minipage}%
    \begin{minipage}[t]{0.33\textwidth}
        \centering
        \includegraphics[width=0.9\textwidth]{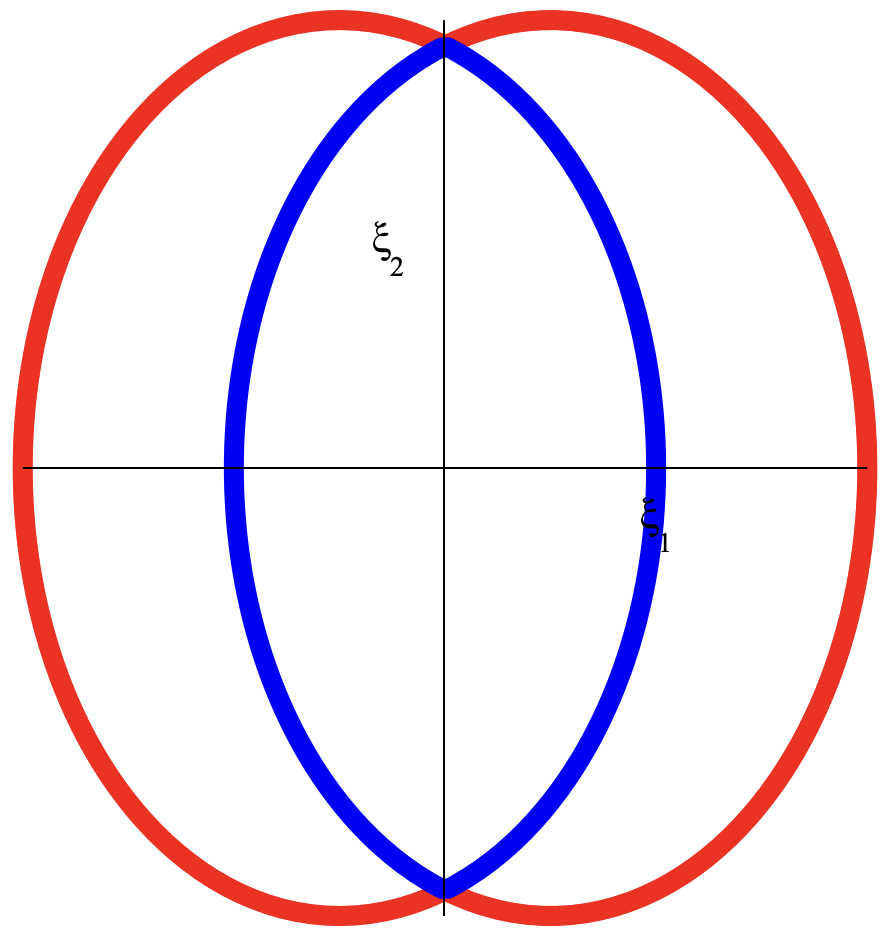}
    \end{minipage}
    \caption{Slowness surfaces of some elliptic stiffness tensors that are not considered in this paper. Each one is a union of two ellipses in a different way. The corresponding stiffness tensors are
{\tiny$\begin{bmatrix}
9 & -4 & 0\\
-4 & 2 & 0\\
0 & 0 & 4
\end{bmatrix}$,
$\begin{bmatrix}
7 & -2 & 0\\
-2 & 4 & 0\\
0 & 0 & 2
\end{bmatrix}$,
$\begin{bmatrix}
4 & 0 & 0\\
0 & 1 & 0\\
0 & 0 & 1
\end{bmatrix}$
}.
}
    \label{fig:Elliptic}
\end{figure}

In 3D Chadwick and Norris~\cite{ChadwickNorris1990} describe conditions for a slowness surface to be a union of aligned ellipsoids, and a condition very similar to~\eqref{eq:ellipticity_in_fixed_axes} makes several appearances.
The indices and accompanying linear assumptions vary depending on the case, but orthorhombic symmetry is always required.
Mazzucato and Rachele~\cite{MazzucatoRachele2008} describe all classes of inhomogeneous, transversely isotropic elastic media for which the sheets associated to each wave mode are ellipsoids. 
They also derive pointwise conditions that guarantee that the sheet of the slowness surface corresponding to a given wave mode is disjoint from the others.
They also show~\cite{MazzucatoRachele2007} that the hyperbolic DN-map of the elastic wave equation uniquely determines the travel time between boundary points for a disjoint mode if there is one.
Elliptically anisotropic materials have also been studied experimentally~\cite{bhuiyan2016anisotropic}.

Ellipticity in 2D is the same as reducibility of the slowness polynomial.
Among all stiffness tensors reducibility is rare as shown in~\cite{de2023reconstruction} and further quantified in~\cite{2d-elastic-characterization}.
In the present paper we do not classify all cases of elliptic anisotropy in 2D, but rather focus on giving a complete geometric description of a specific subcase.

\subsection{Kinematic inverse problems}

An ellipse centered at the origin is the unit sphere of a norm arising from an inner product.
For an admissible stiffness tensor the slowness surface atop each $x\in\bar\Omega\subset\R^2$ is given by an ellipse and a circle, so for an admissible stiffness tensor field the slowness surface at each point is the union of the unit cospheres of two Riemannian metric tensor fields.

The singularities of solutions to the elastic wave equation~\eqref{eq:elastic_wave_eq} propagate along a Hamiltonian flow whose level sets these sheets of the slowness surface are~\cite{dencker1982propagation}.
In other words, the singularities of the elastic wave equations 
\eqref{eq:elastic_wave_eq} follow the Riemannian geodesics of the two metrics.
One of the metrics corresponds to $p$-waves (faster) and the other one to $s$-waves (slower).
The distance between two points in this geometry is the travel time of waves between the points.
Without any ellipticity assumption of the stiffness tensor the singularities will typically not follow any Riemannian geodesic flow.

Arrival time measurements at $\partial\Omega$ for elastic waves from sources on $\partial\Omega$ therefore correspond to the restriction of the Riemannian distance function to $\partial\Omega\times\partial\Omega$.
Whether this boundary distance data uniquely determines the Riemannian manifold is the boundary rigidity question.
If the geometry is assumed to be simple, 
one can also compute the initial and final directions of any geodesic between two boundary points.
This enriched data is called lens data, and the corresponding geometric inverse problem is that of lens rigidity.

A geometric description of propagation of singularities is not restricted to the elliptic stiffness tensors.
In non-elliptic cases the singularities on the $qP$ (quasi-$p$) branch follow Finsler geometries.
Regularity of this geometry in 2D was studied in~\cite{IKK25}, and it gives a concrete condition for the $qP$ and $qS$ branches of the slowness surface never touching without any assumption of ellipticity.

\subsection{Anomalous companions}

As described above, a stiffness tensor (material parameters at a point) gives rise to a slowness surface, and these slowness surfaces at all points determine the kinematics of elastic waves.
But does the slowness surface uniquely determine the underlying stiffness tensor?

This uniqueness is often but not always true~\cite{de2023reconstruction,2d-elastic-characterization}.
Two stiffness tensors that give rise to the same slowness surface are called \emph{anomalous companions}.
In other words, if $f$ is the map that takes a stiffness tensor $c$ to the corresponding slowness surface $\Sigma_c$, then the set of anomalous companions of $c$ is $f^{-1}(f(c))\setminus\{c\}$.
In our specific setting the role of $f$ is played by the map $\psi$ defined below.

\subsection{Our scope}
\label{sec:scope}

This paper focuses on applying the results for these classical geometric inverse problems to the kinematics of the elastic wave equation for admissible materials.
Our main task is to find all analytically admissible stiffness tensor fields that give rise to the same lens data for both $p$ and $s$ waves.

The complete answer we give to this question requires understanding the gauge freedom of the geometric inverse problem and the correspondence between stiffness tensors and slowness surfaces.
Theorem~\ref{thm:travelTimes} pertains to this correspondence.

The reason to focus on elliptic anisotropy is to be able to draw from the large body of literature on geometric inverse problems for Riemannian manifolds.
A more general theory adapted to elastic geometry instead of e.g. fully general Finsler geometry is not sufficiently developed yet.
As it turns out, even in this seemingly simple setting the complete answer is subtle.

There are many reasons to focus on two dimensions.
First, in three dimensions every slowness surface of an almost but not quite isotropic stiffness tensor is non-smooth~\cite{IKK25}, so analytically admissible stiffness tensors in the sense one might hope simply do not exist.
Second, the algebraic geometry we use to analyze the correspondence between material properties and geometry is much more tractable (both by hand and by computer) in two dimensions.
Third, boundary rigidity is not known for general simple manifolds in three or more dimensions.

In an isotropic medium the kinematic information or lens data can be extracted from the hyperbolic Dirichlet-to-Neumann map of the elastic wave equation~\eqref{eq:elastic_wave_eq}  \cite{rachele2000inverse,stefanov2017elastic}. See also Section~\ref{sec:related} for more details. We expect this to be true for quite general stiffness tensors including the elliptic ones we study in this paper.
Full PDE data will contain more information than the lens data, but typical analysis will start with extracting the principal data and using it to solve a geometric inverse problem.
Only then does one usually proceed to study subprincipal effects and further constrain the material parameters \cite{bhattacharyya2018local, rachele2003uniqueness, zhai2025determination}.
In our admissible setting these further constraints would concern the density field $\rho$, the diffeomorphism $\phi$, and the anomalous companions. 
The present paper lays a major part of the foundation for such an analysis by fully solving the principal level geometric inverse problem in terms of the material parameters.

\subsection{Main results}
\label{subsec_Main_results}

\subsubsection{Pointwise properties}
\label{sec:pointwise}

Let us denote by $\analyticallyadmissibletensors$ the space of analytically admissible stiffness tensors, see Definition~\ref{def:admissible}, while the notation $\admissiblepairsofmetrics$ is reserved for the space of pairs $(g_p,g_s)$ of positive definite quadratic forms so that $g_s-g_p$ is positive definite but not zero and $g_s$ is a multiple of the identity matrix.
We denote by $\admissiblepairsofmetrics_-\subset\admissiblepairsofmetrics$ the subset of those pairs $(g_p,g_s)$ where the first quadratic form is not a multiple of the identity.
We also denote $\admissiblepairsofmetrics_+=\admissiblepairsofmetrics$ for convenience.
Finally, we let $\psi\colon \analyticallyadmissibletensors \to \admissiblepairsofmetrics$ be the map that takes an analytically admissible stiffness tensor to the corresponding quadratic forms associated to $p/s$-wave metrics:
\begin{equation}
    \psi(c)=(g_p,g_s).
\end{equation}

We verify some properties of the quantities defined above. Particularly, we show that the correspondence between $c$ and $(g_p,g_s)$ is diffeomorphic. 
These are later used to state our second main theorem (the size of the gauge group).

\begin{proposition}
\label{prop:components+psismooth}
The map $\psi\colon \analyticallyadmissibletensors\to \admissiblepairsofmetrics$ satisfies the following properties:
\begin{enumerate}
\item
The map $\psi$ is onto.
\item
The set $\analyticallyadmissibletensors\subset\R^6$ is a smooth $4$-dimensional submanifold and has two connected components, $\analyticallyadmissibletensors_+$ and $\analyticallyadmissibletensors_-$.
\item
The map $\psi$ is smooth 
and the restrictions $\psi_\pm=\psi|_{\analyticallyadmissibletensors_\pm}\colon \analyticallyadmissibletensors_\pm\to \admissiblepairsofmetrics_\pm$ are diffeomorphisms.
\item
The set $\psi^{-1}(\admissiblepairsofmetrics\setminus \admissiblepairsofmetrics_-)\subset \analyticallyadmissibletensors$ coincides with the set of isotropic stiffness tensors.
\item
The map $\psi$ is 1-to-1 on the set of isotropic stiffness tensors and 2-to-1 outside it.
\end{enumerate}
\end{proposition}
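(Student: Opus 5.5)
The plan is to reduce everything to an explicit normal form in principal axes, and then to write the inverse of $\psi$ as an invariant tensor formula. That formula makes smoothness and the component structure transparent.

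\emph{Step 1 (normal form).} By Lemma~\ref{lem:fixed_axes}, after a rotation of the plane we may assume $c_{13}=c_{23}=0$. The Christoffel matrix is then
\begin{equation*}
\Gamma=\begin{bmatrix} c_{11}\xi_1^2+c_{33}\xi_2^2 & (c_{12}+c_{33})\xi_1\xi_2\\ (c_{12}+c_{33})\xi_1\xi_2 & c_{33}\xi_1^2+c_{22}\xi_2^2\end{bmatrix}.
\end{equation*}
Under the condition \eqref{eq:ellipticity_in_fixed_axes} of Lemma~\ref{lma:special-characterization}, a direct computation gives $\det\Gamma=c_{33}\abs{\xi}^2(c_{11}\xi_1^2+c_{22}\xi_2^2)$. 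Moreover $\tr\Gamma$ is the sum of these two quadratic forms, so the eigenvalues of $\Gamma$ are $c_{33}\abs{\xi}^2$ and $c_{11}\xi_1^2+c_{22}\xi_2^2$. The slowness surface is therefore the circle $c_{33}\abs{\xi}^2=1$ together with the ellipse $c_{11}\xi_1^2+c_{22}\xi_2^2=1$. Hence $g_s^{-1}=c_{33}I$ and $g_p^{-1}=\operatorname{diag}(c_{11},c_{22})$.

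Analytic admissibility (the curves do not meet, with $p$ faster) is then equivalent to $a:=c_{11}-c_{33}>0$ and $b:=c_{22}-c_{33}>0$. The tensor is determined by $(a,b,\mu:=c_{33})$ up to the choice
\begin{equation*}
c_{12}=-\mu\pm\sqrt{ab}.
\end{equation*}
Positive definiteness of the Voigt matrix (Lemma~\ref{lem:posdefcond}) reduces to $\mu>0$ and
\begin{equation*}
c_{11}c_{22}-c_{12}^2=\mu(\sqrt a\pm\sqrt b)^2>0.
\end{equation*}
This always holds for the $+$ sign. For the $-$ sign it fails exactly when $a=b$, i.e.\ when $g_p$ is isotropic. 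The isotropic tensors ($c_{11}=c_{22}=\lambda+2\mu$, $c_{12}=\lambda$) lie on the $+$ branch. This already gives surjectivity (1), and the counting statements (4) and (5), once rotations are reinserted.

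\emph{Step 2 (invariant inverse formula).} For $(g_p,g_s)\in\admissiblepairsofmetrics$ with $g_s^{-1}=\mu I$, set $B=g_p^{-1}-\mu I$, which is positive definite. Let $S=\sqrt B$ be the positive square root, a smooth (indeed real-analytic) function of $B$. On $\admissiblepairsofmetrics_-$ let $S'$ be $S$ with the eigenvalue on the eigenline of the larger eigenvalue of $B$ negated. This is smooth there because the two eigenvalues are distinct, so the spectral projections are smooth. Define
\begin{equation*}
c_{ijkl}=\mu(\delta_{ik}\delta_{jl}+\delta_{il}\delta_{jk}-\delta_{ij}\delta_{kl})+S_{ij}S_{kl},
\end{equation*}
and similarly with $S'$ in place of $S$. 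In the principal frame this reproduces exactly $c_{11}=a+\mu$, $c_{22}=b+\mu$, $c_{12}=-\mu\pm\sqrt{ab}$, $c_{33}=\mu$. Since the formula is rotation equivariant and Step 1 covers the principal frame, these two maps are inverses of $\psi$ on their respective branches.

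\emph{Step 3 (structure and smoothness).} The map $\psi$ is smooth because $g_s^{-1}$ and $g_p^{-1}$ are obtained by factoring the slowness polynomial $\det(\Gamma-I)$. Concretely, $\mu$ is a smooth function of $c$, for instance via a resultant or discriminant coefficient identity. Alternatively, in the principal frame $\mu=c_{33}$, and one can check that the invariant expression for $\mu$ in terms of the entries of $c$ is polynomial. Then $g_p^{-1}=\Gamma$-data minus $\mu I$ is read off linearly. So $\psi_\pm$ and the inverses from Step 2 are smooth and mutually inverse, which gives (3).

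The sets $\analyticallyadmissibletensors_\pm$ are the images of the connected $4$-dimensional manifolds $\admissiblepairsofmetrics_+$ (an open cone condition in $(g_p,\mu)$) and $\admissiblepairsofmetrics_-$ under smooth embeddings. For the latter, connectedness follows from parametrising by $\mu>0$, an angle, and $a>b>0$. Modulo the half-turn ambiguity of the angle this is still connected. The two images are disjoint by Step 1, since the sign of $c_{12}+c_{33}$ in principal axes separates them. They are also each closed in $\analyticallyadmissibletensors$, because the closure of the $-$ branch would hit $a=b$, where positivity fails. This yields (2).

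\emph{Main obstacle.} I expect the main difficulty to be smoothness of $\psi$ itself (rather than of its inverse) in a frame-free way near isotropic tensors, where the principal axes are undefined. I would first try to express $\mu$ as the repeated factor: $\mu\abs\xi^2$ divides $\det(\Gamma-I)$, so comparing coefficients should give $\mu$ as a rational, nonvanishing-denominator function of $c$. If that fails, I would instead obtain smoothness of $\psi$ from the inverse function theorem applied to the smooth bijections of Step 2, after checking that their differentials are injective.
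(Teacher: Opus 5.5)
For (1), (4) and (5), your principal-axis normal form plus rotation equivariance is essentially the paper's argument. For (2)--(3) you take a genuinely different route. The paper runs a Jacobian-criterion and implicit-function argument on the graph $G$, with rank conditions certified by Magma. You instead write $\psi_\pm^{-1}$ in closed form as $\mu(\delta_{ik}\delta_{jl}+\delta_{il}\delta_{jk}-\delta_{ij}\delta_{kl})+S_{ij}S_{kl}$. That formula is correct: it reproduces the principal-frame entries, it is equivariant, and $S'\otimes S'$ does not even depend on which eigenline you negate. It gives smoothness of the inverses, the dimension count, and connectedness of the branches without computer algebra.

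One small slip in Step 1: ``$p$ faster'' is not part of Definition~\ref{def:admissible}. Disjointness only forces $a,b$ to have the same strict sign, so $a,b<0$ must be ruled out by positivity, as in Lemma~\ref{rem:discond}. Your own formula gives $c_{11}c_{22}-c_{12}^2=\mu(a+b\pm2\sqrt{ab})\le 0$ in that case.

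The genuine gap is Step 3, and it is load-bearing. Call your Step 2 maps $\chi_\pm$. Smooth bijections onto $\analyticallyadmissibletensors_\pm$ do not by themselves make $\analyticallyadmissibletensors_\pm$ embedded submanifolds or $\psi_\pm$ smooth. You need $D\chi_\pm$ injective and $\chi_\pm^{-1}=\psi$ continuous, since an injective immersion need not be an embedding. Your closedness argument for the branches also silently uses $\psi(c_n)\to\psi(c)$.

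Neither suggestion you give supplies this. The idea that $\mu$ is polynomial in $c$ is false. Comparing coefficients in $\det\Gamma_c(\xi)=\mu\abs{\xi}^2\,\xi^T(M-\mu I)\xi$, with $M=\begin{pmatrix}c_{11}+c_{33}&c_{13}+c_{23}\\c_{13}+c_{23}&c_{22}+c_{33}\end{pmatrix}$ the matrix of $\tr\Gamma_c$, gives $\mu$ linearly only with coefficients $c_{11}-c_{22}$ and $c_{13}+c_{23}$. These are the traceless part of $g_p^{-1}$ and vanish at every isotropic tensor. There both roots of the remaining quadratic give valid circle-times-ellipse factorizations, which is exactly the case you feared.

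The missing observation is that the correct root is a simple eigenvalue. The matrix $\Gamma_c(e_1)=\begin{pmatrix}c_{11}&c_{13}\\c_{13}&c_{33}\end{pmatrix}$ has eigenvalues $\mu<g_p^{-1}(e_1,e_1)$. Hence
\[
\mu(c)=\tfrac12\Bigl(c_{11}+c_{33}-\sqrt{(c_{11}-c_{33})^2+4c_{13}^2}\Bigr),
\qquad
g_p^{-1}=M(c)-\mu(c)I
\]
define a smooth extension $\tilde\psi$ of $\psi$ (in the variables $(g_p^{-1},g_s^{-1})$) to an open neighbourhood of $\analyticallyadmissibletensors$ in $\R^6$. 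Then $\tilde\psi\circ\chi_\pm=\operatorname{id}$ gives injectivity of $D\chi_\pm$ and continuity of $\chi_\pm^{-1}$. So the $\chi_\pm$ are smooth embeddings, and the rest of your argument for (2) and (3) goes through.
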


We would like to point out two implications of this result: Firstly, all positive definite isotropic stiffness tensors are contained in $\analyticallyadmissibletensors_+$. Secondly, no analytically admissible stiffness tensor can have a slowness surface where the ellipse encloses the circle. Note that this is not explicitly forbidden in Definition~\ref{def:admissible} but is a consequence of positiveness~\eqref{eq:pos}. 

We characterize algebraically and analytically admissible ones among all stiffness tensors in Propositions~\ref{prop:alg-admissible} and~\ref{prop:an-admissible} in terms of explicit polynomial conditions.

\subsubsection{Global properties}
\label{sec:global}

Section~~\ref{sec:pointwise} concerns stiffness tensors and quadratic forms, while our main results concern stiffness tensor fields and Riemannian metrics.
In Theorem~\ref{thm:travelTimes} we described the geometric inverse problem of recovering an analytically admissible smooth stiffness tensor $c\in \analyticallyadmissibletensors$ from the lens data of the respective pair of Riemannian metrics $\psi(c)$, which by Proposition~\ref{prop:components+psismooth} are smooth.
In general the eigenvalues $\lambda_1(x,\xi),\lambda_2(x,\xi)$ of the Christoffel matrix $\Gamma(x,\xi)$ need not be smooth in $x$.
This is one of the crucial reasons why we require the ellipse and circle to be disjoint; this assumption makes the eigenvalues simple and therefore well-behaved.

As the lens data stay invariant over all diffeomorphisms $\phi \colon \bar \Omega \to \bar\Omega$ that fix all boundary points, this introduces the natural gauge for problems of this type.
A Riemannian metric can only be defined up to a boundary fixing diffeomorphism from its lens data.
Unlike in many other geometric inverse problems, not all diffeomorphisms $\phi$ can appear in our elastic setting due to the imposed structure of the set $\admissiblepairsofmetrics$.
Our next main theorem describes this phenomenon.

\begin{theorem}[Size of the preimage of $\psi$]
\label{thm:how-many-stiffness-tensor-fields}
Let $\Omega \subset \R^2$ be a smooth bounded connected domain and let $c\colon\bar\Omega\to \analyticallyadmissibletensors$ be a smooth stiffness tensor field.
For a diffeomorphism $\phi\colon\bar\Omega\to\bar\Omega$ and two metrics $(g,h)\in C^\infty(\bar\Omega,\admissiblepairsofmetrics)$, let us denote $\phi^*(g,h)=(\phi^*g,h)$ so that the pullback only acts on the first metric tensor field.

Let us denote by $\psi^{-1}(\phi^*\psi(c))$ the set of admissible stiffness tensor fields $c'$ for which $\psi(c')=\phi^*\psi(c)$.
We always have that $\#\psi^{-1}(\phi^*\psi(c))\in\{0,1,2\}$.

More specifically, denoting $\psi(c)\eqqcolon(g_p,g_s)$ we have
\begin{enumerate}
\item
$\#\psi^{-1}(\phi^*\psi(c))=0$ if and only if there is  $x \in \bar\Omega$ and $v\in \R^2\setminus\{0\}$ for which $\abs{v}_{\phi^\ast g_p}\geq \abs{v}_{g_s}$ at $x$.
\item
$\#\psi^{-1}(\phi^*\psi(c))=1$ if and only if $\phi^\ast g_p$ is a multiple of the identity matrix at some point $x\in\bar\Omega$ and
$g_s>\phi^\ast g_p$ in all of $\bar\Omega$.
\item
$\#\psi^{-1}(\phi^*\psi(c))=2$ if and only if $\phi^\ast g_p$ is nowhere a multiple of the identity and
$g_s>\phi^\ast g_p$ in all of $\bar\Omega$.
\end{enumerate}
\end{theorem}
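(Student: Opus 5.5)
The plan is to reduce everything to the pointwise statement Proposition~\ref{prop:components+psismooth} and then handle the passage from points to smooth fields. Set $(g,h)\coloneqq\phi^*\psi(c)=(\phi^*g_p,g_s)$. Since $g_s$ is a multiple of the identity at every point and $\phi^*g_p$ is a smooth positive definite form field, the pair $(g,h)(x)$ lies in $\admissiblepairsofmetrics$ exactly when $g_s-\phi^*g_p$ is positive definite at $x$. (It cannot vanish there: $g_s-\phi^*g_p>0$ excludes equality.) A field $c'$ with $\psi(c')=(g,h)$ exists only if $(g,h)(x)\in\admissiblepairsofmetrics$ for every $x$, because $\psi$ takes values in $\admissiblepairsofmetrics$. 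So if $\abs{v}_{\phi^*g_p}\ge\abs{v}_{g_s}$ for some $x$ and $v\neq0$, the preimage is empty. This gives the ``if'' direction of (1). The ``only if'' direction will follow once we show that $g_s>\phi^*g_p$ everywhere yields at least one preimage.

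Now assume $g_s>\phi^*g_p$ on $\bar\Omega$, so that $(g,h)\in C^\infty(\bar\Omega,\admissiblepairsofmetrics)=C^\infty(\bar\Omega,\admissiblepairsofmetrics_+)$. By part (3) of Proposition~\ref{prop:components+psismooth}, $\psi_+^{-1}$ is a diffeomorphism $\admissiblepairsofmetrics_+\to\analyticallyadmissibletensors_+$. Hence $c_+\coloneqq\psi_+^{-1}\circ(g,h)$ is a smooth analytically admissible field, and the preimage is nonempty.

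If moreover $\phi^*g_p$ is nowhere a multiple of the identity, then $(g,h)$ takes values in the open set $\admissiblepairsofmetrics_-$, and $c_-\coloneqq\psi_-^{-1}\circ(g,h)$ is a second smooth field. It differs from $c_+$ at every point, since $\analyticallyadmissibletensors_\pm$ are disjoint components. So there are at least two preimages.

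To bound the count from above, take any smooth $c'$ with $\psi(c')=(g,h)$. By part (5), at each point $c'(x)$ is one of at most two tensors, namely $\psi_+^{-1}$ or $\psi_-^{-1}$ of $(g,h)(x)$. Since $c'$ is continuous and $\analyticallyadmissibletensors_+$ and $\analyticallyadmissibletensors_-$ are disjoint and relatively open in $\analyticallyadmissibletensors$ (part (2)), the set $\{x: c'(x)\in\analyticallyadmissibletensors_+\}$ is open and closed in the connected set $\bar\Omega$. Therefore $c'$ lies entirely in one component, and $c'=c_+$ or $c'=c_-$. This gives $\#\le 2$, and hence exactly $2$ in case (3).

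It remains to settle case (2). Suppose $\phi^*g_p(x_0)$ is a multiple of the identity at some $x_0$. By part (4), $\psi^{-1}((g,h)(x_0))$ consists of a single isotropic tensor, which lies in $\analyticallyadmissibletensors_+$. So any preimage field meets $\analyticallyadmissibletensors_+$, and by the connectedness argument it lies entirely in $\analyticallyadmissibletensors_+$; thus it equals $c_+$. The count is therefore exactly one.

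Since the three conditions in (1)--(3) are mutually exclusive and exhaustive, the ``if'' directions already give the ``only if'' directions. In particular $\#\in\{0,1,2\}$.

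The main obstacle is the connectedness argument, which rests on the components $\analyticallyadmissibletensors_\pm$ being genuinely separated, i.e.\ relatively open. I will read this off part (2) of Proposition~\ref{prop:components+psismooth}, which asserts that $\analyticallyadmissibletensors$ is a manifold with exactly two connected components; components of a manifold are open. I will also use that $\bar\Omega$ is connected, which holds because $\Omega$ is a connected domain.
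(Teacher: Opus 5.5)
Your proof is correct and follows essentially the same route as the paper. Both arguments reduce to the pointwise Proposition~\ref{prop:components+psismooth}, use connectedness of $\bar\Omega$ to force any preimage field into a single component $\analyticallyadmissibletensors_\pm$, and identify it with $\psi_\pm^{-1}\circ(\phi^*g_p,g_s)$, where the $-$ branch exists exactly when $\phi^*g_p$ is nowhere a multiple of the identity. The remaining differences are cosmetic: you separate $c_+$ from $c_-$ by disjointness of the components rather than by the determinant ordering of Lemma~\ref{lem:companion-normal-form}, and the openness of the components you flag as the main obstacle is not needed, since the connected image $c'(\bar\Omega)$ lies in a single connected component regardless.
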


According to case (3) of Theorem \ref{thm:how-many-stiffness-tensor-fields} nowhere isotropic analytically admissible stiffness tensor fields that give rise to the same wave speed metrics come in pairs. These two stiffness tensor fields are the anomalous companions. The important consequence of Theorem~\ref{thm:how-many-stiffness-tensor-fields} is its implication to the geometric inverse problem stated in Theorem~\ref{thm:travelTimes}.

The set $\analyticallyadmissibletensors$ has two connected components as per Proposition~\ref{prop:components+psismooth}(2).
Therefore a smooth stiffness tensor field on a connected domain will only take values in one component.

\subsubsection{Proof of Theorem~\ref{thm:travelTimes}, and implications of theorems~\ref{thm:travelTimes} and~\ref{thm:how-many-stiffness-tensor-fields}}

\begin{proof}[Proof of Theorem~\ref{thm:travelTimes}]
First we note that by Proposition~\ref{prop:components+psismooth} the metrics $g_{p/s,1/2}$ are smooth. Hence $(\bar\Omega, g_{p/s,1/2})$ are smooth Riemannian manifolds.

Suppose that (1) is true.
Since $g_{p/s,1}$ is lens rigid and the lens data of $g_{p/s,2}$ is that of $g_{p/s,1}$ there are some boundary fixing diffeomorphisms $\phi, \varphi\colon \bar \Omega \to \bar \Omega$, such that 
\begin{equation}
g_{p,1}=\phi^\ast g_{p,2}
\quad \text{and} \quad
g_{s,1}=\varphi^\ast g_{s,2}.
\end{equation}
Since $g_{s,1}$ and $g_{s,2}$ are conformally Euclidean metrics, the map $\varphi$ is conformal, and as it is an identity on the boundary of $\Omega$ it is actually the identity of $\bar \Omega$~\cite{ashwin-elasticity}.
Therefore, $g_{s,1}=g_{s,2}$, and we have proved 
\begin{equation}
\psi(c_1)=(g_{p,1},g_{s,1})=(\phi^\ast g_{p,2},g_{s,2}).
\end{equation}
Since $c_1 \in \analyticallyadmissibletensors$ it holds that $(\phi^\ast g_{p,2},g_{s,2}) \in \admissiblepairsofmetrics$.
By part (3) of Proposition \ref{prop:components+psismooth} $\psi_\pm$ is a bijective diffeomorphism and it follows that
    \begin{equation}
        c_1 = \psi_\pm^{-1}(\phi^* g_{p,2},g_{s,2}).
    \end{equation}

Suppose that (2) is true. Since $c_1 \in \psi^{-1}(\phi^*\psi(c_2))$ it follows by the definition of $\psi$ and $\phi^*(g,h)=(\phi^*g,h)$ that 
\begin{equation}
    (g_{p,1},g_{s,1})=\psi(c_1)=\phi^* \psi(c_2)=\phi^*(g_{p,2},g_{s,2})=(\phi^*g_{p,2},g_{s,2}).
\end{equation}
Thus, the metrics $g_{p,2}$ and $\phi^*g_{p,1}$, as well as $g_{s,2}$ and $g_{s,1}$, have the same lens data.
Since $\phi$ is a smooth diffeomorphism fixing all boundary points, also the metrics $g_{p,2}$ and $g_{p,1}$ have the same lens data.
\end{proof}

The conclusion of Theorem \ref{thm:travelTimes} needs an interpretation. Since we have that
\begin{equation}
\psi(c_1)=(\phi^* g_{p,2},g_{s,2})
\end{equation}
we see in the light of Theorem \ref{thm:how-many-stiffness-tensor-fields} that
$
c_1 = \psi_+^{-1}(\phi^* g_{p,2},g_{s,2})
$
if $c_1$ has even one isotropic point in $\bar \Omega$, and thus no anomalous companions. On the other hand, if no such points exist then $c_1$ has exactly one anomalous companion $\tilde c_1$. Particularly, the pairs of Riemannian metrics arising from the elastic tensors $c_1$, $\tilde c_1$ and $c_2$ do all have the same lens data and we cannot distinguish these tensors just from this kinematic data.

On the other hand one can turn the question around and ask:
Given an analytically admissible stiffness tensor field $c$, for which diffeomorphisms $\phi$ do we have $\phi^*\psi(c) \in \admissiblepairsofmetrics$?
Analytic admissibility requires that $g_p<g_s$ as quadratic forms at every point and the diffeomorphism pulls back $g_p$ but not $g_s$, so sufficiently exotic diffeomorphisms will easily make $\phi^*\psi(c)$ fall outside the range of $\psi$.
The set of well-behaved diffeomorphism $\phi$ depends on the stiffness tensor field $c$, is open in $C^1$ topology, but is never a subgroup of the group of all diffeomorphisms.
In addition to proving these properties in Section~\ref{sec:Diffeos} we provide a characterization of admissible diffeomorphisms. Particularly, we prove in Theorem~\ref{thm:admdiff:admissibility-preserving-diffeomorphisms} that a diffeomorphism $\phi \colon \bar \Omega \to \bar \Omega$ preserves the admissible structure exactly when it is a $q$-Lipschitz map from $(\bar \Omega,g_s)$ to $(\bar \Omega, g_p)$ for some $q\in (0,1)$.

\subsection{Structure of the paper}

The main contributions of this paper are Theorem~\ref{thm:travelTimes}, Proposition~\ref{prop:components+psismooth}, Theorem~\ref{thm:how-many-stiffness-tensor-fields}, and Theorem~\ref{thm:admdiff:admissibility-preserving-diffeomorphisms}. 
The bulk of our work lies in characterizing admissible stiffness tensors via polynomial conditions. This is carried out in sections \ref{sec:AlgAdStiffness} and \ref{sec:CharacSymmetry}. In sections~\ref{sec:psibranches} and~\ref{sec:ImplicitFuncThm} we prove  Proposition~\ref{prop:components+psismooth} and Theorem~\ref{thm:how-many-stiffness-tensor-fields}.
Once Proposition~\ref{prop:components+psismooth} is established the proof of Theorem~\ref{thm:how-many-stiffness-tensor-fields} follows directly from the definitions of the map $\psi$ and the pullback $\phi^*$ as well as Lemma~\ref{rem:discond} (the circle encloses the ellipse).
In the final section~\ref{sec:Diffeos} we study the properties of admissible diffeomorphisms and characterize this set.
We conclude the introductory section~\ref{sec:intro} to a review of related works.

\subsection{Inverse problems for wave equations via geometrization}
\label{sec:related}

A central approach to inverse problems for wave equations is to recover
the geometry governing wave propagation from boundary measurements and
then determine the coefficients from this geometry. In elasticity,
microlocal constructions and propagation of singularities \cite{Gr-Sj,dencker1982propagation} connect the hyperbolic
Dirichlet-to-Neumann (DN) map to travel times and lens data. This
approach underlies Rachele's work on isotropic elasticity \cite{rachele2000boundary,rachele2000inverse} and its extensions to
anisotropic media with a disjoint wave mode \cite{MazzucatoRachele2007,MazzucatoRachele2008}. When the corresponding
slowness sheets are ellipsoids, the resulting geometric inverse
problems concern Riemannian metrics. This problem is the boundary or lens rigidity.

Michel's conjecture~\cite{michel1981rigidite}, which asserts boundary
rigidity of simple Riemannian manifolds, was proved in two dimensions
by Pestov and Uhlmann~\cite{pestov2005two}. In higher dimensions,
results include rigidity within a conformal class~\cite{croke1991rigidity}, near the Euclidean metric~\cite{burago2010boundary}, and for generic simple metrics~\cite{SU3}.
The local methods of Uhlmann--Vasy~\cite{uhlmann2016inverse} and
Stefanov--Uhlmann--Vasy~\cite{stefanov2017local} also yield rigidity
results under suitable strictly convex foliation assumptions.
These assumptions have become central to several elastic inverse
problems. Further background can be found in
\cite{survey,paternain2023book}.

For the scalar wave equation on a compact Riemannian manifold, the
Boundary Control method recovers the geometry, up to isometry, from
sufficiently long boundary measurements without simplicity or convex
foliation assumptions
\cite{belishev1987approach,belishev1992reconstruction,Katchalov2001}.
The method relies on unique continuation~\cite{Tataru} and connects
the hyperbolic problem with the inverse boundary spectral problem.
For time-dependent Lorentzian metrics, unique continuation and
reconstruction require additional hypotheses
\cite{eskin2007inverse,Alexakis_Feiz_Oksanen_22,Alexakis_Feiz_Oksanen_23}.
Rigidity relative to Minkowski space~\cite{oksanen2024rigidity} and
nonuniqueness examples~\cite{oksanen2026counterexamples} illustrate
the different behavior of the general Lorentzian problem.

The scalar theory does not directly extend to elasticity because
elastic waves form a coupled system with several propagation speeds.
For isotropic elasticity, unique continuation is available
\cite{eller2002uniqueness}, but the boundary control results in
\cite{belishev2002dynamical} concern the smaller region reached by
the slower $s$-waves, rather than the entire region reached by the
$p$-waves. This limits the direct application of the Boundary Control
method and motivates approaches based on wave splitting or the
recovery of travel-time geometry.

For isotropic media, wave splitting yields determination of the wave
speeds when the shear modulus is constant~\cite{belishev2006dynamical}.
Rachele's results
\cite{rachele2000boundary,rachele2000inverse,rachele2003uniqueness}
recover the Lam\'e parameters and density under simplicity and
additional curvature assumptions, with density recovery excluding
the locus $\lambda=2\mu$. Under strictly convex foliation assumptions,
Stefanov--Uhlmann--Vasy~\cite{stefanov2017elastic} recover the
$p$- and $s$-wave speeds, and~\cite{bhattacharyya2018local} extends
the recovery to $(\lambda,\mu,\rho)$ away from that locus.
Zhai~\cite{zhai2025determination} removes the restriction
$\lambda=2\mu$. A recent rigidity result in~\cite{ilmavirta2026rigidity} shows that equality of the hyperbolic
DN map with that of a homogeneous Lam\'e system forces the unknown
system to be homogeneous, without imposing additional geometric
assumptions on the unknown coefficients.

In anisotropic elasticity, coordinate transformations introduce
further issues. Mazzucato and Rachele~\cite{mazzucato2006partial}
study transformations fixing the boundary to first order, invariance
of the hyperbolic DN map, and partial uniqueness for anisotropic
media in the orbit of isotropic media. The work
\cite{ilmavirta2023gauge} instead treats displacement as a vector
field under coordinate changes and studies the resulting invariances
of the Euclidean and Riemannian elastic wave equations. These include
joint conformal scalings of the metric, stiffness tensor, and density.
Such equation-level invariances must be distinguished from the
ambiguities of travel-time data considered here, which retain only
the geometry of propagation.

For general anisotropy, even the geometric reduction presents
difficulties: a disjoint wave mode may define a Finsler geometry
rather than a Riemannian one~\cite{de2019inverse}. Simple Finsler
manifolds need not be boundary rigid~\cite{ivanov2013local}, although
these counterexamples do not by themselves establish nonuniqueness
within the class of geometries arising from elasticity. Positive
results therefore often exploit additional material structure.
For piecewise analytic coefficients,~\cite{de2019unique} proves
global uniqueness in an orthorhombic setting and also treats fully
anisotropic, piecewise constant stiffness tensors.

For transversely isotropic media,~\cite{de2020recovery} uses
$qSH$ travel times to recover the symmetry-axis tilt and some
material parameters under suitable convexity assumptions. The
$qP$ and $qSV$ data determine further parameters when the remaining
ones are known or satisfy suitable relations and nondegeneracy
conditions. The stability estimates of~\cite{zou2024partial}
provide related partial recovery results, including injectivity for
parameter differences supported in sets of small width. Another
approach, developed in~\cite{lassas2026determination}, determines
fully anisotropic perturbations of an isotropic homogeneous reference
from single-scattering data, with additional stability and rigidity
results for isotropic perturbations.

The present paper studies the ambiguity remaining at the level of
travel times for elliptically anisotropic planar media. The two wave
modes define Riemannian metrics, with the $s$-wave metric conformally
Euclidean. Assuming lens rigidity, we characterize the admissible
transformations of the $p$-wave metric and the algebraic ambiguity
in recovering the stiffness tensor from the metric pair. This
identifies the roles of coordinate freedom and anomalous companions
in this class of media.

\subsection*{AI disclosure and use of computational tools}

During the preparation of this manuscript, OpenAI’s ChatGPT GPT-6 Astra and Anthropic’s
Claude Opus 5 \& 5.5 were used for language editing and condensing presentation.
The proofs of Proposition~\ref{prop:two-circles}, Lemma~\ref{lem:rotation-orbit}, Proposition~\ref{prop:components+psismooth} (1),(4) \& (5) and Theorem~\ref{thm:how-many-stiffness-tensor-fields} were carried out with minor assistance from Claude. The proofs of Proposition~\ref{prop:alg-admissible}, Proposition~\ref{prop:two-circles} and Proposition~\ref{prop:components+psismooth} (2) \& (3) were computer assisted by Magma~\cite{magma}. 
Section~\ref{sec:Diffeos} is based on conversations with ChatGPT. Most of the computations in this section were carried out by ChatGPT GPT-6 Astra while verified by the authors.
The authors take responsibility for the all mathematical statements and proofs presented in this paper.

\subsection*{Acknowledgments}
JI was supported by the Research Council of Finland (Flagship of Advanced Mathematics for Sensing Imaging and Modelling grant 359208; Centre of Excellence of Inverse Modelling and Imaging grant 353092; and other grants 351665, 351656, 358047, 360434) and a Väisälä project grant by the Finnish Academy of Science and Letters.
TS was supported by the National Science Foundation (DMS-2510272) and the  Simons Foundation Travel Support for Mathematicians (MPS-TSM-00013291).
HS was supported by the Research Council of Finland (Flagship of Advanced Mathematics for Sensing Imaging and Modelling grant 359208).
We thank Antonio Cocan for helping troubleshoot the proof of Lemma~\ref{lem:smoothMap}. We are grateful to Jian Zhai and Hanming Zhou for helpful discussions and suggestions.

\section{Characterizations and properties of algebraically and analytically admissible stiffness tensors}
\label{sec:AlgAdStiffness}
This section is devoted to characterizing algebraically and analytically admissible stiffness tensors and subclasses of these such as isotropic stiffness tensors. Additionally, we prove properties related to rotations and reflections of such stiffness tensors. These characterizations and properties are needed for the proofs of Proposition \ref{prop:components+psismooth} and Theorem \ref{thm:how-many-stiffness-tensor-fields}.
For rotations and reflections we consider the following notation throughout this paper:

We note that the symmetric tensor square $\Sym^2(\R^2)$ of $\R^2$ is the space of symmetric real $2\times2$ matrices.
Stiffness tensors are elements of the space $\Sym^2(\Sym^2(\R^2))$.
The tautological representation of $O(2)$ on $\R^2$ induces representations on $\Sym^2(\R^2)$ and $\Sym^2(\Sym^2(\R^2))$.
For the rotation by an angle $\theta$,
\begin{equation}
\rotA{\theta}
=
\begin{bmatrix}
\cos(\theta) & -\sin(\theta)\\
\sin(\theta) & \cos(\theta)
\end{bmatrix}
\in O(2)
,
\end{equation}
and the reflection around the line at angle $\theta$,
\begin{equation}
\reflA{\theta}
=
\begin{bmatrix}
\cos(2\theta)  & \sin(2\theta) \\
\sin(2\theta)  & -\cos(2\theta) 
\end{bmatrix}
\in O(2)
,
\end{equation}
we denote the induced actions by
$\rotB{\theta},\reflB{\theta}\in GL(\Sym^2(\R^2))$
and
$\rotC{\theta},\reflC{\theta}\in GL(\Sym^2(\Sym^2(\R^2)))$.
The actions of $\rotB{\theta}$ and $\reflB{\theta}$ on a symmetric real $2\times2$ matrix $A$ are
\begin{align}
    (\rotB{\theta}A)_{mn}
&=
A_{ij}
\rotA{\theta}_{mi}
\rotA{\theta}_{nj}
,
\label{eq:rotB}\\
    (\reflB{\theta}A)_{mn}
&=
A_{ij}
\reflA{\theta}_{mi}
\reflA{\theta}_{nj}
.\label{eq:reflB}\
\end{align}

The actions of $\rotC{\theta}$ and $\reflC{\theta}$ on a stiffness tensor $c$ are
\begin{align}
    (\rotC{\theta}c)_{mnop}
&=
c_{ijk\ell}
\rotA{\theta}_{mi}
\rotA{\theta}_{nj}
\rotA{\theta}_{ok}
\rotA{\theta}_{p \ell},\label{eq:rotC}\\\
(\reflC{\theta}c)_{mnop}
&=
c_{ijk\ell}
\reflA{\theta}_{mi}
\reflA{\theta}_{nj}
\reflA{\theta}_{ok}
\reflA{\theta}_{p \ell}
.\label{eq:reflC}
\end{align}
The entries of $\rotC{\theta}c$ and $\reflC{\theta}c$ are reported in Appendix \ref{ap:rotrefl}. Before we characterize algebraically and analytically admissible stiffness tensors we make the following observation concerning positive definiteness of stiffness tensors that will be used throughout this paper:
\begin{lemma}
\label{lem:posdef}
A stiffness tensor $c$ is positive definite if and only if the matrix~\eqref{eq:Voigt-matrix} is positive definite.
\end{lemma}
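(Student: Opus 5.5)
The plan is to write the quadratic form $A\mapsto c_{ijkl}A_{ij}A_{kl}$ explicitly in the coordinates $(A_{11},A_{22},A_{12})$ of a symmetric matrix $A$. Then we compare it with the quadratic form of the Voigt matrix~\eqref{eq:Voigt-matrix}. The point is that the map $A\mapsto (A_{11},A_{22},A_{12})$ is a linear isomorphism $\Sym^2(\R^2)\to\R^3$. So positivity of $c$ on nonzero symmetric matrices is the same as positivity of some $3\times 3$ quadratic form on nonzero vectors of $\R^3$.

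First, we expand the sum using the minor symmetries~\eqref{eq:sym1} and the major symmetry~\eqref{eq:sym2}. Every index pair $(i,j)$ with $i\neq j$ appears twice, as $12$ and $21$. The terms then group as
\begin{equation*}
c_{ijkl}A_{ij}A_{kl}
=
c_{11}A_{11}^2+c_{22}A_{22}^2+2c_{12}A_{11}A_{22}
+4c_{13}A_{11}A_{12}+4c_{23}A_{22}A_{12}+4c_{33}A_{12}^2 .
\end{equation*}
Here the Voigt entries are as in~\eqref{eq:Voigt-matrix}, so $c_{13}=c_{1112}$, $c_{23}=c_{1222}$ and $c_{33}=c_{1212}$. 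Second, we substitute $y=(A_{11},A_{22},2A_{12})$. The right-hand side becomes exactly $y^TCy$, where $C$ is the symmetric Voigt matrix: the factor $4=2\cdot 2$ on the mixed terms and the factor $4$ on $A_{12}^2$ are absorbed by the doubled third coordinate. Third, the map $A\mapsto y$ is a linear bijection from $\Sym^2(\R^2)$ onto $\R^3$, so $A\neq 0$ if and only if $y\neq 0$. Hence~\eqref{eq:pos} holds for all nonzero $A$ if and only if $y^TCy>0$ for all nonzero $y$, i.e.\ $C$ is positive definite.

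The only delicate step is the bookkeeping of multiplicities in the expansion, that is, checking the factors $2$ and $4$ from the repeated off-diagonal indices. I would check these by counting the ordered index quadruples that correspond to each Voigt entry: for instance $c_{1112}$ arises from $(1,1,1,2)$, $(1,1,2,1)$, $(1,2,1,1)$ and $(2,1,1,1)$, which gives four terms. No earlier result is needed beyond the symmetries~\eqref{eq:sym1}--\eqref{eq:sym2}. One could instead use the substitution $y=(A_{11},A_{22},A_{12})$ with a rescaled matrix. Positive definiteness is unaffected by such a rescaling, since the two quadratic forms are related by the congruence $\operatorname{diag}(1,1,2)$.
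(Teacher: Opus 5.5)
Your proof is correct and is the paper's argument: it also uses $\hat A=(A_{11},A_{22},2A_{12})$, the identity $c_{ijk\ell}A_{ij}A_{k\ell}=\hat A^T c\,\hat A$, and the fact that $A\mapsto\hat A$ is a linear bijection onto $\R^3$. Your explicit count of the multiplicities (the factors $2$ and $4$ from repeated off-diagonal indices) fills in the expansion that the paper leaves to the reader.
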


\begin{proof}
If we express a symmetric $2\times2$ matrix as a $3$-vector as $\hat A=(A_{11},A_{22},2A_{12})$, then
\begin{equation}
        c_{ijk\ell}A_{ij}A_{k\ell} 
        =
        \hat A^Tc \hat A,
\end{equation}
where $c$ on the right-hand side is from~\eqref{eq:Voigt-matrix}.
This proves the claim.
\end{proof}

\subsection{Characterization of algebraic and analytic admissibility}

We characterize an admissible stiffness tensor of Definition \ref{def:admissible} through the polynomials 
\begin{equation}\label{eq:P1}
P_1(c)
=
-c_{11}c_{23} + c_{12}c_{13} - c_{12}c_{23} + c_{13}c_{22}
\end{equation}
and
\begin{equation}\label{eq:P2}
P_2(c)
=
-c_{11}c_{22} + c_{11}c_{33} + c_{12}^2 + 2c_{12}c_{33} - c_{13}^2 - 2c_{13}c_{23} + c_{22}c_{33} - c_{23}^2
\end{equation}
in the space of stiffness tensors written in the Voigt notation~\eqref{eq:Voigt-matrix}.

\begin{proposition}[Algebraically admissible $c$]
\label{prop:alg-admissible}
The slowness surface of a positive definite stiffness tensor $c$ is a union of a concentric circle and ellipse if and only if $P_1(c)=P_2(c)=0$.
That is,
\begin{equation}
\label{eq:char-of-alg-adm}
\algebraicallyadmissibletensors
\cap\positivetensors
=
\V(P_1,P_2)
\cap\positivetensors
,
\end{equation}
where $\V(P_1,P_2)$ is the vanishing set of $P_1$ and $P_2$ and $\positivetensors$ is the set of all positive definite stiffness tensors.
\end{proposition}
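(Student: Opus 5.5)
The plan is to turn the geometric condition into a polynomial identity in $\xi$ involving one auxiliary unknown, the circle's radius parameter, and then eliminate that unknown. Write the Christoffel matrix in Voigt notation:
\begin{equation*}
\Gamma(\xi)=\begin{bmatrix} c_{11}\xi_1^2+2c_{13}\xi_1\xi_2+c_{33}\xi_2^2 & c_{13}\xi_1^2+(c_{12}+c_{33})\xi_1\xi_2+c_{23}\xi_2^2\\ c_{13}\xi_1^2+(c_{12}+c_{33})\xi_1\xi_2+c_{23}\xi_2^2 & c_{33}\xi_1^2+2c_{23}\xi_1\xi_2+c_{22}\xi_2^2\end{bmatrix}.
\end{equation*}
Then $\spoly{c}(\xi)=\det\Gamma(\xi)-\tr\Gamma(\xi)+1$. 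This has a homogeneous quartic part, a homogeneous quadratic part and the constant $1$.

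The slowness surface is a union of a circle $\{a\abs{\xi}^2=1\}$ and a centred ellipse $\{q(\xi)=1\}$ exactly when $\spoly{c}=(a\abs{\xi}^2-1)(q(\xi)-1)$. Comparing homogeneous parts gives $\tr\Gamma=a\abs{\xi}^2+q$ and $\det\Gamma=a\abs{\xi}^2q$. Eliminating $q$, this is equivalent to the existence of $a\in\R$ with
\begin{equation*}
D_a(\xi)\coloneqq\det\bigl(\Gamma(\xi)-a\abs{\xi}^2 I\bigr)\equiv 0 ,
\end{equation*}
that is, $a\abs{\xi}^2$ is an eigenvalue branch of $\Gamma(\xi)$.

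Conversely, suppose such an $a$ exists. Set $q\coloneqq\tr\Gamma-a\abs{\xi}^2$. By positivity (Lemma~\ref{lem:posdef}) $\Gamma(\xi)$ is positive definite for $\xi\neq0$, so both eigenvalues $a\abs{\xi}^2$ and $q(\xi)$ are positive. Hence $a>0$ and $q$ is a positive definite quadratic form, so we genuinely obtain a circle and a centred ellipse (possibly coinciding, which is harmless). Concentricity is automatic since everything is homogeneous in $\xi$.

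The algebraic core is then the following. Expand $D_a(\xi)$ in the monomials $\xi_1^4,\xi_1^3\xi_2,\dots,\xi_2^4$. This gives five polynomials $f_0,\dots,f_4\in\R[c_{11},\dots,c_{33},a]$, each of degree at most $2$ in $a$, with the $a^2$ terms coming only from $a^2\abs{\xi}^4$. I would compute the elimination ideal $(f_0,\dots,f_4)\cap\R[c]$ (by hand, or with a Gröbner basis in Magma as the paper does) and check that its radical is $(P_1,P_2)$. By hand, one can take the combinations $f_0-f_4$ and $f_1\pm f_3$, where the $a^2$ terms cancel. These are linear in $a$, and solving for $a$ and substituting should produce $P_1$ and $P_2$ up to nonvanishing factors. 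This gives the inclusion $\algebraicallyadmissibletensors\cap\positivetensors\subset\V(P_1,P_2)$.

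The main obstacle is the reverse inclusion. Elimination only says that a point of $\V(P_1,P_2)$ lies in the Zariski closure of the projection, and that the lifted $a$ might be complex. To handle this I would argue directly. The value $a$ must equal the eigenvalue of $\Gamma(e_1)$ (the $2\times2$ matrix with rows $(c_{11},c_{13})$ and $(c_{13},c_{33})$) that is shared with $\Gamma(e_2)$, which has rows $(c_{33},c_{23})$ and $(c_{23},c_{22})$. Both matrices are real symmetric, so their eigenvalues are real.

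So on $\V(P_1,P_2)\cap\positivetensors$, I would show that $P_1=P_2=0$ forces $\Gamma(e_1)$ and $\Gamma(e_2)$ to share an eigenvalue $a$. I would then verify that this real $a$ kills all $f_i$, using the explicit reduction of the $f_i$ modulo $(P_1,P_2)$ from the Gröbner computation. The degenerate loci where the linear-in-$a$ coefficients vanish (for instance the isotropic-type cases with $c_{13}=c_{23}=0$, where one can fall back on \eqref{eq:ellipticity_in_fixed_axes}-type identities) I would treat as separate cases. Alternatively, I would reduce to $c_{13}=c_{23}=0$ by a rotation, using the invariance of $P_1$ and $P_2$ under $\rotC{\theta}$ (Appendix~\ref{ap:rotrefl}).
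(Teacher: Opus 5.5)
Your reduction to the single unknown $a$ is correct and cleaner than the paper's $(r,k)$-parametrization. The forward inclusion follows from it directly: with $f_0,\dots,f_4$ the coefficients of $D_a$, one has the exact identities $P_1=\tfrac12(f_3-f_1)$ and $P_2=f_0+f_4-f_2$. So there is no need to solve for $a$ and substitute, which would force divisions by $c_{11}-c_{22}$ or $c_{13}+c_{23}$.

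The genuine gap is in the reverse inclusion as you set it up. Given $P_1=P_2=0$, the identities reduce $f_0=\dots=f_4=0$ to $f_0(a)=f_4(a)=f_3(a)=0$. A common eigenvalue of $\Gamma(e_1)$ and $\Gamma(e_2)$ need not satisfy the last equation. Take any non-isotropic admissible tensor rotated by $\pi/4$, e.g.\ the paper's example with $c_{11}=7$, $c_{22}=4$, $c_{33}=2$. The rotated tensor has $c_{11}=c_{22}$ and $c_{13}=c_{23}\neq0$, so $\Gamma(e_1)$ and $\Gamma(e_2)$ both have spectrum $\{11/2,2\}$. On the other hand $f_3(x)=4c_{13}(a-x)$ with $a=(c_{11}-c_{12})/2=2$, so the shared eigenvalue $11/2$ kills $f_0$ and $f_4$ but not $f_3$. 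Consequently:
\begin{itemize}
\item ``this real $a$ kills all $f_i$'' is false;
\item no reduction of the $f_i$ modulo $(P_1,P_2)$ can certify it, since the $f_i$ depend on $a$;
\item the locus $c_{11}=c_{22}$, where $f_0-f_4$ degenerates, is not an isotropic-type exception but contains every admissible tensor whose symmetry axes are the diagonals.
\end{itemize}
The rotation alternative has a different hole. It presupposes that, for every positive definite $c\in\V(P_1,P_2)$, some $\rotC{\theta}c$ has vanishing $(1,3)$ and $(2,3)$ entries. In the paper this comes from Lemmas~\ref{lem:fixed_axes} and~\ref{lem:rotated_tensors}, which assume algebraic admissibility. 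You would have to re-derive it from $P_1=P_2=0$ and positivity alone; after that, $\det(\Gamma(\xi)-c_{33}\abs{\xi}^2I)=-P_2\,\xi_1^2\xi_2^2$ finishes.

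The missing idea, which makes your own framework work with no case analysis, is that $f_0=a^2-(c_{11}+c_{33})a+c_{11}c_{33}-c_{13}^2$ is monic in $a$. By the Extension Theorem (equivalently, the projection $\V(f_0,\dots,f_4)\to\C^6$ is finite, hence closed), every point of $\V\bigl((f_0,\dots,f_4)\cap\C[c]\bigr)$ lifts to some $a\in\C$. There are no points at infinity, so your worry that elimination only yields the Zariski closure does not apply. Such an $a$ is an eigenvalue of the real symmetric matrix $\Gamma(e_1)$, hence real, and your positivity argument then gives $a>0$ and $q$ positive definite.

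The only remaining input is the elimination statement that this ideal has the same zero set as $(P_1,P_2)$. This is the same Gröbner computation the paper performs, since a factorization with $r\neq0$ in the paper is a solution of yours with $a=r^{-2}$. It also explains the paper's detour. The candidate spurious tensors the paper isolates by projective closure in $(k,r)$ are exactly those with $\det\Gamma\equiv0$. In your parametrization these are honest solutions with $a=0$, and positivity excludes them at once.
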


\begin{proof}
We give only a brief proof outline; a more detailed proof can be found in Appendix~\ref{app:long-proofs}.
The slowness polynomial can be spelled out explicitly as a polynomial in $(\xi_1,\xi_2)$.
If its vanishing set is to contain a circle and an ellipse, it must be divisible by polynomials vanishing on the circle and on the ellipse.
Given degree constraints, it has to be the product of these two.
We can parametrize the circle polynomial by a radius $r$ and the ellipse polynomial by coefficients $k$, multiply them, and compare with the slowness polynomial.
Setting all the coefficients to zero defines polynomial conditions in the variables $(c,r,k)$.

The ideal generated by these conditions defines a variety in the product space.
We project this variety to the $c$ component by elimination theory.
The elimination ideal turns out to be generated by $P_1(c)$ and $P_2(c)$.

The space of parameters $(r,k)$ being projected out is affine rather than projective, so we have to ensure that the variety $\V(P_1,P_2)$ does not contain spurious points projecting down from infinity.
It turns out that none of these possibly problematic stiffness tensors are positive definite.
\end{proof}

The proof hints that~\eqref{eq:char-of-alg-adm} does not hold without requiring positive definiteness.
Physical assumptions therefore simplify the algebra.

To describe stiffness tensors that are not only algebraically but also analytically admissible, we introduce more polynomials in $c$.
The key polynomials are
\begin{align}
\label{eq:Lnonzero}
R(c)&\coloneqq R_1(c) + R_2(c)
\quad\text{and}\\
Q(c)&\coloneqq R_1(c)R_2(c),\label{eq:Qnonzero}
\end{align}
defined using the auxiliary polynomials
\begin{align}
R_1(c)&\coloneqq (c_{33}+c_{12})^2-4c_{13}c_{23}, \label{eq:D1}
\quad\text{and}\\
R_2(c)&\coloneqq (c_{13}-c_{23})^2+(c_{11}-c_{33})(c_{22}-c_{33})\label{eq:D2}
.
\end{align}

\begin{proposition}[Analytically admissible $c$]
\label{prop:an-admissible}
The slowness surface of a positive definite stiffness tensor $c$ is a union of a concentric circle and ellipse that do not meet if and only if $P_1(c)=P_2(c)=0$, $R(c)\geq 0$ and $Q(c)\neq 0$.
That is,
\begin{equation}
\analyticallyadmissibletensors
=
\bigl(
\V(P_1,P_2)
\cap
\{R \geq 0\}
\setminus
\V(Q)
\bigr)\cap \positivetensors
.
\end{equation}
\end{proposition}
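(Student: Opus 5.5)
The plan is to reduce to a frame adapted to the ellipse, where the slowness polynomial factors explicitly and the condition ``circle and ellipse do not meet'' becomes a sign condition on a single quantity.

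\textbf{Step 1: invariance of the conditions.} Using the formulas for $\rotC{\theta}c$ and $\reflC{\theta}c$ in Appendix~\ref{ap:rotrefl}, I will check that the following are unchanged by rotations and reflections: $\V(P_1,P_2)$, the polynomial $R$, the vanishing set of $Q$, and positive definiteness (the last via Lemma~\ref{lem:posdef}). I do not expect $R_1$ and $R_2$ to be invariant separately, only in the combinations $R$ and $Q$ on $\V(P_1,P_2)$. If $R$ and $Q$ fail to be invariant outright, I will show their invariance modulo the ideal $(P_1,P_2)$. The property of the slowness surface being a circle and an ellipse that do not meet is clearly $O(2)$-invariant, since the surface is simply rotated or reflected.

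\textbf{Step 2: alignment.} Let $c\in\V(P_1,P_2)\cap\positivetensors$. By Proposition~\ref{prop:alg-admissible}, its slowness surface is a circle $\{r^2|\xi|^2=1\}$ together with a concentric ellipse. Choose $\theta$ so that $\rotC{\theta}c$ has its ellipse aligned with the coordinate axes. Then $\spoly{c}=(r^2|\xi|^2-1)(a\xi_1^2+b\xi_2^2-1)$ up to a constant, which is even in $\xi_1$ and in $\xi_2$ separately. Comparing with the explicit expansion of $\det(\Gamma-I)$:
\begin{itemize}
\item the $\xi_1\xi_2$ coefficient, which comes from $-\tr\Gamma$, forces $c_{13}+c_{23}=0$;
\item the $\xi_1^3\xi_2$ and $\xi_1\xi_2^3$ coefficients of $\det\Gamma$ then give two further polynomial relations.
\end{itemize}
Together with positive definiteness, these relations should force $c_{13}=c_{23}=0$. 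This is the step I expect to be the main obstacle. I will try to resolve it by solving the resulting small system by hand. If there is a residual case, for instance a circle--circle degeneracy, I will show that it violates positivity or is handled by a different choice of $\theta$.

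\textbf{Step 3: factorization in the aligned frame.} With $c_{13}=c_{23}=0$, we have $P_1=0$ automatically, and $P_2=0$ reads $(c_{12}+c_{33})^2=(c_{11}-c_{33})(c_{22}-c_{33})$. Hence $R_1=R_2=(c_{12}+c_{33})^2$, so $R=2R_1\ge 0$ holds automatically and $Q=(c_{12}+c_{33})^4$. Write
\[
\Gamma=\begin{bmatrix} c_{11}\xi_1^2+c_{33}\xi_2^2 & (c_{12}+c_{33})\xi_1\xi_2\\ (c_{12}+c_{33})\xi_1\xi_2 & c_{33}\xi_1^2+c_{22}\xi_2^2\end{bmatrix}.
\]
A direct check using $P_2=0$ shows that the eigenvalues of $\Gamma$ are $c_{33}|\xi|^2$ and $c_{11}\xi_1^2+c_{22}\xi_2^2$. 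Indeed, both the trace and the determinant match this pair. So the slowness surface is the circle $c_{33}|\xi|^2=1$ together with the ellipse $c_{11}\xi_1^2+c_{22}\xi_2^2=1$.

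\textbf{Step 4: the meeting condition.} The circle and the ellipse meet exactly when some $\xi\neq0$ satisfies $(c_{11}-c_{33})\xi_1^2+(c_{22}-c_{33})\xi_2^2=0$; rescaling such a $\xi$ puts it on the circle. This happens if and only if $(c_{11}-c_{33})(c_{22}-c_{33})\le 0$. That product equals $(c_{12}+c_{33})^2\ge0$, so the curves meet if and only if $c_{12}+c_{33}=0$, that is, if and only if $Q=0$.

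Combining this with the invariance from Step~1 and with Proposition~\ref{prop:alg-admissible} gives both inclusions of the claimed set equality. The condition $R\ge0$ is then automatically satisfied on the analytically admissible set, which is consistent with the statement.
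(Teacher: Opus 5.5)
Your proof is correct, and it takes a genuinely different route from the paper.

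\textbf{The paper's route.} The paper's proof is a one-line reduction. After Proposition~\ref{prop:alg-admissible}, the circle and ellipse meet exactly when the Christoffel matrix has a repeated eigenvalue at some real $\xi\neq0$. The conditions on $R$ and $Q$ are then imported from \cite[Prop.~26]{IKK25}. As printed, that sentence has the equivalence inverted: on $\V(P_1,P_2)\cap\positivetensors$ the two conditions characterize the \emph{absence} of a repeated real eigenvalue, as your Step~4 confirms.

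\textbf{Your route.} You instead align the ellipse, compute in that frame, and transfer back by invariance. The two steps you left open do close.
\begin{itemize}
\item In Step~2, with $c_{13}=-c_{23}=t$ the $\xi_1^3\xi_2$ and $\xi_1\xi_2^3$ coefficients become $-2t(c_{11}+c_{12})$ and $2t(c_{12}+c_{22})$. So $t\neq0$ would force $c_{11}=c_{22}=-c_{12}$, hence $c_{11}c_{22}-c_{12}^2=0$, which positive definiteness forbids.
\item In Step~1, $4R_2$ and $4R_1$ are the discriminants of the binary quadratics $\Gamma_{11}-\Gamma_{22}$ and $2\Gamma_{12}$. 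Rotating $c$ by $\theta$ mixes these two by a rotation by $2\theta$, after the discriminant-preserving substitution $\xi\mapsto\rotA{\theta}^T\xi$. Hence $R$ is exactly invariant; concretely, $\frac{d}{d\theta}R_1(\rotC{\theta}c)|_{\theta=0}=4P_1(c)=-\frac{d}{d\theta}R_2(\rotC{\theta}c)|_{\theta=0}$.
\item Moreover $R_1-R_2=P_2$ identically. So on $\V(P_1,P_2)$ one has $Q=R^2/4$, and no separate invariance statement for $Q$ is needed.
\end{itemize}

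\textbf{What each buys.} Your route gives a self-contained argument beyond Proposition~\ref{prop:alg-admissible}. It shows that $R\geq0$ is automatic on $\V(P_1,P_2)\cap\positivetensors$, so the condition is really just $R>0$. It also establishes directly the aligned-frame facts of Lemmas~\ref{lma:special-characterization-analytic} and~\ref{lem:rotated_tensors}, which the paper later derives from this very proposition and from the resultant computation in Lemma~\ref{lem:fixed_axes}. The paper's route buys brevity. The cost is that the repeated-eigenvalue analysis is outsourced to an external result, and matching that result to the present $R$ and $Q$ is left to the reader.
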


\begin{proof}
    Proposition \ref{prop:alg-admissible} characterizes positive definite algebraically admissible stiffness tensors through the vanishing set of the polynomials $P_1$ and $P_2$.
    The circle and ellipse meet if there is a $\xi_1 \in \mathbb{R}^2 \setminus \{0\}$ such that
    \begin{equation}
        \lambda_1(x,\xi_1)=1 \quad \text{and} \quad \lambda_2(x,\xi_1)=1,
    \end{equation}
    where $\lambda_1$ and $\lambda_2$ are the eigenvalues of the Christoffel matrix $\Gamma$. From \cite[Prop. 26]{IKK25} it follows that $\Gamma$ can have a multiple eigenvalue if and only if $R(c)\geq 0$ and $Q(c) \neq 0$. 
\end{proof}

Additionally, we observe the following special properties of the variety $\V(P_1,P_2)$:
\begin{lemma}\label{lem:SingLoc}
The variety $S=\V(P_1,P_2) \subset \mathbb{C}^6$ satisfies the following:
\begin{enumerate}
    \item The singular locus of $S$ is given by the vanishing of the ideal generated by $P_1$, $P_2$, and the $2\times 2$ minors of the Jacobian matrix of $(P_1,P_2)$.
    \item No real positive definite stiffness tensor lies in the singular locus.
    \item The set of isotropic stiffness tensors is contained in $S$.
\end{enumerate}
\end{lemma}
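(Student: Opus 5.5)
The plan is to handle the three parts in the order (3), (1), (2), since (3) is a direct substitution, (1) is structural commutative algebra, and (2) needs an explicit decomposition.

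\emph{Part (3).} I would verify it directly. An isotropic stiffness tensor has, in Voigt notation~\eqref{eq:Voigt-matrix}, the entries $c_{11}=c_{22}=\lambda+2\mu$, $c_{12}=\lambda$, $c_{33}=\mu$ and $c_{13}=c_{23}=0$. Substituting into~\eqref{eq:P1}, every monomial of $P_1$ contains $c_{13}$ or $c_{23}$, so $P_1=0$. Substituting into~\eqref{eq:P2} gives
\begin{equation*}
P_2=-(\lambda+2\mu)^2+2(\lambda+2\mu)\mu+\lambda^2+2\lambda\mu ,
\end{equation*}
which expands to $0$. This holds for all complex $\lambda,\mu$, so the isotropic tensors lie in $S$.

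\emph{Part (1).} This is the Jacobian criterion, and it requires two algebraic facts about $I=(P_1,P_2)\subset\C[c_{11},\dots,c_{33}]$.
\begin{enumerate}
\item $S$ has pure codimension $2$, i.e.\ $P_1,P_2$ form a regular sequence. It suffices that $P_1$ and $P_2$ share no common factor. $P_1$ is irreducible, or at least coprime to $P_2$, because $P_2$ contains the monomial $c_{11}c_{33}$ while no monomial of $P_1$ involves $c_{33}$.
\item $I$ is radical, so that $I$ is the full ideal of $S$.
\end{enumerate}
For the second fact I would first check it by computer (Magma's \texttt{Radical} or primary decomposition, in the same spirit as the proof of Proposition~\ref{prop:alg-admissible}). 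Alternatively, it follows by hand: a complete intersection is Cohen--Macaulay, so by Serre's criterion it is reduced once it is generically reduced. Generic reducedness holds as soon as each irreducible component contains a point where the Jacobian has rank $2$. For the component containing the isotropic tensors from (3), one such point can be exhibited explicitly. With both facts in hand, the singular locus of an equidimensional complete intersection of codimension $2$ is cut out by $I$ together with the $2\times2$ minors of the $2\times6$ Jacobian of $(P_1,P_2)$, which is the statement.

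\emph{Part (2).} I would compute a primary decomposition of $J=I+(\text{$2\times2$ minors})$ and read off each component. A natural first case split is whether the gradient of $P_1$, namely
\begin{equation*}
(-c_{23},\; c_{13}-c_{23},\; c_{12}+c_{22},\; -c_{11}-c_{12},\; 0)
\end{equation*}
in the coordinates $(c_{11},c_{12},c_{13},c_{22},c_{23},c_{33})$, vanishes or is parallel to the gradient of $P_2$. If $\nabla P_1=0$, then $c_{13}=c_{23}=0$ and $c_{12}=-c_{11}=-c_{22}$. In that case $c_{11}c_{22}-c_{12}^2=0$, so a $2\times2$ principal minor of the Voigt matrix vanishes, and by Lemma~\ref{lem:posdef} $c$ is not positive definite. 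In the parallel case, the $c_{33}$-entry of $\nabla P_1$ is $0$, so $\partial_{c_{33}}P_2=c_{11}+2c_{12}+c_{22}$ must vanish. Together with the remaining minors and $P_2=0$, I expect this to force a relation such as $(c_{12}+c_{33})^2=0$, or $c_{11}+2c_{12}+c_{22}=0$ combined with another vanishing minor.

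Each resulting component should then be shown incompatible with positive definiteness. The key observation is that $c_{11}+2c_{12}+c_{22}=\hat A^{T}c\hat A$ for $\hat A=(1,1,0)$, so this quantity is strictly positive for every positive definite $c$. If some component does not contain this factor, I would instead use a principal minor of the Voigt matrix, or a Positivstellensatz-type certificate.

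The main obstacle is this last step: obtaining a clean list of components of $J$ and a positivity certificate for each. I would rely on Magma for the decomposition and then try the linear form $c_{11}+2c_{12}+c_{22}$ first as the certificate.
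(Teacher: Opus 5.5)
Your proposal is correct, and for part~(2) it takes a different, computer-free route. The paper computes in Magma the radical of the ideal generated by $P_1$, $P_2$ and the minors. It obtains $c_{13}+c_{23}=c_{12}+c_{22}=c_{11}-c_{22}=0$ and observes that the Voigt matrix then has row $1=-$row $2$.

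You instead split on how $DP$ drops rank, and your argument is already finished as written; the ``main obstacle'' you anticipate does not exist. In the parallel case, the vanishing $c_{33}$-entry of $\nabla P_1$ forces $\partial_{c_{33}}P_2=c_{11}+2c_{12}+c_{22}=\hat A^{T}c\hat A=0$ with $\hat A=(1,1,0)$. That contradicts positive definiteness immediately. You need neither $P_1=P_2=0$, nor further minors, nor a primary decomposition. Of the relations you guess, it is $c_{11}+2c_{12}+c_{22}=0$ that holds, not $(c_{12}+c_{33})^2=0$. The same linear form also vanishes in your case $\nabla P_1=0$, since $c_{11}+c_{12}=c_{12}+c_{22}=0$ there. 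So the single functional $c\mapsto c_{11}+2c_{12}+c_{22}$ certifies both cases.

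What your route buys is the stronger fact that $DP$ has rank $2$ at \emph{every} positive definite tensor, whether or not it lies on $S$. By the implicit function theorem this alone gives the smoothness of $S$ near positive definite points used in Step~(1) of Lemma~\ref{lem:smoothMap}. It does so independently of the radicality and equidimensionality that part~(1) requires.

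What the paper's route buys is the exact singular locus, but you can recover that by hand too. Write $P_2=(c_{11}+2c_{12}+c_{22})c_{33}+c_{12}^2-c_{11}c_{22}-(c_{13}+c_{23})^2$. On the hyperplane $c_{11}+2c_{12}+c_{22}=0$ this gives $P_1=(c_{12}+c_{22})(c_{13}+c_{23})$ and $P_2=(c_{12}+c_{22})^2-(c_{13}+c_{23})^2$. These force exactly the paper's three linear equations, and on that set both gradients are multiples of $(1,2,0,1,0,0)$.

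For part~(1), you make explicit the hypotheses the paper's appeal to ``standard results'' tacitly uses: a complete intersection with $(P_1,P_2)$ radical. The paper's computation in Listing~\ref{lst:magma2} supplies the radicality.

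Some small repairs:
\begin{enumerate}
\item Your displayed $\nabla P_1$ has only five entries; it omits $\partial_{c_{22}}P_1=c_{13}$. It should read $(-c_{23},\,c_{13}-c_{23},\,c_{12}+c_{22},\,c_{13},\,-c_{11}-c_{12},\,0)$, though the conclusions you draw from it are right.
\item The observation that $P_2$ involves $c_{33}$ while $P_1$ does not proves coprimality only once $P_1$ is known to be irreducible. That holds, because $P_1=c_{13}(c_{12}+c_{22})-c_{23}(c_{11}+c_{12})$ is a quadratic form of rank~$4$.
\item Serre's criterion needs every component, not just the one containing the isotropic tensors. In fact $S$ is irreducible: off the hyperplane $c_{11}+2c_{12}+c_{22}=0$ it is a graph $c_{33}=\dots$ over an open subset of the irreducible quadric $\V(P_1)\subset\C^5$, and on that hyperplane it is only $3$-dimensional.
\end{enumerate}
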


\begin{proof}
The description of the singular locus follows from standard results in algebraic geometry: a point of $S$ is singular if and only if the Jacobian matrix of the defining polynomials drops rank. Since $S$ is defined by two equations, this occurs precisely when all $2\times 2$ minors of the Jacobian matrix $J=\begin{bmatrix}
    \frac{\partial P_1}{\partial c_{11}} & \dots & \frac{\partial P_1}{\partial c_{33}}\\
    \frac{\partial P_2}{\partial c_{11}} & \dots & \frac{\partial P_2}{\partial c_{33}}\\
\end{bmatrix}$ vanish. Thus the singular locus is defined by the ideal generated by $P_1$, $P_2$, and these minors. A minimal generating set for this ideal can be computed using magma (see Listing~\ref{lst:magma3}):
\begin{equation}
    c_{13}+c_{23}, \quad 
    c_{12}+c_{22}, \quad 
    c_{11}-c_{22}.
\end{equation}
A stiffness tensor $c$ satisfying these conditions is singular, proving the second claim. 

Finally, the inclusion of isotropic stiffness tensors is verified by direct substitution. The isotropic conditions
\begin{equation}
c_{13} = 0, \quad c_{23} = 0, \quad c_{22} = c_{11}, \quad 2c_{33} = c_{11} - c_{12}
\end{equation}
define an ideal whose vanishing set is contained in $\V(P_1,P_2)$. This can be checked by verifying that $P_1$ and $P_2$ lie in the ideal generated by these relations; see Listing~\ref{lst:magma3}.
\end{proof}

\subsection{Slowness surfaces that are the union of two circles}
Similarly to characterizing algebraically admissible stiffness tensors in Proposition \ref{prop:alg-admissible} one can characterize stiffness tensors that give rise to a slowness surface that is the union of two concentric circles in Proposition \ref{prop:two-circles}. Here we focus on not necessarily positive definite stiffness tensors, as these play a role in the proof of Proposition \ref{prop:components+psismooth} and Theorem \ref{thm:how-many-stiffness-tensor-fields}. The most well-known examples of stiffness tensors with this property are isotropic stiffness tensors. Throughout this paper we consider the following definition of an isotropic stiffness tensor $c$:
\begin{definition}[Isotropic stiffness tensors]
    A stiffness tensor $c$ is isotropic if and only if $c$ is parameterized by Lamé parameters $\lambda$ and $\mu$:
        \begin{equation}\label{eq:iso_tenor}
            c_{ijk\ell}= \lambda \delta_{ij}\delta_{k\ell}
                +
            \mu(\delta_{ik}\delta_{j\ell}+\delta_{i\ell}\delta_{jk})
        \end{equation}

\end{definition}
It is shown in Lemma \ref{lem:fixed_axes} that the above definition is equivalent to every line being a symmetry axis of $c$. In light of this definition we formulate the following proposition:

\begin{proposition}
\label{prop:two-circles}
A stiffness tensor $c$, not necessarily positive definite, gives rise to a slowness surface that is the union of two concentric circles with radii $r_1$ and $r_2$ if and only if $c$ is parameterized by \begin{align}\label{eq:isoslow1}
    c&= \begin{bmatrix}
        c_{11} & c_{11}-2c_{33} & 0\\
        c_{11}-2c_{33} & c_{11} & 0\\
        0 & 0 & c_{33}
    \end{bmatrix}\\
\intertext{or}
c&= \begin{bmatrix}
        c_{11} & -c_{11} & c_{13}\\
        -c_{11} & c_{11} & -c_{13}\\
        c_{13} & -c_{13} & c_{33}
    \end{bmatrix}.\label{eq:isoslow2}
\end{align}

Additionally, we have the following properties:
\begin{itemize}
    \item The stiffness tensors in~\eqref{eq:isoslow1} and~\eqref{eq:isoslow2} give rise to the same slowness surface (same two circles) if and only if their values of $c_{11}+c_{33}$ and $c_{11}c_{33}-c_{13}^2$ agree
    \item The radii of the two circles coincide if and only if $c$ is parameterized by
    \begin{equation}\label{eq:isocoincide}
        c=\begin{bmatrix}
            c_{11} & -c_{11} & 0\\
            -c_{11} & c_{11} & 0\\
            0 & 0 & c_{11}
        \end{bmatrix}
    \end{equation}
    \item The stiffness tensors of the form~\eqref{eq:isoslow1} are precisely the isotropic
    ones, and no stiffness tensor of the form~\eqref{eq:isoslow2} is positive definite. In
    particular, a positive definite stiffness tensor whose slowness surface is the union of
    two concentric circles is isotropic.
\end{itemize}

\end{proposition}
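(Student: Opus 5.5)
The approach is a direct coefficient comparison. At a fixed point write the Christoffel matrix in Voigt notation:
\begin{align}
\Gamma_{11}&=c_{11}\xi_1^2+2c_{13}\xi_1\xi_2+c_{33}\xi_2^2,\\
\Gamma_{22}&=c_{33}\xi_1^2+2c_{23}\xi_1\xi_2+c_{22}\xi_2^2,\\
\Gamma_{12}&=c_{13}\xi_1^2+(c_{12}+c_{33})\xi_1\xi_2+c_{23}\xi_2^2.
\end{align}
Then $\spoly{c}(\xi)=\det\Gamma-\tr\Gamma+1$, where $\det\Gamma$ is a quartic form and $\tr\Gamma$ a quadratic form in $\xi$. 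The slowness surface is the union of two concentric circles of radii $r_1,r_2$ exactly when $\spoly{c}=(a\abs{\xi}^2-1)(b\abs{\xi}^2-1)$ with $a=r_1^{-2}$ and $b=r_2^{-2}$. As in the outline of Proposition~\ref{prop:alg-admissible}, divisibility and degree considerations force the slowness polynomial to be this product. Comparing homogeneous parts, the condition is equivalent to
\begin{equation}
\tr\Gamma=(a+b)\abs{\xi}^2 \quad\text{and}\quad \det\Gamma=ab\abs{\xi}^4 .
\end{equation}

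\textbf{Trace condition.} It reads $c_{11}+c_{33}=c_{22}+c_{33}=a+b$ and $c_{13}+c_{23}=0$. So $c_{22}=c_{11}$ and $c_{23}=-c_{13}$. Put $t=c_{13}$ and $s=c_{12}+c_{33}$.

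\textbf{Determinant condition.} Expand $\Gamma_{11}\Gamma_{22}-\Gamma_{12}^2$. I expect:
\begin{itemize}
\item the $\xi_1^4$ and $\xi_2^4$ coefficients both equal $c_{11}c_{33}-t^2$, which must be $ab$;
\item the $\xi_1^3\xi_2$ coefficient is $2t(c_{33}-c_{11}-s)$, and the $\xi_1\xi_2^3$ coefficient is its negative;
\item the $\xi_1^2\xi_2^2$ coefficient is $c_{11}^2+c_{33}^2-2t^2-s^2$, which must equal $2(c_{11}c_{33}-t^2)$, i.e.\ $s^2=(c_{11}-c_{33})^2$.
\end{itemize}

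\textbf{Case analysis.} If $s=-(c_{11}-c_{33})$, then $c_{12}=-c_{11}$, the odd coefficients vanish automatically, and $t$ is free. This is the family~\eqref{eq:isoslow2}. If $s=c_{11}-c_{33}$, then $c_{12}=c_{11}-2c_{33}$, and the odd coefficient becomes $-4t(c_{11}-c_{33})$. Either $t=0$, giving~\eqref{eq:isoslow1}, or $c_{11}=c_{33}$, which makes $s=0$ and places us back in~\eqref{eq:isoslow2}. Conversely, these forms satisfy the identities.

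The values $a,b$ are the roots of $z^2-(c_{11}+c_{33})z+(c_{11}c_{33}-t^2)$. Their discriminant $(c_{11}-c_{33})^2+4t^2$ is nonnegative, so they are real; one should note that actual circles require $a,b>0$. This also settles the two remaining claims:
\begin{itemize}
\item Two tensors give the same pair of circles if and only if $c_{11}+c_{33}$ and $c_{11}c_{33}-c_{13}^2$ agree.
\item The radii coincide if and only if the discriminant vanishes, i.e.\ $c_{11}=c_{33}$ and $t=0$. In either family this gives $c_{12}=-c_{11}$, which is~\eqref{eq:isocoincide}.
\end{itemize}

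\textbf{Last bullet.} Substituting the Lamé form~\eqref{eq:iso_tenor} gives $c_{11}=c_{22}=\lambda+2\mu$, $c_{12}=\lambda$, $c_{33}=\mu$ and $c_{13}=c_{23}=0$. This is exactly~\eqref{eq:isoslow1}, with $\mu=c_{33}$ and $\lambda=c_{11}-2c_{33}$, and the correspondence is bijective. For~\eqref{eq:isoslow2}, the Voigt vector $(1,1,0)$ lies in the kernel, so by Lemma~\ref{lem:posdef} no such tensor is positive definite.

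\textbf{Main obstacle.} The only error-prone step is expanding the quartic $\det\Gamma$ correctly, especially the $\xi_1^2\xi_2^2$ coefficient. I would verify it by a short Magma computation, as done elsewhere in the paper.
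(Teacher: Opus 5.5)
Your proposal is correct. The expansion you flagged as the main obstacle also checks out: with $c_{22}=c_{11}$ and $c_{23}=-t$, the coefficients of $\det\Gamma$ are exactly as you predict and agree with \eqref{eq:slownessgeneral}, so no computer verification is needed.

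The logical skeleton is the same as in the paper. Both reduce the statement to the polynomial identity $\spoly{c}=(a\abs{\xi}^2-1)(b\abs{\xi}^2-1)$ and solve the resulting coefficient equations. The execution differs:
\begin{itemize}
\item The paper feeds all coefficient equations into Magma, eliminates the radii with a Gr\"obner basis, obtains the generators $c_{13}+c_{23}$, $c_{11}-c_{22}$, $c_{23}(c_{12}+c_{22})$ and $(c_{12}+c_{22})(c_{12}-c_{22}+2c_{33})$, and reads off the two families with \texttt{IrreducibleComponents}.
\item You split the identity into its homogeneous parts, trace and determinant, and so do the elimination by hand. After substituting $c_{22}=c_{11}$ and $c_{23}=-c_{13}$, your conditions $t(c_{11}+c_{12})=0$ and $s^2=(c_{11}-c_{33})^2$, i.e.\ $(c_{12}+c_{11})(c_{12}-c_{11}+2c_{33})=0$, are exactly the paper's last two generators.
\end{itemize}

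What your route buys: it is self-contained and checkable by hand. It also shows why exactly two families appear, namely the sign choice $s=\pm(c_{11}-c_{33})$. And it shows why $a,b$ are the roots of $z^2-(c_{11}+c_{33})z+(c_{11}c_{33}-c_{13}^2)$: their sum and product are read off directly from the trace and determinant. This is the same relation the paper extracts from its Gr\"obner basis.

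What the paper's route buys: it is mechanical and matches its other computer-assisted characterizations, such as Proposition~\ref{prop:alg-admissible}. There the analogous hand elimination, with a general conic in place of a second circle, would be much messier.

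Your kernel vector $(1,1,0)$ is the same observation as the paper's row relation (row $1$) $=-$(row $2$). Your caveat that genuine real circles require $a,b>0$ is a point the paper's statement passes over for non-positive-definite $c$. In both treatments, ``union of two circles'' really means the factorization of $\spoly{c}$.
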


This proposition has the following direct implications for isotropic stiffness tensors

\begin{remark}\label{rem:Isoparam}
Proposition~\ref{prop:two-circles} implies that the following four stiffness tensors give rise to the same slowness surface (as their values of $c_{11}+c_{33}$ and $c_{11}c_{33}-c_{13}^2$ agree):
\begin{itemize}
    \item The isotropic stiffness tensor in~\eqref{eq:iso_tenor}:
    \begin{equation}\label{eq:isoparam}
        c=\begin{bmatrix}
        \lambda+2\mu & \lambda & 0\\
        \lambda & \lambda+2\mu & 0\\
        0 & 0 & \mu
    \end{bmatrix},
    \end{equation}
    corresponding to~\eqref{eq:isoslow1} with $c_{11}=\lambda+2\mu$ and $c_{33}=\mu$
    \item the non-isotropic stiffness tensor:
    \begin{equation}\label{eq:circlesemi}
        c=\begin{bmatrix}
        \lambda+2\mu & -\lambda-2\mu & 0\\
        -\lambda-2\mu & \lambda+2\mu & 0\\
        0 & 0 & \mu
        \end{bmatrix},
    \end{equation}
    corresponding to~\eqref{eq:isoslow2} with $c_{11}=\lambda+2\mu, c_{33}=\mu$ and $c_{13}=0$
    \item  the isotropic stiffness tensor in~\eqref{eq:iso_tenor}
        \begin{equation}\label{eq:isoparamflipped}
        c=\begin{bmatrix}
        \mu & -2\lambda-3\mu & 0\\
        -2\lambda-3\mu & \mu & 0\\
        0 & 0 & \lambda+2\mu
    \end{bmatrix},
    \end{equation}
    corresponding to~\eqref{eq:isoslow1} with $c_{11}=\mu$ and $c_{33}=\lambda+2\mu$
    \item the non-isotropic stiffness tensor:
    \begin{equation}\label{eq:circlesemi2}
        c=\begin{bmatrix}
        \mu & -\mu & 0\\
        -\mu & \mu & 0\\
        0 & 0 & \lambda+2\mu
    \end{bmatrix},
    \end{equation}
    corresponding to~\eqref{eq:isoslow2} with $c_{11}=\mu, c_{33}=\lambda+2\mu$ and $c_{13}=0$
\end{itemize}
With $\mu,\lambda+\mu>0$ the isotropic stiffness tensor in~\eqref{eq:isoparam} is positive definite, the isotropic stiffness tensor in~\eqref{eq:isoparamflipped} is indefinite and the non-isotropic stiffness tensors in~\eqref{eq:circlesemi} and~\eqref{eq:circlesemi2} are positive semi definite. 
\end{remark}

\begin{proof}[Proof of Proposition~\ref{prop:two-circles}]
We give only a brief proof outline; the structure of the proof is very similar to the proof of Proposition \ref{prop:alg-admissible} and a more detailed version can be found in Appendix \ref{app:long-proofs}. The slowness polynomial is spelled out as a polynomial in $(\xi_1,\xi_2)$. If its vanishing set is to contain two circles, it must be divisible by polynomials vanishing on the two circles. Given degree constraints, it has to be the product of these two polynomials. We can parameterize the circle polynomials by radii $r_1$ and $r_2$, multiply them, and compare with the slowness polynomial. Setting all the coefficients to zero defines polynomial conditions in the variables $(c,r_1,r_2)$. 
The ideal generated by these conditions defines a variety in the product space. We project this variety to the $c$ component by elimination theory. The elimination ideal yields the following necessary and sufficient conditions on $c$:
\begin{equation}
    c_{13}+c_{23}=0,\quad c_{11}-c_{22}=0,\quad c_{12}c_{23}+c_{22}c_{23}=0,\quad c_{12}^2-c_{22}^2+2c_{12}c_{33}+2c_{22}c_{33}=0.
\end{equation}
The last two factor as two irreducible components according to whether $c_{12}+c_{22}=0$ or not. Each of these components exactly give the stiffness tensors of the form~\eqref{eq:isoslow1} or~\eqref{eq:isoslow2}.

Additionally, the Gröbner basis in all variables contains the following relations for the radii $r_1$ and $r_2$:
\begin{equation}\label{eq:radiieq}
    r_1 + r_2 = c_{11}+c_{33}, \qquad r_2^2 - (c_{11}+c_{33})r_2 + (c_{11}c_{33}-c_{13}^2) = 0.
\end{equation}
From Vieta's formula it follows that both $r_2$ and $r_1$ are roots in the above second order polynomial and that they satisfy $r_1 r_2 = c_{11}c_{33}-c_{13}^2$. This implies that tensors on the form~\eqref{eq:isoslow1} and one of the form~\eqref{eq:isoslow2} give rise to the same slowness surface if and only if their values of $c_{11}+c_{33}$ and $c_{11}c_{33}-c_{13}^2$ agree. The two roots $r_1$ and $r_2$ coincide if and only if the discriminant of the second order polynomial in~\eqref{eq:radiieq} vanishes:
\begin{equation}
    (c_{11}+c_{33})^2 - 4(c_{11}c_{33}-c_{13}^2) = (c_{11}-c_{33})^2 + 4c_{13}^2 = 0.
\end{equation}
Over $\mathbb{R}$, this holds if and only if $c_{11}=c_{33}$ and $c_{13}=0$, which yields the parameterization in~\eqref{eq:isocoincide} for which the two circles to coincide.
Finally,~\eqref{eq:isoslow1} is the Voigt form of~\eqref{eq:iso_tenor} which is evident from Remark~\ref{rem:Isoparam}, while the first two Voigt rows of
\eqref{eq:isoslow2} satisfy (row $1$) $=-$(row $2$), so that such a tensor is singular.

\end{proof}

The following Lemma gives an explicit characterization of stiffness tensors that give rise to the same slowness surface as $c$ in~\eqref{eq:isoslow1} in terms of rotations of a stiffness tensor on the form~\eqref{eq:isoslow2}. Rotation actions on a stiffness tensor are defined in~\eqref{eq:rotC}.
\begin{lemma}
\label{lem:rotation-orbit}
Define:
\begin{equation}
c=\begin{bmatrix}
        c_{11} & c_{11}-2c_{33} & 0\\
        c_{11}-2c_{33} & c_{11} & 0\\
        0 & 0 & c_{33}
    \end{bmatrix}
\qquad \text{and} \qquad
    \tilde c \coloneqq \begin{bmatrix} c_{11} & -c_{11} & 0\\ -c_{11} & c_{11} & 0\\ 0 & 0 & c_{33}\end{bmatrix},
\end{equation}
where $c$ is a stiffness tensor of the form~\eqref{eq:isoslow1} and $\tilde{c}$ is a stiffness tensor of the form~\eqref{eq:isoslow2} with $c_{13}=0$. These satisfy the following:
\begin{itemize}
    \item $c$ and $\tilde c$ give rise to the same slowness surface
    \item A stiffness tensor $c'$ of the form~\eqref{eq:isoslow2} gives rise to the same slowness surface as $c$ if and only if $c'=\rotC{\theta}\tilde{c}$ for some $\theta \in [0,2\pi)$.
\end{itemize}
\end{lemma}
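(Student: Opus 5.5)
The first bullet follows directly from the first property in Proposition~\ref{prop:two-circles}. Indeed, $c$ is of the form~\eqref{eq:isoslow1} and $\tilde c$ is of the form~\eqref{eq:isoslow2} with $c_{13}=0$. Both have the same value of $c_{11}+c_{33}$, and the same value $c_{11}c_{33}-c_{13}^2=c_{11}c_{33}$. Hence, by the Vieta relations~\eqref{eq:radiieq}, they produce the same pair of radii and thus the same two circles.

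For the second bullet, I would first compute $\rotC{\theta}\tilde c$ explicitly using the formulas in Appendix~\ref{ap:rotrefl}. A cleaner route is to note that $\tilde c$ corresponds, via Lemma~\ref{lem:posdef}, to the quadratic form $\hat A^T\tilde c\hat A = c_{11}(A_{11}-A_{22})^2+4c_{33}A_{12}^2$. This depends only on the traceless part of $A$. Write that traceless part as $(a,b)=(A_{11}-A_{22},2A_{12})$. Rotation of $A$ by $\theta$ rotates $(a,b)$ by $2\theta$. The form becomes $c_{11}a^2+c_{33}b^2$ with $a$ and $b$ rotated by $2\theta$. Expanding gives entries $c'_{11}=c'_{22}=-c'_{12}=c_{11}\cos^2 2\theta+c_{33}\sin^2 2\theta$, $c'_{33}=c_{11}\sin^2 2\theta+c_{33}\cos^2 2\theta$, and $c'_{13}=-c'_{23}=\pm(c_{11}-c_{33})\sin 2\theta\cos 2\theta$, up to a sign convention I would fix. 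In particular $\rotC{\theta}\tilde c$ is again of the form~\eqref{eq:isoslow2}. Rotation invariance of the slowness surface, which holds because the surface rotates with the tensor and circles are rotation invariant, then shows that $\rotC\theta\tilde c$ gives the same surface as $c$. This proves the "if" direction.

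For the converse, let $c'$ be of the form~\eqref{eq:isoslow2} with parameters $(c'_{11},c'_{13},c'_{33})$ giving the same surface as $c$. By Proposition~\ref{prop:two-circles}, $c'_{11}+c'_{33}=c_{11}+c_{33}$ and $c'_{11}c'_{33}-c'^2_{13}=c_{11}c_{33}$. Equivalently, the symmetric $2\times2$ matrix $M'=\begin{bmatrix}c'_{11}&c'_{13}\\c'_{13}&c'_{33}\end{bmatrix}$ has the same trace and determinant as $\operatorname{diag}(c_{11},c_{33})$. So $M'$ has the same eigenvalues and is orthogonally diagonalizable, $M'=R_\alpha\operatorname{diag}(c_{11},c_{33})R_\alpha^T$ for some rotation $R_\alpha$. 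The computation above shows that $\rotC\theta\tilde c$ corresponds exactly to conjugating $\operatorname{diag}(c_{11},c_{33})$ by the rotation by $2\theta$, possibly with a sign flip handled by choosing $\theta\mapsto-\theta$. Taking $\theta=\alpha/2$ therefore gives $c'=\rotC\theta\tilde c$ with $\theta\in[0,2\pi)$.

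The main obstacle is bookkeeping: getting the signs and Voigt factors of $c'_{13}$ right, so that the identification of $\rotC\theta\tilde c$ with conjugation of $M$ by the rotation by $2\theta$ is exact. I would verify this against the appendix formulas at $\theta=\pi/4$.
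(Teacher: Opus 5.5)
Your proposal is correct and is essentially the paper's proof. The paper also encodes a tensor of the form~\eqref{eq:isoslow2} by the block $\begin{pmatrix} c_{11} & c_{13}\\ c_{13} & c_{33}\end{pmatrix}$, shows that $\rotC{\theta}$ acts on it by conjugation with $\rotA{2\theta}$, and concludes via equal trace and determinant plus the spectral theorem; your quadratic-form derivation via the traceless part of $A$ reproduces the paper's entries (with the $+$ sign for $c'_{13}$) and is a cleaner substitute for specializing the appendix formulas, but you should state explicitly that the diagonalizing orthogonal matrix can be chosen in $SO(2)$ (flip one eigenvector, or note as the paper does that $DB_0D^T=B_0$ for $D=\operatorname{diag}(1,-1)$).
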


\begin{proof}
We give only a brief proof outline; a more detailed proof can be found in Appendix~\ref{app:long-proofs}.
Assign to a stiffness tensor of the form~\eqref{eq:isoslow1} or~\eqref{eq:isoslow2} the
symmetric \emph{block}
\begin{equation}
    B \coloneqq \begin{pmatrix} c_{11} & c_{13}\\ c_{13} & c_{33}\end{pmatrix},
\end{equation}
formed from its $(1,1)$, $(1,3)$ and $(3,3)$ Voigt entries, the off-diagonal entry being the
$(1,3)$-entry $+c_{13}$ and not the $(2,3)$-entry $-c_{13}$. By
Proposition~\ref{prop:two-circles} two such tensors share a slowness surface if and only if
their blocks have the same trace and determinant. Both $c$ and $\tilde c$ have block
$B_0=\operatorname{diag}(c_{11},c_{33})$, which proves the first assertion; write $B'$ for
the block of $c'$.

Specialising~\eqref{eq:rotCexplicit} to~\eqref{eq:isoslow2} with $c_{13}=0$ shows that
$\rotC{\theta}\tilde c$ is again of the form~\eqref{eq:isoslow2}, with
\begin{align}
    (\rotC{\theta}\tilde c)_{11}
    &= \tfrac{1}{2}(c_{11}+c_{33})+\tfrac{1}{2}(c_{11}-c_{33})\cos(4\theta),\\
    (\rotC{\theta}\tilde c)_{33}
    &= \tfrac{1}{2}(c_{11}+c_{33})-\tfrac{1}{2}(c_{11}-c_{33})\cos(4\theta),\\
    (\rotC{\theta}\tilde c)_{13}
    &= \tfrac{1}{2}(c_{11}-c_{33})\sin(4\theta),
\end{align}
that is, with the block
\begin{equation}\label{eq:Bconj1}
    B^{\theta} = \rotA{2\theta}\,B_0\,\rotA{2\theta}^{T}.
\end{equation}
In particular, if $c'=\rotC{\theta}\tilde c$ then $B'=B^{\theta}$ is an orthogonal conjugate
of $B_0$ and hence has the same trace and determinant, so $c'$ and $c$ share a slowness
surface.

Conversely, let $c'$ be of the form~\eqref{eq:isoslow2} and share a slowness surface with
$c$. Then $B_0$ and $B'$ have equal trace and determinant, hence the same characteristic
polynomial $\lambda^{2}-\operatorname{tr}(B_0)\lambda+\det(B_0)$ and the same eigenvalues
$c_{11},c_{33}$. Both are real symmetric, so the spectral theorem gives $B'=Q\,B_0\,Q^{T}$
for some $Q\in O(2)$, after reordering the eigenvalues by a permutation matrix in $O(2)$ if
necessary. If $\det(Q)=-1$, write $Q=\reflA{\psi}=\rotA{2\psi}D$ with
$D=\operatorname{diag}(1,-1)$; as $B_0$ is diagonal, $DB_0D^{T}=B_0$ and therefore
$QB_0Q^{T}=\rotA{2\psi}B_0\rotA{2\psi}^{T}$. Every $O(2)$-conjugate of $B_0$ is thus already
a rotation conjugate, so $B'=\rotA{\phi}B_0\rotA{\phi}^{T}$ for some $\phi$. With
$\theta\coloneqq\phi/2$,~\eqref{eq:Bconj1} shows that $\rotC{\theta}\tilde c$ is of the form
\eqref{eq:isoslow2} with block $B'$, and such a tensor is determined by its block, so
$c'=\rotC{\theta}\tilde c$.
\end{proof}

\subsection{Symmetry axes}

The matrix $\reflA{\theta}\in O(2)$ is the reflection across the line
\begin{equation}
L^\theta\coloneqq
\{
t(\cos(\theta),\sin(\theta))\in \R^2
;
t\in \R
\}.
\end{equation}
The induced reflection of a stiffness tensor is given in~\eqref{eq:reflC}.

\begin{definition}\label{def:symmAx}
    The line $L^\theta$ is a symmetry axis for $c$ if and only if $c=\reflC{\theta}c$.
\end{definition}

We observe that there is the following equivalent definition:

\begin{lemma}
\label{lem:char_of_reflection_invariance}
For $\theta \in\R$, denote
$v=(\cos(\theta),\sin(\theta))$
and
$w=(-\sin(\theta),\cos(\theta))$.
The line $L^\theta$ is a symmetry axis for $c$ if and only if
    \begin{equation}
        \label{eq:symmetry_axis_condition}
    c(v,v,v,w)=c(w,w,v,w)=0.
    \end{equation}
If $L^\theta$ is a symmetry axis, so is $L^{\theta+\pi/2}$.
\end{lemma}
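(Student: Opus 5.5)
Write $R\coloneqq\reflA{\theta}$ and use the orthonormal basis $v,w$ from the statement. The reflection satisfies $Rv=v$ and $Rw=-w$. This holds because $\reflA{\theta}$ is the reflection across $L^\theta$, which can be checked directly with double-angle formulas.

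The plan is to work coordinate-free and regard $c$ as a symmetric $4$-linear form $c(a,b,d,e)=c_{ijk\ell}a_ib_jd_ke_\ell$. Then \eqref{eq:reflC} reads $(\reflC{\theta}c)(a,b,d,e)=c(Ra,Rb,Rd,Re)$, using $R=R^T$ (or simply the transpose convention, since $R$ is an orthogonal involution). Two multilinear forms agree exactly when they agree on all basis tuples from $\{v,w\}^4$. On such a tuple, $\reflC{\theta}c$ equals $(-1)^m c(\dots)$, where $m$ is the number of $w$'s. Hence $c=\reflC{\theta}c$ if and only if $c$ vanishes on every basis tuple with an odd number of $w$'s.

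Next, use the minor and major symmetries \eqref{eq:sym1}--\eqref{eq:sym2}. Every tuple with exactly one $w$ is a permutation within the allowed symmetry group of $(v,v,v,w)$. This group is generated by swapping the first pair, swapping the second pair, and swapping the two pairs, and it acts transitively on the four slots. So all such entries equal $c(v,v,v,w)$.

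Likewise, every tuple with exactly three $w$'s equals $c(w,w,v,w)$. The condition therefore reduces exactly to \eqref{eq:symmetry_axis_condition}. I expect the only real care needed is in this step: checking that the symmetry group really moves the single odd slot to any position. The swaps $i\leftrightarrow j$, $k\leftrightarrow\ell$ and $(ij)\leftrightarrow(k\ell)$ do suffice.

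For the final claim, note that for $L^{\theta+\pi/2}$ the vectors become $v'=w$ and $w'=-v$. The conditions then read $c(w,w,w,-v)=0$ and $c(v,v,w,-v)=0$. By the same symmetries, these are $-c(w,w,v,w)=0$ and $-c(v,v,v,w)=0$, which are the same conditions as before.

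Alternatively, $\reflA{\theta+\pi/2}=-\reflA{\theta}$, and the action \eqref{eq:reflC} is quartic in the matrix, so $\reflC{\theta+\pi/2}=\reflC{\theta}$. This gives the claim immediately.
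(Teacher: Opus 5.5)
Your proof is correct and follows the paper's argument. You expand in the basis $\{v,w\}$, use $Rv=v$ and $Rw=-w$ to get the sign $(-1)^{q}$, and deduce the final claim from $\reflA{\theta+\pi/2}=-\reflA{\theta}$. You also use the minor and major symmetries explicitly to reduce every odd-$w$ tuple to $c(v,v,v,w)$ or $c(w,w,v,w)$, which fills in a step the paper leaves implicit.
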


\begin{proof}
We denote $\reflA{\theta}=R$ for simplicity.
First we compute
\begin{equation}
c(\xi_1,\xi_2,\xi_3,\xi_4),
\end{equation}
where each $\xi_j$ is either $v$ or $w$.
Let $q\in \{0,1,2,3,4\}$ be the number of $w$s.
The vectors were chosen so that
$Rv=v$
abd
$Rw=-w$.
Thus by linearity
\begin{equation}
(\reflC{\theta}c)(\xi_1,\xi_2,\xi_3,\xi_4)
=
c(R\xi_1,R\xi_2,R\xi_3,R\xi_4)
=
(-1)^q c(\xi_1,\xi_2,\xi_3,\xi_4).
\end{equation}
As $\{v,w\}$ is a basis of $\R^2$, we have indeed that $c=\reflC{\theta}c$ is equivalent with~\eqref{eq:symmetry_axis_condition}.

As $\reflA{\theta+\pi/2}=-\reflA{\theta}$, we have $\reflC{\theta+\pi/2}=\reflC{\theta}$.
Therefore $L^\theta$ and $L^{\theta+\pi/2}$ are either both or neither a symmetry axis.
\end{proof}

We use this Lemma to observe the following properties of algebraically admissible and isotropic stiffness tensors:
\begin{lemma}\label{lem:fixed_axes}
Let $c$ be a positive definite algebraically admissible stiffness tensor  written in standard Euclidean coordinates of $\R^2$. 
The tensor $c$ is isotropic if and only if every line is a symmetry axis of $c$. If $c$ is non-isotropic then it has exactly two orthogonal symmetry axes. Furthermore, the symmetry axes of $c$ are the coordinate axes if and only if 
\begin{equation}
\label{eq:fixed_axes}
    c_{13}=c_{23}=0.
\end{equation}
\end{lemma}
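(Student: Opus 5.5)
The third claim is immediate from Lemma~\ref{lem:char_of_reflection_invariance}, so I would start there. For $\theta=0$ we have $v=e_1$ and $w=e_2$. The condition \eqref{eq:symmetry_axis_condition} then reads $c_{1112}=c_{2212}=0$, i.e.\ $c_{13}=c_{23}=0$ in Voigt notation. By the last sentence of that lemma, $L^0$ is an axis iff $L^{\pi/2}$ is. Hence the coordinate axes are symmetry axes iff \eqref{eq:fixed_axes} holds. More generally, $L^\theta$ is an axis of $c$ iff $(\rotC{-\theta}c)_{13}=(\rotC{-\theta}c)_{23}=0$. I will write these two entries, via Appendix~\ref{ap:rotrefl}, as trigonometric polynomials $f(\theta),g(\theta)$ containing only the frequencies $2\theta$ and $4\theta$.

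For the isotropy equivalence, the forward direction is a direct check. The tensor \eqref{eq:iso_tenor} is built from $\delta$ and is therefore $O(2)$-invariant, so every reflection fixes it. Conversely, suppose every line is an axis. The reflections generate $O(2)$, so $c$ is invariant under all rotations. Decompose $\Sym^2(\Sym^2(\R^2))$ into Fourier modes under $SO(2)$: these have frequencies $0$, $2$ and $4$, and the frequency-$0$ part is two-dimensional. This part is spanned exactly by $\delta_{ij}\delta_{k\ell}$ and $\delta_{ik}\delta_{j\ell}+\delta_{i\ell}\delta_{jk}$. Hence $c$ has the form \eqref{eq:iso_tenor}. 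A more hands-on alternative is to require that $f\equiv g\equiv 0$ and read off, coefficient by coefficient, $c_{13}=c_{23}=0$, $c_{11}=c_{22}$ and $2c_{33}=c_{11}-c_{12}$. This direction does not even need admissibility.

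For the non-isotropic case there are two tasks: existence of an axis, and showing there are no more than two. For existence I would use the geometry of the slowness surface. After a rotation (which preserves $P_1$, $P_2$ and positivity, by the invariance noted in the introduction), I may assume the ellipse of the slowness surface has its principal axes along the coordinate axes. I then aim to show that $P_1=P_2=0$, positivity and this alignment force $c_{13}=c_{23}=0$. Concretely, I would write the slowness polynomial as (circle)$\times$(aligned ellipse), that is, with no $\xi_1\xi_2$ term in the ellipse factor. I would then eliminate the radius and ellipse parameters as in the proof of Proposition~\ref{prop:alg-admissible}, using Magma. I expect the output to be $c_{13}=c_{23}=0$ up to components that Lemma~\ref{lem:posdef} excludes, such as the semidefinite ones of type \eqref{eq:isoslow2}. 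I expect this elimination step, and in particular discarding the spurious non-positive components, to be the main obstacle. A fallback is to show directly that $f$ and $g$ have a common zero, using $P_1=P_2=0$ to relate their frequency-$2$ and frequency-$4$ coefficients.

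Finally, for uniqueness, suppose $c$ has axes $L^0$ (after rotation) and $L^\alpha$ with $\alpha\notin\frac{\pi}{2}\mathbb{Z}$. Composing the two reflections gives invariance under rotation by $2\alpha$. The frequency-$2$ part of $c$ then vanishes unless $4\alpha\in2\pi\mathbb{Z}$, and the frequency-$4$ part vanishes unless $8\alpha\in 2\pi\mathbb{Z}$. So either $c$ is isotropic, or $\alpha=\pi/4$ and $c$ has cubic form: $c_{13}=c_{23}=0$ and $c_{11}=c_{22}$. In the cubic case
\[
P_2=-c_{11}^2+2c_{11}c_{33}+c_{12}^2+2c_{12}c_{33}=(c_{11}+c_{12})(c_{12}-c_{11}+2c_{33}).
\]
Positivity gives $c_{11}>|c_{12}|$, so $c_{11}+c_{12}>0$. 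Hence $c_{12}=c_{11}-2c_{33}$, which is isotropic, a contradiction. Therefore a non-isotropic $c$ has exactly the two orthogonal axes $L^\theta$ and $L^{\theta+\pi/2}$.
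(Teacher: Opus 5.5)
Your proof is correct, but it handles the non-isotropic case by a different route than the paper.

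\textbf{The paper's route.} It works with the two binary quartics $Q_1(a,b)=c(v,v,w,v)$ and $Q_2(a,b)=c(w,w,v,w)$ and counts their common zeros $S\subset\R\mathbb{P}^1$. Pairing gives $\#S\in\{0,2,4\}$. The case $\#S=4$, i.e.\ $Q_1=\alpha Q_2$, is excluded by a computer-algebra check: combined with $P_1=P_2=0$, it leaves only isotropic positive definite tensors. The case $\#S=0$ is excluded because the resultant of $Q_1,Q_2$ has as a factor the square of a combination of $P_1$ and $P_2$, followed by a count of complex-conjugate common roots.

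\textbf{Your route.} You get existence from the principal axes of the ellipse. You get uniqueness from the $SO(2)$ Fourier decomposition, which reduces a third axis to the cubic case, where $P_2=(c_{11}+c_{12})(c_{12}-c_{11}+2c_{33})$ factors by hand.

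\textbf{The Magma step is not needed.} The elimination you flagged as the main obstacle is immediate. An aligned product $(\xi_1^2+\xi_2^2-r^2)(k_{20}\xi_1^2+k_{02}\xi_2^2+k_{00})$ is even in $\xi_1$. Hence the $\xi_1\xi_2$, $\xi_1^3\xi_2$ and $\xi_1\xi_2^3$ coefficients of $\spoly{c}$ vanish, giving $c_{23}=-c_{13}$, $c_{13}(c_{11}+c_{12})=0$ and $c_{13}(c_{12}+c_{22})=0$. If $c_{13}\neq0$, then $c_{11}=c_{22}=-c_{12}$, so $c_{11}c_{22}-c_{12}^2=0$, contradicting Lemma~\ref{lem:posdef}.

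\textbf{No circularity.} Your rotation step needs $\spoly{\rotC{\theta}c}(\xi)=\spoly{c}(\rotA{\theta}^{T}\xi)$ and preservation of positivity under rotation. The paper proves these only later, in Lemmas~\ref{lem:P1_P2_vanish_is_rotationally_invariant} and~\ref{lem:psi-equivariant}. Those proofs are elementary and do not depend on this lemma, so you may use them here.

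\textbf{What each approach buys.} Your route is computer-free and sidesteps the complex-root bookkeeping in the resultant step. It also shows directly that the ellipse's principal axes are symmetry axes of $c$, which is the substance of Corollary~\ref{cor:symmetry_axis_of_c_and_sigma_agree}. The paper's route stays inside the purely polynomial description $P_1=P_2=0$ and is algorithmic. It needs neither the factored slowness polynomial nor representation theory.
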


\begin{proof}
We use Lemma~\ref{lem:char_of_reflection_invariance} to identify symmetry axes.
Define $v$ and $w$ as in that lemma.
We can spell out $c(v,v,w,v)$ and $c(w,w,v,w)$ in terms of $a=\cos(\theta)$ and $b=\sin(\theta)$:
\begin{equation}
\begin{split}
c(v,v,w,v)
&=
a^4 c_{13}
+a^3b(-c_{11}+c_{12}+2c_{33})
+3a^2b^2(c_{23}-c_{13})
\\&\qquad
+ab^3(-c_{12}+c_{22}-2c_{33})
-b^4c_{23}
\eqqcolon Q_1(a,b)
,\\
c(w,w,v,w)
&=
a^4 c_{23}
+a^3b(-c_{12}+c_{22}-2c_{33})
+3a^2b^2(c_{13}-c_{23})
\\&\qquad
+ab^3(-c_{11}+c_{12}+2c_{33})
-b^4c_{13}
\eqqcolon Q_2(a,b)
.
\end{split}
\end{equation}
These $Q_1(a,b)$ and $Q_2(a,b)$ are homogeneous polynomials in $(a,b)$.

All lines are symmetry axes if and only if both polynomials $Q_1$ and $Q_2$ vanish identically.
It is easy to check that the following are equivalent:
\begin{enumerate}
\item
$Q_1\equiv0$.
\item
$Q_2\equiv0$.
\item
$
c_{13}
=
c_{23}
=
c_{11}-c_{22}
=
c_{12}-c_{22}+2c_{33}
=
0
$.
\item
$c$ is isotropic.
\end{enumerate}

Suppose then that $c$ is not isotropic.
Because the two polynomials $Q_1$ and $Q_2$ have degree $4$, there are up to $4$ points on $S\coloneqq\V(Q_1,Q_2)\subset\R \mathbb{P}^1$.
By the last part of Lemma~\ref{lem:char_of_reflection_invariance} $(a,b)\in\V(Q_1,Q_2)$ if and only if $(b,-a)\in\V(Q_1,Q_2)$, so points of $S$ come in pairs.
Thus $\#S\in\{0,2,4\}$.
By construction of the projective space $\R \mathbb{P}^1$, the pairs $(a,b)$ and $(-a,-b)$ are identified (and they also produce the exact same symmetry axis), so the number of points on $\V(Q_1,Q_2)\cap\sphere^1\subset\R^2$ is twice the size of $S$.

The extreme case $\#S=4$ happens if and only if $Q_1=\alpha Q_2$ for some $\alpha\in\R$.
Imposing this condition on the coefficients of the polynomials $Q_1$ and $Q_2$ and the conditions $P_1(c)=P_2(c)=0$ leads to a system of equations.
It is a straightforward computer algebra exercise that the system has three linear families of solutions and that the determinant of $c$ vanishes identically on two of them.
The only positive definite solution is
\begin{equation}
c_{22}=c_{11},
c_{12}=c_{11}-2c_{33},
c_{13}=c_{23}=0,
\end{equation}
which, as noted just above, gives exactly the isotropic stiffness tensors.

Suppose then the other extreme case $\#S=0$.
If the two polynomials have shared roots, they have to be complex.
Due to the real coefficients they come in conjugate pairs.
By the symmetry noted above, there have to be two conjugate pairs.
This is already all four roots, so the polynomials would have to be constant multiples of one another, which we just ruled out.
Therefore, the two polynomials have no shared roots at all, so their resultant is non-zero.
It is easy to verify that the resultant is
\begin{equation}
\begin{split}
&
[
(-c_{11}-c_{22}+4c_{33}+2c_{12})P_1(c)
+
2(c_{23}-c_{13})P_2(c)
]^2
\\&\times
(16c_{33}^2-8c_{22}c_{33}+16c_{12}c_{33}-8c_{11}c_{33}+16c_{23}^2-32c_{13}c_{23}+c_{22}^2
\\&\quad
-4c_{12}c_{22}+2c_{11}c_{22}+16c_{13}^2+4c_{12}^2-4c_{11}c_{12}+c_{11}^2)
,
\end{split}
\end{equation}
so it vanishes under our assumption $P_1(c)=P_2(c)=0$.
Therefore $\#S\neq0$.

We have thus found $\#S=2$.
The last claim follows from Lemma~\ref{lem:char_of_reflection_invariance}.
\end{proof}

\begin{corollary}
\label{cor:symmetry_axis_of_c_and_sigma_agree}
    If $c$ is an algebraically admissible stiffness tensor then its symmetry axes agree with the symmetry axes of its slowness surface $\spoly{c}$. 
\end{corollary}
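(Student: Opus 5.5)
The plan is to use the fact that $\spoly{c}$ is built from $c$ by an $O(2)$-equivariant construction: reflections of $c$ correspond to reflections of the slowness surface. Then the two sets of symmetry axes will be compared directly, splitting into the isotropic and the non-isotropic case of Lemma~\ref{lem:fixed_axes}.

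First I will verify the equivariance. For $R=\reflA{\theta}$ we have $(\reflC{\theta}c)_{ijk\ell}\xi_j\xi_k = R_{ia}R_{\ell d}\,c_{abcd}(R^T\xi)_b(R^T\xi)_c$, so $\Gamma_{\reflC{\theta}c}(\xi)=R\,\Gamma_c(R^T\xi)R^T$. Since $\det R^2=1$, this gives $\spoly{\reflC{\theta}c}(\xi)=\spoly{c}(R^T\xi)=\spoly{c}(R\xi)$. Consequently, if $L^\theta$ is a symmetry axis of $c$, then $\spoly{c}$ is invariant under $R$, so $L^\theta$ is a symmetry axis of the slowness surface. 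This settles one inclusion.

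For the converse, assume $c$ is algebraically admissible. I will treat it as positive definite, since Lemma~\ref{lem:fixed_axes} is stated under that standing assumption, and reduce the slowness surface to an ellipse $E$ together with a concentric circle. A reflection preserving $\spoly{c}$ preserves the zero set, and it must map the circle to itself and the ellipse to itself, since a reflection cannot exchange a circle with a non-circular ellipse. There are two cases.

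\textbf{Case 1: $E$ is a circle.} The slowness surface is then two concentric circles. By Proposition~\ref{prop:two-circles}, positivity forces $c$ to be isotropic. Every line is a symmetry axis of such a $c$ by Lemma~\ref{lem:fixed_axes}, and every line is also a symmetry axis of the two circles, so the two sets coincide.

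\textbf{Case 2: $E$ is a genuine ellipse.} Its symmetry axes, and hence those of the slowness surface, are exactly its two principal axes. Two facts are needed here. The surface cannot be two circles, so by Proposition~\ref{prop:two-circles} the tensor $c$ is not isotropic. Lemma~\ref{lem:fixed_axes} then says $c$ has exactly two orthogonal symmetry axes. By the first inclusion these are symmetry axes of the slowness surface, so they are two orthogonal lines among the two principal axes of $E$, hence equal to them.

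The main obstacle is the equivariance computation: getting the index placement right so that the factor $\det(\cdot)$ drops out and the invariant set is genuinely $R$-invariant rather than $R^T$-invariant. For reflections $R=R^T$, so this is harmless. A secondary subtlety is that the corollary omits positivity. If needed, I would note that the paper's admissible classes are positive definite by Definition~\ref{def:admissible}, so the lemmas apply.
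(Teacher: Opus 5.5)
Your proof is correct and follows essentially the same route as the paper's. In both, the equivariance $\spoly{\reflC{\theta}c}(\xi)=\spoly{c}(\reflA{\theta}\xi)$ shows that every symmetry axis of $c$ is one of the slowness surface. Proposition~\ref{prop:two-circles} handles the two-circle (isotropic) case, and in the non-isotropic case Lemma~\ref{lem:fixed_axes} gives exactly two axes of $c$, which must then be the two axes of the non-circular ellipse. The only differences are cosmetic: you split cases by the shape of the ellipse rather than by isotropy of $c$ (equivalent via Proposition~\ref{prop:two-circles}), and you write out the equivariance computation that the paper only asserts.
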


\begin{proof}
If $c$ is isotropic then all lines are symmetry axes for it and its slowness surface.

If $c$ is not isotropic it has exactly two symmetry axes by Lemma~\ref{lem:fixed_axes}.
The slowness surface consists of a circle and an ellipse, and the ellipse cannot be a
circle: otherwise the slowness surface would be the union of two concentric circles and
$c$ would be isotropic by Proposition~\ref{prop:two-circles}.
Any symmetry of the stiffness tensor is clearly a symmetry of the slowness surface: $\spoly{\reflC{\theta}c}=\reflA{\theta}\spoly{c}$.
The ellipse only has two symmetry axes, so they can only be the two symmetry axes of $c$.
\end{proof}

\begin{remark}
Due to the observation made in Remark~\ref{rem:Isoparam} we note that in Corollary~\ref{cor:symmetry_axis_of_c_and_sigma_agree} the positive definitiveness is crucial.
\end{remark}

In the next lemma, under the assumption that the symmetry axes of the stiffness tensor are the coordinate axes, we present a simplified version for the characterization of the algebraically admissible stiffness tensors.

Define the polynomial
\begin{equation}\label{eq:Deq}
    D\coloneqq (c_{11}-c_{33})(c_{22}-c_{33}).
\end{equation}

We are interested in those $c$ satisfying
\begin{equation}\label{eq:Dcond}
    D=(c_{12}+c_{33})^2,
\end{equation}
corresponding to the equality in~\eqref{eq:ellipticity_in_fixed_axes}.

\begin{lemma}
\label{lma:special-characterization}
A positive definite stiffness tensor $c$ that has the coordinate axes as symmetry axes, that is $c_{13}=c_{23}=0$, is algebraically admissible if and only if~\eqref{eq:Dcond} holds.
\end{lemma}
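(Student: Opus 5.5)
The plan is to specialize Proposition~\ref{prop:alg-admissible} to the case $c_{13}=c_{23}=0$. Since $c$ is positive definite, that proposition says $c$ is algebraically admissible if and only if $P_1(c)=P_2(c)=0$. So it suffices to show that, on the linear subspace $c_{13}=c_{23}=0$, the system $P_1=P_2=0$ is equivalent to~\eqref{eq:Dcond}.

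Substituting $c_{13}=c_{23}=0$ into~\eqref{eq:P1}, every monomial of $P_1$ contains $c_{13}$ or $c_{23}$, so $P_1$ vanishes identically. In~\eqref{eq:P2} the terms $-c_{13}^2-2c_{13}c_{23}-c_{23}^2$ drop out, leaving
\begin{equation}
P_2(c)=-c_{11}c_{22}+c_{11}c_{33}+c_{12}^2+2c_{12}c_{33}+c_{22}c_{33}.
\end{equation}
Next I expand both sides of~\eqref{eq:Dcond}:
\begin{equation}
(c_{12}+c_{33})^2-(c_{11}-c_{33})(c_{22}-c_{33})
=c_{12}^2+2c_{12}c_{33}+c_{33}^2-c_{11}c_{22}+c_{11}c_{33}+c_{22}c_{33}-c_{33}^2 .
\end{equation}
This is exactly $P_2(c)$. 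Hence $P_1(c)=P_2(c)=0$ holds if and only if $D=(c_{12}+c_{33})^2$, which proves the lemma.

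There is no real obstacle beyond this bookkeeping; the substantive work is already in Proposition~\ref{prop:alg-admissible}, including its use of positive definiteness to exclude spurious points coming from infinity. The only care needed is to confirm that the positivity hypothesis of that proposition is met, which it is by assumption.

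A direct alternative would avoid the proposition. One would factor $\spoly{c}$ with $c_{13}=c_{23}=0$ as $(c_{33}|\xi|^2-1)(a\xi_1^2+b\xi_2^2-1)$ and compare coefficients. This is consistent but less economical, so I would use the specialization above.
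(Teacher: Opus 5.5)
Your proof is correct and follows the paper's argument exactly: invoke Proposition~\ref{prop:alg-admissible} for positive definite $c$, observe that $P_1\equiv 0$ when $c_{13}=c_{23}=0$, and identify $P_2$ with $\pm\bigl(D-(c_{12}+c_{33})^2\bigr)$. Your sign, $P_2=(c_{12}+c_{33})^2-D$, is the correct one. The paper writes $P_2=D-(c_{12}+c_{33})^2$, but the sign does not matter for the vanishing condition.
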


\begin{proof}
With $c_{13}=c_{23}=0$ we have
\begin{equation}
P_1(c)
=
0,
\qquad
P_2(c)=D-(c_{12}+c_{33})^2
,
\end{equation}
so the claim follows from Proposition~\ref{prop:alg-admissible}.

\end{proof}

\begin{lemma}
\label{lma:special-characterization-analytic}
A positive definite stiffness tensor $c$ that has the coordinate axes as symmetry axes, that is $c_{13}=c_{23}=0$, is analytically admissible if and only if~\eqref{eq:Dcond} holds and $D\neq 0$.
\end{lemma}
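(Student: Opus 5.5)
By Lemma~\ref{lma:special-characterization}, a positive definite $c$ with $c_{13}=c_{23}=0$ is algebraically admissible exactly when~\eqref{eq:Dcond} holds. So it remains to show that, under~\eqref{eq:Dcond}, the circle and the ellipse are disjoint if and only if $D\neq 0$. I plan to verify this by direct computation with the Christoffel matrix rather than by specialising Proposition~\ref{prop:an-admissible}. That route is still a useful cross-check: at $c_{13}=c_{23}=0$ we get $R_1=(c_{12}+c_{33})^2$ and $R_2=D$, so under~\eqref{eq:Dcond} we have $R=2D$ and $Q=D^2$. Then $Q\neq 0$ is equivalent to $D\neq 0$, and in that case $R\geq 0$ is automatic because $D=(c_{12}+c_{33})^2\geq 0$.

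The direct computation goes as follows. With $c_{13}=c_{23}=0$ the Christoffel matrix is
\begin{equation*}
\Gamma(\xi)=\begin{bmatrix} c_{11}\xi_1^2+c_{33}\xi_2^2 & (c_{12}+c_{33})\xi_1\xi_2\\ (c_{12}+c_{33})\xi_1\xi_2 & c_{33}\xi_1^2+c_{22}\xi_2^2\end{bmatrix}.
\end{equation*}
The circle and ellipse meet exactly when $\Gamma(\xi)$ has a double eigenvalue equal to $1$ for some $\xi\neq 0$. By homogeneity this is the same as $\Gamma(\xi)$ being a multiple of the identity for some $\xi\neq0$, after which $\xi$ can be rescaled so that the eigenvalue is $1$. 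The double eigenvalue is positive because $\Gamma$ is positive definite, by Lemma~\ref{lem:posdef}.

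The condition that $\Gamma(\xi)$ is scalar consists of two equations. The first is that the off-diagonal entry vanishes: $(c_{12}+c_{33})\xi_1\xi_2=0$. The second is that the diagonal entries agree: $(c_{11}-c_{33})\xi_1^2=(c_{22}-c_{33})\xi_2^2$. I will split into two cases.

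\emph{Case $D=0$.} Then~\eqref{eq:Dcond} forces $c_{12}+c_{33}=0$, so the off-diagonal condition holds for every $\xi$. If $c_{11}=c_{33}$, take $\xi=(1,0)$; if $c_{22}=c_{33}$, take $\xi=(0,1)$. In either case the diagonal entries agree, so the sheets meet and $c$ is not analytically admissible.

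\emph{Case $D\neq 0$.} Then $c_{12}+c_{33}\neq 0$, so any meeting point has $\xi_1\xi_2=0$. If $\xi_2=0$ and $\xi_1\neq 0$, the diagonal condition reads $(c_{11}-c_{33})\xi_1^2=0$, which is impossible since $c_{11}\neq c_{33}$. The case $\xi_1=0$ is symmetric. Hence the sheets are disjoint and $c$ is analytically admissible.

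The only delicate point is the reduction ``meeting $\iff$ $\Gamma(\xi)$ scalar for some $\xi\neq 0$''. It relies on the slowness surface being the set where some eigenvalue of $\Gamma$ equals $1$, and on a common point of the two sheets being a root of both eigenvalue branches. Once this is in place, the rest is the elementary case analysis above.
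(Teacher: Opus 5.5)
Your proof is correct, and your main argument takes a different route from the paper's. The paper's proof is exactly what you list as a cross-check. It specialises Proposition~\ref{prop:an-admissible} to $c_{13}=c_{23}=0$ and observes $R_1=(c_{12}+c_{33})^2$ and $R_2=D$. Under \eqref{eq:Dcond} this gives $R=2R_1\geq 0$ and $Q=D^2$, so $Q\neq 0$ is equivalent to $D\neq 0$. That is a two-line algebraic check, but all of its content is inherited from Proposition~\ref{prop:an-admissible}, which rests on the multiplicity criterion quoted from \cite[Prop.~26]{IKK25}.

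Your computation with the Christoffel matrix is self-contained and also explains the geometry. With aligned axes the two sheets can only touch on a coordinate axis, namely when one semi-axis of the ellipse equals the radius of the circle, i.e.\ $c_{11}=c_{33}$ or $c_{22}=c_{33}$.

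The step you flag as delicate closes in one line. If $\spoly{c}(\xi)=(q_1(\xi)-1)(q_2(\xi)-1)$ with quadratic forms $q_1,q_2$, compare homogeneous parts with $\det(\Gamma(\xi)-I)=\det\Gamma(\xi)-\tr\Gamma(\xi)+1$. This gives $\tr\Gamma=q_1+q_2$ and $\det\Gamma=q_1q_2$, so the eigenvalues of $\Gamma(\xi)$ are exactly $q_1(\xi)$ and $q_2(\xi)$. In your setting you can even check directly that they are $c_{11}\xi_1^2+c_{22}\xi_2^2$ and $c_{33}\abs{\xi}^2$. Meeting then amounts to $(c_{11}-c_{33})\xi_1^2+(c_{22}-c_{33})\xi_2^2=0$ having a nonzero solution, i.e.\ $D\leq 0$.

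One small citation fix: positivity of $\Gamma(\xi)$ follows from \eqref{eq:pos} and the minor symmetries applied to $A=\frac12(v\xi^T+\xi v^T)$, not from Lemma~\ref{lem:posdef}. In the only place you need it, the scalar is $c_{33}>0$ anyway.
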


\begin{proof}
By Proposition~\ref{prop:an-admissible} $c$ is analytically admissible if and only if $P_1(c)=P_2(c)=0$, $R(c)\geq 0$ and $Q(c)\neq 0$, we call these conditions property A. We show that under the assumption $c_{13}=c_{23}=0$ property A is equivalent to $D\neq0$. We have 
\begin{align}
        R_1(c)&=(c_{12}+c_{33})^2
        \quad \text{and} \quad
        R_2(c)=(c_{11}-c_{33})(c_{22}-c_{33}).
\end{align}
Suppose that property A holds. From Lemma \ref{lma:special-characterization} the equality $(c_{11}-c_{33})(c_{22}-c_{33})=(c_{12}+c_{33})^2$ follows. This yields $R_1(c)=R_2(c)$ and $Q(c)=R_1(c)R_2(c)=R_2(c)^2=D^2(c)$. Since $Q(c)\neq 0$ also $D\neq 0$.

Suppose that $D=(c_{12}+c_{33})^2$ and $D\neq 0$. Lemma \ref{lma:special-characterization} gives $P_1(c)=P_2(c)=0$. Furthermore, $R_2(c)=R_1(c)\geq 0$ yield
    \begin{equation}
        R(c)=R_1(c)+R_2(c)=2R_1(c)\geq 0.
    \end{equation}
as well as
    \begin{equation}
        Q(c)=R_2(c)^2=D^2(c)\neq 0.
    \end{equation}
We have verified property $A$.
\end{proof}

The following Lemma shows that algebraic admissibility as in Proposition \ref{prop:alg-admissible} is a coordinate invariant property:

\begin{lemma}
\label{lem:P1_P2_vanish_is_rotationally_invariant}
Let $c$ be a positive definite stiffness tensor then $\rotC{\theta}c$ is a positive definite stiffness tensor and the polynomials $P_1(\rotC{\theta}c)$ and $P_2(\rotC{\theta}c)$ can be computed from $P_1(c),P_2(c)$ via the invertible linear system
\begin{equation}
\label{eq:transformation_of_P1_P2}
\begin{pmatrix}
2P_1(\rotC{\theta}c)\\
P_2(\rotC{\theta}c)
\end{pmatrix}
=
\begin{pmatrix}
\cos(4\theta) & -\sin(4\theta)\\
\sin(4\theta) & \cos(4\theta)
\end{pmatrix}
\begin{pmatrix}
2P_1(c)\\
P_2(c)
\end{pmatrix}
\end{equation}

\end{lemma}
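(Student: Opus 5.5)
Positivity is immediate, so the work is in the transformation law for $(P_1,P_2)$. Since $\rotA{\theta}$ is invertible, the substitution $A\mapsto \rotB{\theta}^{T}A$ is a bijection of nonzero symmetric matrices. We have
\[
(\rotC{\theta}c)_{mnop}A_{mn}A_{op}=c_{ijk\ell}(\rotB{\theta}^TA)_{ij}(\rotB{\theta}^TA)_{k\ell}.
\]
Hence positivity~\eqref{eq:pos} of $c$ transfers to $\rotC{\theta}c$. The symmetries~\eqref{eq:sym1}--\eqref{eq:sym2} are preserved as well, because all four indices are transformed by the same matrix. By Lemma~\ref{lem:posdef} this shows that $\rotC{\theta}c$ is a positive definite stiffness tensor.

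For the identity~\eqref{eq:transformation_of_P1_P2}, I would substitute the explicit Voigt entries of $\rotC{\theta}c$ from Appendix~\ref{ap:rotrefl} into $P_1$ and $P_2$. It is cleaner to organise the computation by representation theory of $SO(2)$ rather than expand blindly.

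Decompose $\Sym^2(\Sym^2(\R^2))$ into isotypic pieces of angular frequency $0,2,4$. Write these in the complex coordinates $\alpha_0,\beta_0$ (real invariants), $\alpha_2$ (frequency $2$) and $\alpha_4$ (frequency $4$). Concretely,
\[
\alpha_4=\tfrac18\bigl(c_{11}+c_{22}-2c_{12}-4c_{33}\bigr)+\tfrac{i}{2}(c_{13}-c_{23}),
\]
or a similar normalisation, and $\alpha_2$ is a combination of $c_{11}-c_{22}$ and $c_{13}+c_{23}$. Under rotation by $\theta$ one has $\alpha_k\mapsto e^{ik\theta}\alpha_k$.

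The claim is then that $P_2+2iP_1$, up to sign conventions, is a polynomial $F$ of weight $4$. I would aim to write it in a form such as
\[
F=\lambda\,\alpha_2^2+\mu(\alpha_0,\beta_0)\,\alpha_4
\]
with $\mu$ linear in the invariants, possibly plus a term $\bar\alpha_2^{\,2}\alpha_4^{\,2}$. The latter has weight $-4+8=4$ but degree $4$, so it is excluded because $P_1,P_2$ are quadratic. A degree-$2$ polynomial of weight $4$ can only be spanned by $\alpha_2^2$, $\alpha_0\alpha_4$ and $\beta_0\alpha_4$. Once $F$ is exhibited in this form, $F(\rotC{\theta}c)=e^{4i\theta}F(c)$ is immediate. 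Taking real and imaginary parts gives exactly the rotation matrix by $4\theta$ acting on $(2P_1,P_2)$, with the order and signs fixed by matching one test case.

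The main obstacle is the bookkeeping: pinning down the exact linear coordinates $\alpha_0,\beta_0,\alpha_2,\alpha_4$ in Voigt notation (including the factors of $2$ coming from $c_{33}=c_{1212}$), and verifying that $P_2+2iP_1$ indeed lies in their weight-$4$ span. I would do this by a finite coefficient comparison, or simply check the identity for $\theta$-derivatives at $\theta=0$ in Magma as in the other proofs.

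As a fallback, the infinitesimal route suffices. Differentiate in $\theta$ using the generator of the action on Voigt coordinates and check the linear ODE
\[
\tfrac{d}{d\theta}(2P_1,P_2)=4J(2P_1,P_2),\qquad J=\begin{pmatrix}0&-1\\1&0\end{pmatrix}.
\]
Checking this at $\theta=0$ for general $c$ is enough, by the group property. Invertibility of the linear system is clear, since the matrix is a rotation with determinant $1$.
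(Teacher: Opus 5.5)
Your proof is correct. The fallback you describe is precisely the paper's argument, and so is your positivity argument. The paper differentiates the six Voigt entries at $\theta=0$, finds $\tfrac{d}{d\theta}P_1=-2P_2$ and $\tfrac{d}{d\theta}P_2=8P_1$, propagates this to all $\theta$ via $\rotC{s}\rotC{\theta}=\rotC{s+\theta}$, and exponentiates the resulting linear ODE.

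Your main route is genuinely different, and the identity you left as bookkeeping does hold. Take $\epsilon=(1,i)$. Then $\rotA{\theta}^T\epsilon=e^{i\theta}\epsilon$, so $\alpha_2\coloneqq c(\epsilon,\epsilon,\epsilon,\bar\epsilon)=(c_{11}-c_{22})+2i(c_{13}+c_{23})$ and your $\alpha_4=\tfrac18 c(\epsilon,\epsilon,\epsilon,\epsilon)$ satisfy $\alpha_k(\rotC{\theta}c)=e^{ik\theta}\alpha_k(c)$. The quantity $c(I,I)=c_{11}+c_{22}+2c_{12}$ is invariant. A direct expansion gives
\[
P_2-2iP_1=\tfrac14\,\alpha_2^{2}-2\,(c_{11}+c_{22}+2c_{12})\,\alpha_4 ,
\]
and the other invariant, $c_{11}+c_{22}-2c_{12}+4c_{33}$, does not enter.

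So with your normalisation it is $P_2-2iP_1$ that has weight $+4$. The combination $P_2+2iP_1$ is its conjugate, of weight $-4$. Either one yields exactly the stated matrix after taking real and imaginary parts of $F(\rotC{\theta}c)=e^{\pm 4i\theta}F(c)$, so no test case is needed to fix signs.

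Your route gives three things:
\begin{enumerate}
\item a closed-form covariance identity in place of an ODE argument;
\item a conceptual reason why the frequency is $4$;
\item an invariant description of the admissible locus: for real $c$, $P_1=P_2=0$ if and only if $\alpha_2^2=8(c_{11}+c_{22}+2c_{12})\alpha_4$.
\end{enumerate}
The paper's route needs only the first-order Voigt derivatives it already lists and avoids complexification. In the end both rest on a single finite polynomial identity check.
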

\begin{proof}
The tensor $\rotC{\theta}c$ inherits the symmetries~\eqref{eq:sym1} and~\eqref{eq:sym2}
from $c$. Let $A$ be a non-zero symmetric $(2\times2)$-matrix and set
$B\coloneqq\rotA{\theta}^{T}A\,\rotA{\theta}$. Grouping the factors in~\eqref{eq:rotC} gives
\begin{align}
\begin{aligned}
\label{eq:c_theta_A_A}
(\rotC{\theta}c)_{mnop}A_{mn}A_{op}
&=
c_{ijk\ell}
\bigl(\rotA{\theta}_{mi}A_{mn}\rotA{\theta}_{nj}\bigr)
\bigl(\rotA{\theta}_{ok}A_{op}\rotA{\theta}_{p\ell}\bigr)\\
&=
c_{ijk\ell}B_{ij}B_{k\ell}.
\end{aligned}
\end{align}
Since $\rotA{\theta}$ is invertible, $B$ is a non-zero symmetric matrix, so the right-hand
side of~\eqref{eq:c_theta_A_A} is positive because $c$ is positive definite. Hence
$\rotC{\theta}c$ is positive definite.

We verify the transformation~\eqref{eq:transformation_of_P1_P2}. Differentiating $\rotC{\theta}c$ in Voigt notation at $\theta=0$ gives
\begin{align}
\tfrac{d}{d\theta}\big|_{\theta=0}(\rotC{\theta}c)_{11}&=-4c_{13},
&
\tfrac{d}{d\theta}\big|_{\theta=0}(\rotC{\theta}c)_{22}&=4c_{23},\\
\tfrac{d}{d\theta}\big|_{\theta=0}(\rotC{\theta}c)_{12}&=2(c_{13}-c_{23}),
&
\tfrac{d}{d\theta}\big|_{\theta=0}(\rotC{\theta}c)_{13}&=c_{11}-c_{12}-2c_{33},\\
\tfrac{d}{d\theta}\big|_{\theta=0}(\rotC{\theta}c)_{23}&=c_{12}-c_{22}+2c_{33},
&
\tfrac{d}{d\theta}\big|_{\theta=0}(\rotC{\theta}c)_{33}&=2(c_{13}-c_{23}).
\end{align}
Substituting these into~\eqref{eq:P1} and~\eqref{eq:P2}, a direct computation yields
\begin{equation}
\tfrac{d}{d\theta}\big|_{\theta=0}P_1(\rotC{\theta}c)=-2P_2(c),
\qquad
\tfrac{d}{d\theta}\big|_{\theta=0}P_2(\rotC{\theta}c)=8P_1(c).
\end{equation}
As $\rotA{s}\rotA{\theta}=\rotA{s+\theta}$, we have $\rotC{s}\rotC{\theta}=\rotC{s+\theta}$,
so applying the two identities above to $\rotC{\theta}c$ in place of $c$ gives them for
every $\theta$. Writing $u(\theta)\coloneqq 2P_1(\rotC{\theta}c)$ and
$v(\theta)\coloneqq P_2(\rotC{\theta}c)$, this is the linear system
\begin{equation}
\frac{d}{d\theta}
\begin{pmatrix}u\\ v\end{pmatrix}
=
\begin{pmatrix}0 & -4\\ 4 & 0\end{pmatrix}
\begin{pmatrix}u\\ v\end{pmatrix},
\end{equation}
whose solution with initial value $(u(0),v(0))=(2P_1(c),P_2(c))$ is
\begin{equation}
\begin{pmatrix}u(\theta)\\ v(\theta)\end{pmatrix}
=
\exp\left(\theta\begin{pmatrix}0 & -4\\ 4 & 0\end{pmatrix}\right)
\begin{pmatrix}u(0)\\ v(0)\end{pmatrix}
=
\begin{pmatrix}
\cos(4\theta) & -\sin(4\theta)\\
\sin(4\theta) & \cos(4\theta)
\end{pmatrix}
\begin{pmatrix}2P_1(c)\\ P_2(c)\end{pmatrix}.
\end{equation}
This is~\eqref{eq:transformation_of_P1_P2}, and the system is invertible since the matrix
is a rotation, of determinant $1$.
\end{proof}

By Lemma~\ref{lem:P1_P2_vanish_is_rotationally_invariant} admissibility is preserved under
rotation. The next lemma shows that an admissible tensor can moreover always be rotated so
that the coordinate axes become symmetry axes.

\begin{lemma}
\label{lem:rotated_tensors}
    Let $c$ be an algebraically admissible stiffness tensor. Then there is
    $\theta\in[0,\pi)$ such that
    \begin{equation}
        (\rotC{\theta}c)_{13}=(\rotC{\theta}c)_{23}=0.
    \end{equation}
    For such a $\theta$ the tensor $\rotC{\theta}c$ is again algebraically admissible, and
    its admissibility reduces to
    \begin{equation}
        D(\rotC{\theta}c)
        =\bigl((\rotC{\theta}c)_{12}+(\rotC{\theta}c)_{33}\bigr)^{2}.
    \end{equation}
    If $c$ is analytically admissible, then in addition $D(\rotC{\theta}c)\neq 0$.
\end{lemma}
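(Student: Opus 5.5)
Implicitly the lemma is about positive definite tensors; Lemma~\ref{lem:fixed_axes} and Proposition~\ref{prop:alg-admissible} are only available in that setting, so we work there throughout.

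The plan is to find the rotation angle from the symmetry axes and then reduce to the earlier lemmas.

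\textbf{Step 1: find $\theta$.} By Lemma~\ref{lem:fixed_axes}, $c$ is either isotropic, in which case every line is a symmetry axis, or it has exactly two orthogonal symmetry axes $L^{\alpha}$ and $L^{\alpha+\pi/2}$. We then choose $\theta\in[0,\pi)$ with $\theta\equiv-\alpha \pmod{\pi/2}$, reducing modulo $\pi/2$ if necessary.

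\textbf{Step 2: the rotated tensor has the coordinate axes as symmetry axes.} From the definitions~\eqref{eq:rotC} and~\eqref{eq:reflC} we have the conjugation identity $\reflA{\alpha+\theta}=\rotA{\theta}\reflA{\alpha}\rotA{\theta}^{-1}$. This is the only computation needed, and it is a direct check with the explicit matrices. It gives
\[
\rotC{\theta}\reflC{\alpha}=\reflC{\alpha+\theta}\rotC{\theta}.
\]
Hence if $L^\alpha$ is a symmetry axis of $c$, then $L^{\alpha+\theta}$ is a symmetry axis of $\rotC{\theta}c$. By our choice of $\theta$, this line is a coordinate axis.

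By Lemma~\ref{lem:char_of_reflection_invariance} the orthogonal line is then also a symmetry axis. With $v=e_1$ and $w=e_2$, condition~\eqref{eq:symmetry_axis_condition} reads $c'(e_1,e_1,e_1,e_2)=c'(e_2,e_2,e_1,e_2)=0$ for $c'=\rotC{\theta}c$, that is, $c'_{13}=c'_{23}=0$.

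Here we use $\rotC{\theta}c$ rather than the tensor itself, and Lemma~\ref{lem:fixed_axes} requires algebraic admissibility and positivity. So it is safer to apply Lemma~\ref{lem:char_of_reflection_invariance} directly to the transported axis, which avoids needing Lemma~\ref{lem:fixed_axes} for the rotated tensor.

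\textbf{Step 3: admissibility is preserved.} By Lemma~\ref{lem:P1_P2_vanish_is_rotationally_invariant}, $\rotC{\theta}c$ is positive definite. Since $P_1(c)=P_2(c)=0$ by Proposition~\ref{prop:alg-admissible}, the linear relation~\eqref{eq:transformation_of_P1_P2} gives $P_1(\rotC{\theta}c)=P_2(\rotC{\theta}c)=0$. Proposition~\ref{prop:alg-admissible} then shows that $\rotC{\theta}c$ is algebraically admissible. Lemma~\ref{lma:special-characterization} now reduces admissibility to~\eqref{eq:Dcond} for $\rotC{\theta}c$.

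\textbf{Step 4: the analytic case.} If $c$ is analytically admissible, we need analytic admissibility of $\rotC{\theta}c$, after which Lemma~\ref{lma:special-characterization-analytic} gives $D\neq0$.

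Geometrically this is clear. Writing $R=\rotA{\theta}$, the Christoffel matrix of the rotated tensor satisfies $\Gamma_{\rotC{\theta}c}(\xi)=R\,\Gamma_c(R^{T}\xi)R^{T}$. So the slowness surface of $\rotC{\theta}c$ is the rotated image of that of $c$: still a circle and an ellipse centred at the origin, and they still do not meet.

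I expect the main obstacle to be bookkeeping rather than any deep difficulty. One issue is getting the direction of the rotation in the conjugation identity right, i.e.\ the sign of $\theta$ relative to $\alpha$. The other is the Christoffel transformation identity, which I would verify directly from~\eqref{eq:rotC}.
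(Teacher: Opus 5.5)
Your proof is correct and takes essentially the paper's route: a symmetry axis from Lemma~\ref{lem:fixed_axes} is rotated onto a coordinate axis to get $(\rotC{\theta}c)_{13}=(\rotC{\theta}c)_{23}=0$, and Lemmas~\ref{lma:special-characterization} and~\ref{lma:special-characterization-analytic} finish the argument; where the paper evaluates the rotated tensor directly via $(\rotC{\theta}c)(\rotA{\theta}x,\dots)=c(x,\dots)$, you transport the reflection by conjugation instead. You also explicitly check that $\rotC{\theta}c$ stays positive definite and algebraically and analytically admissible (via Lemma~\ref{lem:P1_P2_vanish_is_rotationally_invariant} and the Christoffel identity), a step the paper's proof leaves implicit.
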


\begin{proof}
If $c$ is isotropic there is nothing to prove. Thus, it suffices to assume that $c$ is non-isotropic. Therefore, Lemma~\ref{lem:fixed_axes} yields that $c$ has exactly two symmetry axes. Particularly, there are orthogonal unit vectors $\{v,w\}$ such that $c(v,v,w,v)=c(w,w,v,w)=0$. We choose $\theta \in [0,2\pi)$ such that $\rotA{\theta}$ is the rotation that takes $v$ to $e_1$ and $w=e_2$. We denote $x^\theta=\rotA{\theta} x$ for $x \in \R^2$. 
Since for all $x,y,z,w \in \R^2$ we have that
\begin{equation}
(\rotC{\theta}c)(x^\theta,y^\theta,z^\theta,w^\theta)
=
c(x,y,z,w)
\end{equation}
we get
\begin{equation}
(\rotC{\theta}c) (e_1,e_1,e_2,e_1)
=
c(v,v,w,v)
=0,
\end{equation}
and
\begin{equation}
(\rotC{\theta}c) (e_2,e_2,e_1,e_2)
=
c(w,w,v,w)
=0.
\end{equation}
Hence, $(\rotC{\theta}c)_{13}=(\rotC{\theta}c)_{23}=0$.

Since, $c$ is algebraically admissible and $(\rotC{\theta}c)_{13}=(\rotC{\theta}c)_{23}=0$ we get from Lemma~\ref{lma:special-characterization} that $D(\rotC{\theta}c) =\bigl((\rotC{\theta}c)_{12}+(\rotC{\theta}c)_{33}\bigr)^{2}$. If $c$ is analytically admissible we get from Lemma~\ref{lma:special-characterization-analytic} that in addition $D(\rotC{\theta}c)\neq 0$.
\end{proof}

\section{Characterization of anomalous companions with the coordinate axes as symmetry axes}
\label{sec:CharacSymmetry}
Due to Lemma~\ref{lem:rotated_tensors} we can without loss of generality assume in this section that all positive definite and algebraically admissible stiffness tensors have coordinate axes as their symmetry axes. That is we assume that the equations~\eqref{eq:fixed_axes} and~\eqref{eq:Dcond} are true. In other words, throughout this section we have
\begin{equation}
\label{eq:cparamcoord}
    c=\begin{bmatrix}
        c_{11} & c_{12}& 0\\
        &c_{22} & 0\\
        && c_{33}\\
    \end{bmatrix},
\end{equation}
where
\begin{equation}
    D\coloneqq (c_{11}-c_{33})(c_{22}-c_{33}),
\end{equation}
satisfies
\begin{equation}
    D=(c_{12}+c_{33})^2.
\end{equation}

We observe, that for stiffness tensors of the form~\eqref{eq:cparamcoord} positive definiteness in Lemma~\ref{lem:posdef} is equivalent to:
\begin{lemma}
\label{lem:posdefcond}
A stiffness tensor $c$ of the form~\eqref{eq:cparamcoord} is positive definite if and only if
\begin{equation}
\label{eq:pos_def_cond}
c_{11},
c_{22},
c_{33},
c_{11}c_{22}-c_{12}^2
>0
. 
\end{equation}
\end{lemma}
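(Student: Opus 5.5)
The plan is to reduce the claim to Lemma~\ref{lem:posdef} and then apply Sylvester's criterion to the Voigt matrix, using its block structure. By Lemma~\ref{lem:posdef}, $c$ is positive definite exactly when the symmetric $3\times3$ Voigt matrix~\eqref{eq:Voigt-matrix} is positive definite. For a tensor of the form~\eqref{eq:cparamcoord}, with $c_{13}=c_{23}=0$, this matrix is block diagonal:
\begin{equation}
\begin{bmatrix}
c_{11} & c_{12} & 0\\
c_{12} & c_{22} & 0\\
0 & 0 & c_{33}
\end{bmatrix}
=
\begin{bmatrix}
c_{11} & c_{12}\\
c_{12} & c_{22}
\end{bmatrix}
\oplus
\begin{bmatrix}
c_{33}
\end{bmatrix}.
\end{equation}

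A block diagonal symmetric matrix is positive definite if and only if each block is. For the $1\times1$ block this says exactly $c_{33}>0$.

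For the $2\times2$ block we use the standard criterion: a symmetric $2\times 2$ matrix is positive definite if and only if its upper-left entry and its determinant are positive. This gives $c_{11}>0$ and $c_{11}c_{22}-c_{12}^2>0$.

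Finally, these two conditions force $c_{22}>0$. Indeed, $c_{11}c_{22}>c_{12}^2\ge0$ together with $c_{11}>0$ gives $c_{22}>0$. Conversely, $c_{22}>0$ is necessary, since it is the value of the quadratic form at $e_2$. So listing $c_{22}>0$ among the conditions in~\eqref{eq:pos_def_cond} is harmless: the list is equivalent to positive definiteness.

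The admissibility relation $D=(c_{12}+c_{33})^2$ is not used anywhere. There is no real obstacle here; the only point to be careful about is that the block decomposition really does give an equivalence in both directions.
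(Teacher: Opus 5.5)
Your proposal is correct and is essentially the paper's argument. The paper applies Sylvester's criterion directly to the $3\times 3$ Voigt matrix, whose leading minors are $c_{11}$, $c_{11}c_{22}-c_{12}^2$ and $c_{33}(c_{11}c_{22}-c_{12}^2)$. Your block-diagonal splitting is the same computation, and you additionally make explicit why the redundant condition $c_{22}>0$ is harmless, a point the paper leaves implicit.
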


\begin{proof}
By Sylvester's criterion a $3\times 3$ matrix is positive definite if and only if all the leading minors
$c_{11}$,
$c_{11}c_{22}-c_{12}^2$, and
$c_{33}(c_{11}c_{22}-c_{12}^2)$
of $c$ are positive.
These three conditions are equivalent to~\eqref{eq:pos_def_cond}.
\end{proof}

With this definition of positive definiteness in mind we are able to give analytical admissible stiffness tensors with the coordinate axes as symmetry axis a more concrete interpretaion through the following Lemma and subsequent observations:

\begin{lemma}\label{rem:discond}
    A positive definite stiffness tensor $c$ that satisfies~\eqref{eq:fixed_axes} and~\eqref{eq:Dcond} is analytically admissible if and only if
    \begin{equation}
    \label{eq:major_minor_axis_vs_radius_cond}
        \min(c_{11},c_{22})>c_{33}.
    \end{equation}
\end{lemma}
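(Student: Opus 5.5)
The plan is to reduce the statement to Lemma~\ref{lma:special-characterization-analytic} and then settle a purely algebraic sign question using the positivity conditions of Lemma~\ref{lem:posdefcond}. Under the standing assumptions ($c$ positive definite, $c_{13}=c_{23}=0$, and~\eqref{eq:Dcond}), Lemma~\ref{lma:special-characterization-analytic} says that $c$ is analytically admissible if and only if $D\neq 0$. It therefore suffices to show
\begin{equation*}
D\neq 0 \iff \min(c_{11},c_{22})>c_{33}.
\end{equation*}

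For the direction ``$\Leftarrow$'', if $c_{11}>c_{33}$ and $c_{22}>c_{33}$, then both factors of $D=(c_{11}-c_{33})(c_{22}-c_{33})$ are positive. Hence $D>0$, and in particular $D\neq 0$.

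For the direction ``$\Rightarrow$'', assume $D\neq 0$.
\begin{itemize}
\item By~\eqref{eq:Dcond}, $D=(c_{12}+c_{33})^2$, so $D>0$. Thus $c_{11}-c_{33}$ and $c_{22}-c_{33}$ are nonzero and have the same sign.
\item It remains to exclude the case where both are negative. Suppose it occurs, and set $a\coloneqq c_{33}-c_{11}>0$ and $b\coloneqq c_{33}-c_{22}>0$.
\item Then~\eqref{eq:Dcond} reads $ab=(c_{12}+c_{33})^2$, so $c_{12}=-c_{33}+\varepsilon\sqrt{ab}$ for some sign $\varepsilon\in\{\pm1\}$.
\item Expanding $c_{11}c_{22}=(c_{33}-a)(c_{33}-b)$ and $c_{12}^2$ gives
\begin{equation*}
c_{11}c_{22}-c_{12}^2=-c_{33}(a+b)+2\varepsilon c_{33}\sqrt{ab}=-c_{33}\bigl(\sqrt a-\varepsilon\sqrt b\bigr)^2 .
\end{equation*}
\item Since $c_{33}>0$ by Lemma~\ref{lem:posdefcond}, the right-hand side is $\le 0$. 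This contradicts the condition $c_{11}c_{22}-c_{12}^2>0$ of Lemma~\ref{lem:posdefcond}.
\end{itemize}
Hence both factors are positive, which is exactly~\eqref{eq:major_minor_axis_vs_radius_cond}.

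The only real obstacle is this last exclusion of the ``both negative'' sign case. Geometrically, it is the statement that the ellipse cannot enclose the circle. The computation above handles it by completing a square, and I expect it to work as written. As a fallback, one could argue via the Christoffel matrix at $\xi=e_1$ and $\xi=e_2$, whose eigenvalues are $\{c_{11},c_{33}\}$ and $\{c_{22},c_{33}\}$ respectively. This identifies $c_{33}$ as the circle's parameter and $c_{11},c_{22}$ as the ellipse's, but the algebraic route is shorter and self-contained.
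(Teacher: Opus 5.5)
Your proof is correct and follows essentially the same route as the paper: you reduce to $D\neq 0$ via Lemma~\ref{lma:special-characterization-analytic}, then exclude the case $c_{11},c_{22}<c_{33}$ by showing $c_{11}c_{22}-c_{12}^2\le 0$, which contradicts Lemma~\ref{lem:posdefcond}. Your identity $c_{11}c_{22}-c_{12}^2=-c_{33}(\sqrt a-\varepsilon\sqrt b)^2$ is slightly cleaner than the paper's AM--GM bound, whose final ``$=0$'' tacitly takes $c_{12}+c_{33}=+\sqrt{ab}$; the other sign gives a strictly negative value, so the paper's conclusion still holds.
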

\begin{proof}
Since $c$ satisfies~\eqref{eq:fixed_axes} and~\eqref{eq:Dcond} it follows from  Lemma \ref{lma:special-characterization-analytic} that $c$ is analytically admissible if and only if $D\neq 0$. If the condition~\eqref{eq:major_minor_axis_vs_radius_cond} holds then equation~\eqref{eq:Deq} yields $D>0$. 

Thus, for the rest of the proof we assume that $D\neq 0$. Due to $D=(c_{12}+c_{33})^2$ it follows $D>0$. Also, the numbers $c_{11}-c_{33}$ and $c_{22}-c_{33}$ have the same sign. If the sign is positive the estimate~\eqref{eq:major_minor_axis_vs_radius_cond} follows immediately. To finish the proof we exclude the case of negative sign by showing that this violates the positive definiteness of $c$. Particularly,  $c_{11}c_{22}-c_{12}^2>0$ cannot hold. 

We denote:
\begin{equation}
a= c_{33}-c_{11}, \quad \text{and} \quad b=c_{33}-c_{22}.
\end{equation}
We use~\eqref{eq:Deq}, and $D=ab$ with the assumptions $a,b>0$ to obtain
\begin{align}
    c_{11}c_{22}-c_{12}^2&=(c_{33}-a)(c_{33}-b)-c_{12}^2
    \\
    &=c_{33}^2+ab-c_{33}(a+b)-c_{12}^2\\
&=c_{33}(2(c_{33}+c_{12})-(a+b))
\\
&\leq c_{33}(2(c_{33}+c_{12})-2\sqrt{ab})
\\
&= 0.
\end{align}
This is a contradiction.
\end{proof}

\begin{lemma}\label{lem:cct}
A positive definite stiffness tensor that satisfies~\eqref{eq:fixed_axes} and~\eqref{eq:Dcond} is given by one of the following forms:  
    \begin{equation}\label{eq:cparam}
    c_+=\begin{bmatrix}
        c_{11} & -c_{33}+\sqrt{D} & 0\\
        &c_{22} & 0\\
        && c_{33}\\
    \end{bmatrix}
\end{equation}
or
\begin{equation}\label{eq:sibling}
    c_-=\begin{bmatrix}
        c_{11} & -c_{33}-\sqrt{D} & 0\\
        &c_{22} & 0\\
        && c_{33}\\
    \end{bmatrix},
\end{equation}
where $D$ is as in~\eqref{eq:Deq}.

Furthermore, we have the following properties:
\begin{itemize}
    \item The maps $\psi_+:c_+ \mapsto (g_p,g_s) $ and $\psi_-:c_- \mapsto (g_p,g_s)$ in Proposition~\ref{prop:components+psismooth} are defined explicitly as follows:
    \begin{equation}\label{eq:psidef}
    \psi_\pm(c_\pm)
    =(g_p,g_s)=
    \left(\begin{bmatrix}
        c_{11}^{-1} & 0\\
        0 & c_{22}^{-1}
    \end{bmatrix}, c_{33}^{-1}\begin{bmatrix}
        1 & 0\\
        0 & 1
    \end{bmatrix}\right).
    \end{equation}
        \item The tensors $c_+$ and $c_-$ share the same slowness surface and are thus
    anomalous companions. Their polarization vectors are related by the reflection
    $\reflA{0}$ across the $x$ axis, equivalently by $\reflA{\pi/2}$ across the $y$ axis.
    \item The determinants of the Voigt matrices satisfy
    \begin{equation}\label{eq:detcpm}
        \det(c_\pm)=c_{33}^{2}\bigl(c_{11}+c_{22}-2c_{33}\pm2\sqrt{D}\bigr),
    \end{equation}
    so that $\det(c_+)>\det(c_-)$ whenever $D\neq0$.
\end{itemize}
\end{lemma}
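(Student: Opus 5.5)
The plan is to solve the constraint \eqref{eq:Dcond} for $c_{12}$ and then compute the slowness polynomial explicitly in the diagonal frame. First, under \eqref{eq:fixed_axes} and \eqref{eq:Dcond} we have $(c_{12}+c_{33})^2=D$. Positivity of $c_{11},c_{22},c_{33}$ from Lemma~\ref{lem:posdefcond} does not by itself give $D\ge 0$, but \eqref{eq:Dcond} forces $D$ to be a square, hence $D\ge0$, so $\sqrt{D}$ is real. Then $c_{12}+c_{33}=\pm\sqrt D$, which is exactly the dichotomy \eqref{eq:cparam}/\eqref{eq:sibling}.

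Next I compute the Christoffel matrix for $c$ of the form \eqref{eq:cparamcoord}. In Voigt terms,
$\Gamma=\begin{bmatrix} c_{11}\xi_1^2+c_{33}\xi_2^2 & (c_{12}+c_{33})\xi_1\xi_2\\ (c_{12}+c_{33})\xi_1\xi_2 & c_{33}\xi_1^2+c_{22}\xi_2^2\end{bmatrix}$.
Write $\Gamma-I=\begin{bmatrix}A&B\\B&C\end{bmatrix}$. The determinant $AC-B^2$ depends on $c_{12}$ only through $(c_{12}+c_{33})^2=D$, so $\spoly{c_+}=\spoly{c_-}$ immediately, and $c_\pm$ are anomalous companions. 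Using $D=(c_{11}-c_{33})(c_{22}-c_{33})$, I expand
\[
AC-B^2=(c_{11}\xi_1^2+c_{33}\xi_2^2-1)(c_{33}\xi_1^2+c_{22}\xi_2^2-1)-(c_{11}-c_{33})(c_{22}-c_{33})\xi_1^2\xi_2^2 .
\]
A direct check should show this equals $(c_{33}|\xi|^2-1)(c_{11}\xi_1^2+c_{22}\xi_2^2-1)$: the $\xi_1^4,\xi_2^4$, quadratic and constant terms match, and the $\xi_1^2\xi_2^2$ coefficient is $c_{11}c_{22}+c_{33}^2-(c_{11}-c_{33})(c_{22}-c_{33})=c_{33}(c_{11}+c_{22})$ on both sides. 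Hence the slowness surface is the circle $|\xi|^2=c_{33}^{-1}$ and the ellipse $c_{11}\xi_1^2+c_{22}\xi_2^2=1$. These are the unit cospheres of the metrics whose dual forms are $c_{33}I$ and $\operatorname{diag}(c_{11},c_{22})$, so the metrics themselves are $c_{33}^{-1}I$ and $\operatorname{diag}(c_{11}^{-1},c_{22}^{-1})$. Under analytic admissibility, Lemma~\ref{rem:discond} gives $c_{11},c_{22}>c_{33}$, so the ellipse lies inside the circle, which means it is the faster $p$-sheet. This yields \eqref{eq:psidef}.

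For the polarizations, the eigenvectors of $\Gamma$ for $c_\pm$ differ only in the sign of the off-diagonal entry $B=\pm\sqrt D\,\xi_1\xi_2$. Conjugating by $\operatorname{diag}(1,-1)=\reflA{0}$ flips that sign, so the eigenvectors are related by $\reflA{0}$, or equivalently by $-\reflA{0}=\reflA{\pi/2}$. Finally, for the determinant, $\det c=c_{33}(c_{11}c_{22}-c_{12}^2)$ with $c_{12}=-c_{33}\pm\sqrt D$, which gives $c_{12}^2=c_{33}^2\mp2c_{33}\sqrt D+D$. Substituting $D=c_{11}c_{22}-c_{33}(c_{11}+c_{22})+c_{33}^2$ yields $c_{11}c_{22}-c_{12}^2=c_{33}(c_{11}+c_{22}-2c_{33}\pm2\sqrt D)$, which is \eqref{eq:detcpm}. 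Since $c_{33}>0$, the inequality $\det c_+>\det c_-$ holds when $D>0$.

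The main obstacle is the factorization identity, together with keeping straight that $\psi$ maps to metrics rather than to cometrics, i.e.\ that inverses of the cometric coefficients appear. Everything else is substitution.
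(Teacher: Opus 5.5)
Your proposal is correct and follows essentially the same route as the paper. You solve \eqref{eq:Dcond} for $c_{12}$, compute the Christoffel matrix, and identify the circle and ellipse by factoring $\det(\Gamma-I)$ where the paper simply states the eigenvalues; you then conjugate by $\reflA{0}$ for the polarizations and compute the block-diagonal determinant, and your explicit appeal to Lemma~\ref{rem:discond} to decide which sheet is the $p$-sheet is a small clarification the paper leaves implicit.
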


\begin{proof}
The representations~\eqref{eq:cparam} and~\eqref{eq:sibling} follow directly from~\eqref{eq:fixed_axes} and~\eqref{eq:Dcond}. Note that $D$ is non-negative such that both $c_+$ and $c_-$ give rise to real stiffness tensors.
By a direct computation the associated Christoffel matrices are given by
\begin{align}
    \Gamma_{c_{\pm}}(\xi_1,\xi_2)&=\begin{bmatrix}
        c_{11}\xi_1^2+c_{33}\xi_2^2 & \pm \sqrt{D}\xi_1\xi_2\\
        \pm \sqrt{D}\xi_1 \xi_2 & c_{33}\xi_1^2+c_{22}\xi_2^2
    \end{bmatrix}.
\end{align}
The eigenvalues of the Christoffel matrix of $c_+$ and $c_-$ are identical:
\begin{equation}
    \label{eq:eigenvalues}
    \lambda_{c_\pm,1}(\xi_1,\xi_2)=c_{11}\xi_1^2+c_{22}\xi_2^2,
    \quad \text{and} \quad 
    \lambda_{c_\pm,2}(\xi_1,\xi_2)=c_{33}(\xi_1^2+\xi_2^2).
\end{equation}
corresponding to the metrics $g_s=c_{33}^{-1}\begin{bmatrix}
        1 & 0\\
        0 & 1
\end{bmatrix}$ and $g_p=\begin{bmatrix}
        c_{11}^{-1} & 0\\
        0 & c_{22}^{-1}
\end{bmatrix}$. These are positive definite following from Lemma~\ref{lem:posdefcond} as $c$ is positive definite.
$c_+$ and $c_-$ share the same slowness surface:
\begin{equation}
\Sigma_{c_\pm}:=\{\xi\in \R^2: \lambda_{c_\pm,1}(\xi_1,\xi_2)=1, \quad \text{or} \quad  \lambda_{c_\pm,2}(\xi_1,\xi_2)=1\}
\end{equation}
and are therefore by definition anomalous companions. Furthermore, because
\begin{equation}
\Gamma_{c_{-}}(\xi_1,\xi_2)
=
\reflA{0}\,\Gamma_{c_{+}}(\xi_1,\xi_2)\,\reflA{0}^{-1},
\end{equation}
the two Christoffel matrices have the same eigenvalues, while $v$ is an eigenvector of
$\Gamma_{c_{+}}(\xi)$ if and only if $\reflA{0}v$ is one of $\Gamma_{c_{-}}(\xi)$ for the
same eigenvalue. Since $\reflA{\pi/2}=-\reflA{0}$, the identity holds verbatim with
$\reflA{\pi/2}$ in place of $\reflA{0}$, and the two reflections act identically on
polarization directions.

For~\eqref{eq:detcpm} we use that the Voigt matrix of $c_\pm$ is block diagonal, so
$\det(c_\pm)=c_{33}\bigl(c_{11}c_{22}-c_{12}^{2}\bigr)$. Writing
$a\coloneqq c_{11}-c_{33}$ and $b\coloneqq c_{22}-c_{33}$, so that $D=ab$ and
$c_{12}=-c_{33}\pm\sqrt{ab}$, a direct computation gives
$c_{11}c_{22}-c_{12}^{2}=c_{33}\bigl(a+b\pm2\sqrt{ab}\bigr)$, which is
\eqref{eq:detcpm}. The two values differ by $4c_{33}^{2}\sqrt{D}$, which is positive
when $D\neq0$ since $c_{33}>0$ by Lemma~\ref{lem:posdefcond}.
\end{proof}

With this explicit definition of the map $\psi$ we make the following sanity check of analytic admissibility as in Definition~\ref{def:admissible}:
\begin{remark}
    If $c$ is analytically admissible the condition~\eqref{eq:major_minor_axis_vs_radius_cond} is equivalent to that the metrics $g_s$ and $g_p$ in~\eqref{eq:psidef} corresponding to circle and ellipse in the slowness surface do not meet.
\end{remark}

Additionally, we observe the following for the symmetry axes of $c_+$ and $c_-$:
\begin{remark}
By Lemma~\ref{lem:fixed_axes} the coordinate axes $L^0$ and $L^{\pi/2}$ are the symmetry
axes of both $c_+$ and $c_-$, and by
Corollary~\ref{cor:symmetry_axis_of_c_and_sigma_agree} they are also the symmetry axes of
the slowness surface that the two tensors share. The polarizations of $c_+$ and $c_-$ are
related by a reflection in these very axes.
\end{remark}

In the next lemma we show that all non-isotropic analytically admissible stiffness tensors have exactly one positive definite anomalous companion. Meanwhile, isotropic tensors have no positive definite anomalous companions. 

\begin{lemma}[Anomalous companions with fixed symmetry axes] \label{lem:anIsoAC}
Let $c$ be an analytically admissible stiffness tensor that satisfies~\eqref{eq:fixed_axes}, \eqref{eq:Dcond} and $D\neq 0$.
\begin{enumerate}
    \item If $c$ is not isotropic it has a unique positive definite anomalous companion that satisfies~\eqref{eq:fixed_axes}, \eqref{eq:Dcond} and $D\neq 0$.
    \item If $c$ is isotropic it has no positive definite anomalous companions. However, it has two unique non-isotropic positive semi-definite and a unique isotropic but indefinite anomalous companion that satisfy~\eqref{eq:fixed_axes}, \eqref{eq:Dcond} and $D\neq 0$.
\end{enumerate}

\end{lemma}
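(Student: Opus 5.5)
The plan is to use the explicit parametrization of Lemma~\ref{lem:cct}: within the class \eqref{eq:fixed_axes}, \eqref{eq:Dcond}, the entries $c_{11},c_{22},c_{33}$ determine $c_{12}$ up to the sign choice $c_{12}=-c_{33}\pm\sqrt{D}$. Any companion $c'$ in this class must share the slowness surface of $c$, i.e.\ the circle of radius $c_{33}^{-1/2}$ and the ellipse with semi-axes $c_{11}^{-1/2},c_{22}^{-1/2}$ along the coordinate axes, by \eqref{eq:eigenvalues}. I would first argue that matching these two conics forces the unordered pair $\{\lambda_1,\lambda_2\}$ of \eqref{eq:eigenvalues} to agree. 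This gives two options: either $(c'_{11},c'_{22},c'_{33})=(c_{11},c_{22},c_{33})$, or the roles of circle and ellipse are swapped. The swap is only possible when both branches are circles, i.e.\ when $c_{11}=c_{22}$.

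For part (1), assume $c$ is not isotropic. By Proposition~\ref{prop:two-circles} the ellipse is then not a circle, so $c_{11}\neq c_{22}$ and no swap is possible. Hence $c'$ has the same diagonal entries and $c'\in\{c_+,c_-\}$. Since $D\neq0$, these two differ, so the companion is the other sign choice. Both signs give positive definite tensors with the same $D\neq0$:
\begin{itemize}
\item by Lemma~\ref{rem:discond}, $\min(c_{11},c_{22})>c_{33}$ holds for $c$ and this condition involves only the diagonal, so $a=c_{11}-c_{33}>0$ and $b=c_{22}-c_{33}>0$;
\item \eqref{eq:detcpm} gives $c_{11}c_{22}-c_{12}^2=c_{33}(\sqrt a\pm\sqrt b)^2$, which is positive for both signs when $a\neq b$;
\item Lemma~\ref{lem:posdefcond} then yields positive definiteness.
\end{itemize}
Here $a\neq b$ is exactly non-isotropy. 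Uniqueness follows since only two sign choices exist.

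For part (2), let $c$ be isotropic, so $c_{11}=c_{22}$, $a=b$, and $c=c_+$ with $c_{12}=c_{11}-2c_{33}$. The unswapped companion is $c_-$, for which $c_{11}c_{22}-c_{12}^2=0$. This is the semidefinite tensor \eqref{eq:circlesemi}. In the swapped case $c'_{11}=c'_{22}=c_{33}$ and $c'_{33}=c_{11}$, and the two sign choices give \eqref{eq:isoparamflipped} (isotropic, indefinite) and \eqref{eq:circlesemi2} (semidefinite), matching Remark~\ref{rem:Isoparam}. Among companions of the form~\eqref{eq:isoslow2} with $c_{13}=0$, Lemma~\ref{lem:rotation-orbit} shows that only these are axis-aligned. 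Within this class, then, the companions are exactly one indefinite isotropic tensor and two semidefinite ones, and none is positive definite. Each has $D'=(c_{11}-c_{33})^2\neq0$, because analytic admissibility gives $c_{11}>c_{33}$.

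The main obstacle is the first step. Equal slowness surfaces must be shown to force equality of the unordered pair of quadratic forms. I would argue this via unique factorization of the slowness polynomial $P_c$ into its two conic factors, normalized with constant term $-1$. The swap case and the sign of $\sqrt{D}$ need care. Note that $D$ is symmetric under the swap of $c_{11}$ with $c_{33}$ when $c_{11}=c_{22}$.
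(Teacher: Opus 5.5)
Your proposal is correct and follows essentially the same route as the paper. Both use the explicit form of Lemma~\ref{lem:cct}, match the diagonal entries via~\eqref{eq:eigenvalues}, reduce to the sign of $\sqrt{D}$, and check positivity through $c_{33}(\sqrt{a}\pm\sqrt{b})^2$, where $a\neq b$ exactly in the non-isotropic case. The one difference is the circle--ellipse swap: you handle it explicitly by matching normalized conic factors, while the paper asserts $b_{ii}=c_{ii}$ directly and recovers the swapped companions in part~(2) from Remark~\ref{rem:Isoparam} and Lemma~\ref{lem:rotation-orbit}, so your treatment is slightly more careful.
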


\begin{proof}
By Lemma~\ref{lem:cct} $c$ can be parameterized by $c_+$ \eqref{eq:cparam} or else $c_-$ \eqref{eq:sibling}. Furthermore, $c_+$ and $c_-$ are anomalous companions. Suppose that a positive definite tensor $b$ that satisfies~\eqref{eq:fixed_axes} and~\eqref{eq:Dcond} with $D\neq 0$ is an anomalous companion of $c$. 
By~\eqref{eq:eigenvalues} the eigenvalues of the Christoffel matrix corresponding to $c\in \{c_+,c_-\}$ are 
\begin{equation}
    \lambda_{q,1}(\xi_1,\xi_2)=q_{11}\xi_1^2+q_{22}\xi_2^2,
    \quad \text{and} \quad 
    \lambda_{q,2}(\xi_1,\xi_2)=q_{33}(\xi_1^2+\xi_2^2).
\end{equation}
Since $\spoly{c}=\spoly{b}$ we see immediately from the eigenvalues that $c_{ii}=b_{ii}$ for all $i\in\{1,2,3\}$. Therefore, $b\in \{c_+,c_-\}$, and we have shown that the only possible positive definite anomalous companion of $c$ is $c_{\pm}$. For the rest of the proof we let $c= c_{\pm}$ and $b=c_\mp$.

\medskip

Proof of (1): As the non-isotropic $c$ is positive definite Lemma~\ref{lem:posdefcond} gives $c_{11}, c_{22}, c_{33}>0$. Since $b_{ii}=c_{ii}$ for all $i\in\{1,2,3\}$ it follows that $b_{ii}>0$, and in light of Lemma~\ref{lem:posdefcond} it remains to show that $b_{11}b_{22}-(b_{12})^2>0$. 

Depending on whether $b$ is parameterized by~\eqref{eq:cparam} or~\eqref{eq:sibling} the proof boils down to establishing
\begin{align}
    b_{11}b_{22}-(-b_{33}+\sqrt{(b_{11}-b_{33})(b_{22}-b_{33})})^2&>0 \label{eq:param1}
    \\
    \intertext{or}
    b_{11}b_{22}-(-b_{33}-\sqrt{(b_{11}-b_{33})(b_{22}-b_{33})})^2&>0. \label{eq:param2}
\end{align}

First we suppose that $b$ is parameterized by~\eqref{eq:cparam} and show that the condition~\eqref{eq:param1} holds true. Since  $D$ is non-negative
the terms $c_{11}-c_{33}$ and $c_{22}-c_{33}$ have the same sign. If all $c_{ii}$ would coincide then $D=0$ and $c$ would be of the form:
\begin{equation}
        c=\begin{bmatrix}
        c_{22} & -c_{22} & 0\\
        -c_{22}&c_{22} & 0\\
        0& 0& c_{22}\\
    \end{bmatrix},
\end{equation}
corresponding to the stiffness tensor in~\eqref{eq:isocoincide}, where the slowness surface is the union of two circles that coincide and contradicting positive definiteness of $c$. Hence, $c_{11}, \:c_{22}$ and $c_{33}$ cannot all agree and it holds that $(\sqrt{c_{11}-c_{33}}+\sqrt{c_{22}-c_{33}})^2>0$. Thus, the condition~\eqref{eq:param1} follows by a direct computation with the fact $b_{ii}=c_{ii}>0$.  

If $b$ is given by~\eqref{eq:sibling} we verify that the condition~\eqref{eq:param2} holds true. If the expression
\begin{equation}
\label{eq:pos_condition_2}
(\sqrt{c_{11}-c_{33}}-\sqrt{c_{22}-c_{33}})^2
\end{equation}
vanishes then $c_{11}=c_{22}$, and in the current case $c$ would correspond to the parameterization~\eqref{eq:cparam} and be of the form:
\begin{equation}
        c=\begin{bmatrix}
        c_{22} & c_{22}-2c_{33} & 0\\
        c_{22}-2c_{33} &c_{22} & 0\\
        0 & 0 & c_{33}\\
    \end{bmatrix}.
\end{equation}
This is exactly the parameterization of an isotropic stiffness tensor in~\eqref{eq:isoslow1} that can be parameterized by the Lamé parameters in~\eqref{eq:isoparam} or~\eqref{eq:isoparamflipped} respectively. This contradicts the assumption that $c$ is not isotropic.
Thus, the expression~\eqref{eq:pos_condition_2} is strictly positive and~\eqref{eq:param2} follows by a direct computation with the fact $b_{ii}=c_{ii}>0$.

\medskip
Proof of (2): Since $c$ is isotropic and positive definite it follows from Remark~\ref{rem:Isoparam} that it can be parameterized by 
\begin{equation}\label{eq:isoreference}
        c=\begin{bmatrix}
        \lambda + 2\mu & \lambda & 0\\
        \lambda & \lambda + 2 \mu & 0\\
        0 & 0 & \mu
    \end{bmatrix}
\end{equation}
with $\mu,\lambda+\mu>0$. Since $c_{12}=-c_{33}\pm \sqrt{D}$ and $D= (\lambda+\mu)^2$ this parameterization corresponds to $c_+$ as in~\eqref{eq:cparam}. Therefore, the parameterization $c_-$ \eqref{eq:sibling} looks like
\begin{equation}\label{eq:isoreferencesemi}
    b=\begin{bmatrix}
        \lambda + 2\mu & -\lambda-2\mu & 0\\
        -\lambda-2\mu & \lambda + 2 \mu & 0\\
        0 & 0 & \mu
    \end{bmatrix}.
\end{equation}
This is the non-isotropic and positive semi definite stiffness tensor as stated in~\eqref{eq:circlesemi}.

Remark \ref{rem:Isoparam} implies that the following isotropic stiffness tensor through a linear transformation of the Lamé parameters gives rise to the same slowness surface as $c$ in~\eqref{eq:isoreference}:
\begin{equation}\label{eq:isoreference2}
c_2=\begin{bmatrix}
\mu & -2\lambda-3\mu & 0\\
-2\lambda-3\mu & \mu & 0\\
0 & 0 & \lambda+2\mu
\end{bmatrix}.
\end{equation}
Since $c_{12}=-c_{33}\pm \sqrt{D}$ and $D= (-\lambda-\mu)^2$ this parameterization corresponds to $c_-$ \eqref{eq:cparam}. Therefore, the parameterization $c_+$ \eqref{eq:sibling} looks like
\begin{equation}\label{eq:isoreferencesemi2}
    b_2=\begin{bmatrix}
        \mu & -\mu & 0\\
        -\mu & \mu & 0\\
        0 & 0 & \lambda + 2\mu
    \end{bmatrix}.
\end{equation}
This is the non-isotropic stiffness tensor in~\eqref{eq:circlesemi2} that is positive semi definite. By Lemma \ref{lem:rotation-orbit} it follows that $b$ and $b_2$ are the only stiffness tensors in the fixed symmetry axis case that share the same slowness surface with the isotropic stiffness tensors $c$ and $c_2$.  Hence, $c, b, c_2$ and $b_2$ are the only stiffness tensors that satisfy~\eqref{eq:fixed_axes} and~\eqref{eq:Dcond} and whose slowness surface consists of two origocentric circles with radii $\frac{1}{\sqrt{\mu}}$ and $\frac{1}{\sqrt{\lambda+2\mu}}$ respectively.
\end{proof}

\begin{example}[Illustration of Lemma~\ref{lem:cct}]
    The following tensor is an example of a positive definite analytically admissible stiffness tensor:
    \begin{equation}\label{ex:c}
    c_+=\begin{bmatrix}
        7 & -2+\sqrt{10} & 0\\
        -2+\sqrt{10} & 4 & 0\\
        0 & 0 & 2
    \end{bmatrix},
\end{equation}
whose anomalous companion is given by
\begin{equation}\label{ex:ct}
    c_-=\begin{bmatrix}
        7 & -2-\sqrt{10} & 0\\
        -2-\sqrt{10} & 4 & 0\\
        0 & 0 & 2
        \end{bmatrix}.
\end{equation}
The slowness surfaces (same) as well as polarization vectors (reflections across $x$- and $y$-axis respectively) of $c_\pm$
are illustrated in Figure \ref{fig:Polarizations2}. 

\begin{figure}[h]
    \centering
    \begin{minipage}[t]{0.5\textwidth}
        \centering
        \includegraphics[width=0.8\textwidth]{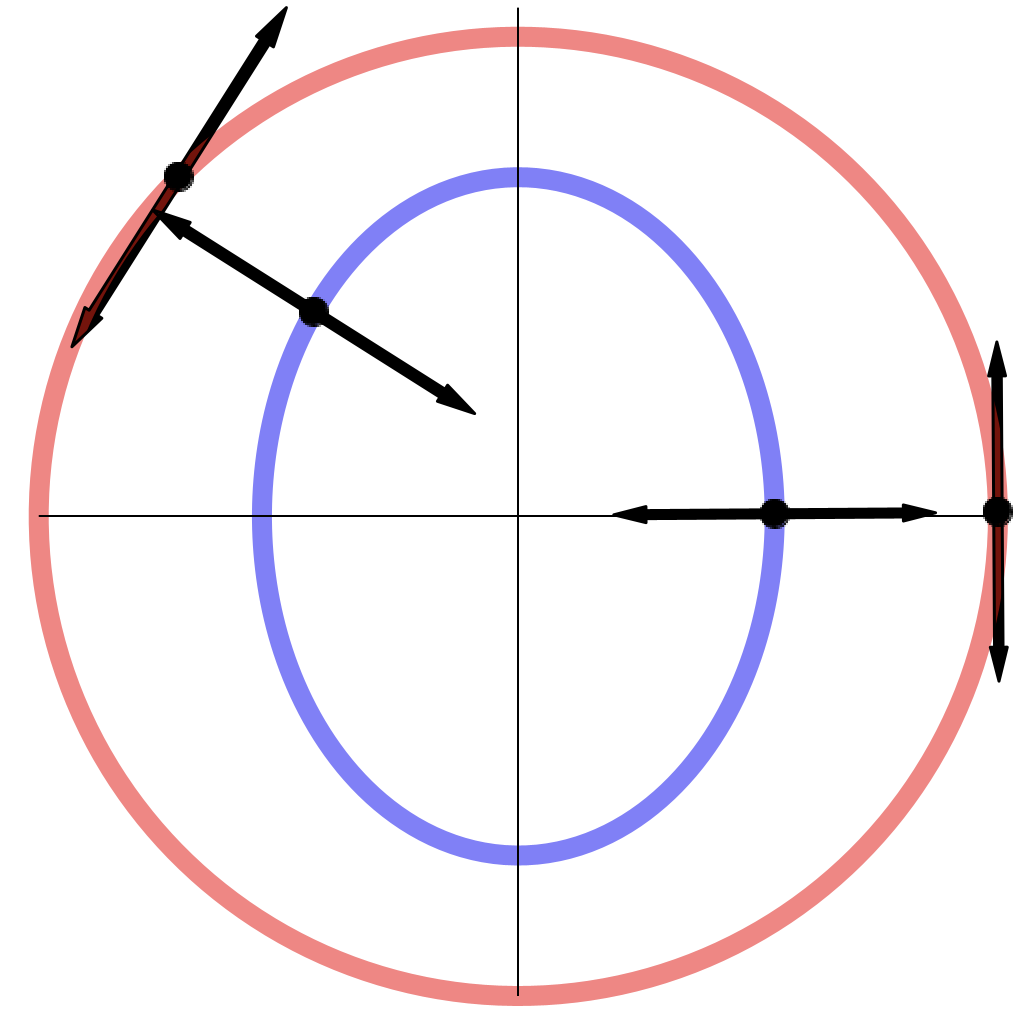}
    \end{minipage}%
    \begin{minipage}[t]{0.5\textwidth}
        \centering
        \includegraphics[width=0.75\textwidth]{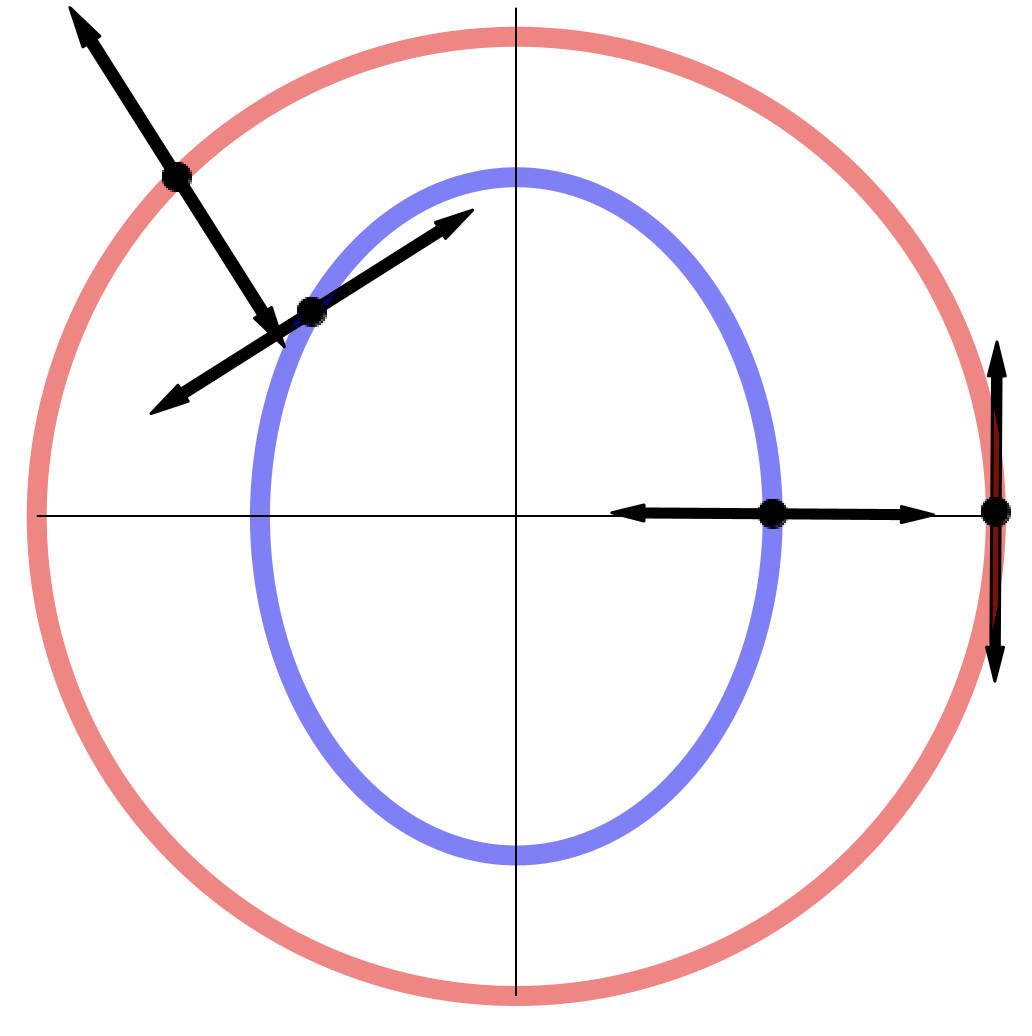}
    \end{minipage}
    \caption{The slowness surface corresponding to $c_+$ and $c_-$ in \eqref{ex:c}--\eqref{ex:ct} with two of their polarization vectors.}
    \label{fig:Polarizations2}
\end{figure}
\end{example}

\section{Anomalous companions and the two branches of $\psi$}\label{sec:psibranches}

Throughout this section $\analyticallyadmissibletensors$ denotes the set of analytically
admissible stiffness tensors and $\admissiblepairsofmetrics$ the set of pairs $(g_p,g_s)$
of Riemannian metrics for which $g_s$ is a positive multiple of the Euclidean one and
\begin{equation}\label{eq:setE}
    \abs{v}_{g_p}<\abs{v}_{g_s}
    \qquad\text{for all } v\in\R^2\setminus\{0\},
\end{equation}
that is, the $p$-wave speed exceeds the $s$-wave speed in every direction. The map
$\psi\colon\analyticallyadmissibletensors\to\admissiblepairsofmetrics$ sends a stiffness
tensor to its pair of wave speed metrics, $\psi(c)=(g_p,g_s)$.

In this section we generalize Lemma~\ref{lem:anIsoAC} to the setting without fixed symmetry axes and use the results from the previous two sections and this Lemma to prove surjectivity of the map $\psi$ (Proposition~\ref{prop:components+psismooth}(1)), that $\psi$ is 1-1 on the set of isotropic stiffness tensors and 2-1 outside it (Proposition~\ref{prop:components+psismooth}(5)) and that the set $\psi^{-1}(\admissiblepairsofmetrics \backslash \admissiblepairsofmetrics_-)\subset \analyticallyadmissibletensors$ coincides with the set of isotropic stiffness tensors (Proposition~\ref{prop:components+psismooth}(4)).

The following Lemma addresses how the slowness surface is affected by rotations of the stiffness tensor and which consequences this has for anomalous companions, analytic admissibility and the map $\psi$:
\begin{lemma}\label{lem:psi-equivariant}
    Let $c$ be an algebraically admissible stiffness tensor and $\theta\in[0,2\pi)$. Then
    \begin{equation}\label{eq:rotated_slowness_surface}
        \spoly{\rotC{\theta}c}(\xi)=\spoly{c}\bigl(\rotA{\theta}^{T}\xi\bigr)
        \qquad\text{for all }\xi\in\R^2 .
    \end{equation}
    Consequently:
    \begin{enumerate}
        \item two stiffness tensors are anomalous companions if and only if their rotations
              by $\theta$ are;
        \item $\rotC{\theta}c$ is algebraically admissible, and it is analytically
              admissible if and only if $c$ is;
        \item if $c\in\analyticallyadmissibletensors$, then
              $\psi(\rotC{\theta}c)=\rotB{\theta}\psi(c)$.
    \end{enumerate}
\end{lemma}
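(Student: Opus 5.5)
The identity \eqref{eq:rotated_slowness_surface} comes first; the three consequences then follow from it. Write $R=\rotA{\theta}$. By \eqref{eq:rotC}, the Christoffel matrix of $\rotC{\theta}c$ at $\xi$ is
\begin{equation*}
\Gamma_{\rotC{\theta}c}(\xi)_{mp}
=
c_{ijk\ell}R_{mi}R_{nj}R_{ok}R_{p\ell}\,\xi_n\xi_o
=
R_{mi}\,c_{ijk\ell}(R^T\xi)_j(R^T\xi)_k\,R_{p\ell}.
\end{equation*}
Hence $\Gamma_{\rotC{\theta}c}(\xi)=R\,\Gamma_c(R^T\xi)\,R^T$. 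Since $RIR^T=I$ and $\det R=\det R^T=1$, we get $\det(\Gamma_{\rotC{\theta}c}(\xi)-I)=\det(\Gamma_c(R^T\xi)-I)$, which is \eqref{eq:rotated_slowness_surface}. The same conjugation shows that the eigenvalues satisfy $\lambda_i^{\rotC{\theta}c}(\xi)=\lambda_i^c(R^T\xi)$. In particular, the slowness surface of $\rotC{\theta}c$ is the image $R\,\Sigma_c$ of the slowness surface of $c$.

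For (1): if $\spoly{c}=\spoly{b}$, then by the identity $\spoly{\rotC{\theta}c}=\spoly{\rotC{\theta}b}$. Also $\rotC{\theta}c\neq\rotC{\theta}b$, because $\rotC{\theta}$ is invertible. The converse follows by applying the same argument with $-\theta$, using the group property $\rotC{s}\rotC{\theta}=\rotC{s+\theta}$ that was already noted in the proof of Lemma~\ref{lem:P1_P2_vanish_is_rotationally_invariant}.

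For (2): positive definiteness is preserved by Lemma~\ref{lem:P1_P2_vanish_is_rotationally_invariant}. The rotation $R$ maps a centered circle to a centered circle of the same radius, maps a centered ellipse to a centered ellipse, and preserves disjointness. So $R\Sigma_c$ is a circle and an ellipse that meet if and only if those of $\Sigma_c$ meet. Alternatively, one can invoke Proposition~\ref{prop:alg-admissible} together with the linear transformation \eqref{eq:transformation_of_P1_P2}. The converse again uses $-\theta$.

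For (3): write $\psi(c)=(g_p,g_s)$. The $p$-sheet of $c$ is $\{\xi:\ g_p^{-1}(\xi,\xi)=1\}$, where $g_p^{-1}$ denotes the dual quadratic form; the $s$-sheet is defined analogously. The $p$-sheet of $\rotC{\theta}c$ is the set of $\xi$ with $R^T\xi$ on the $p$-sheet of $c$, so its dual form is $R\,g_p^{-1}R^T$. Inverting gives the metric $(R^T)^{-1}g_pR^{-1}=Rg_pR^T$, which is $\rotB{\theta}g_p$ by \eqref{eq:rotB}. For $g_s$, a multiple of the identity, this transformation leaves it unchanged, consistent with $\rotB{\theta}g_s=g_s$.

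The main point requiring care is identifying which sheet is the $p$-sheet and which is the $s$-sheet after rotation. I would settle this by noting that the circle maps to the circle, so the circle sheet is the $s$-sheet for both $c$ and $\rotC{\theta}c$. The only other thing to check is the covector-versus-vector convention, namely that $\xi\mapsto R\xi$ on covectors corresponds to conjugating the metric by $R$. This holds because $R$ is orthogonal, so $(R^T)^{-1}=R$.
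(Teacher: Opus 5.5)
Your proof is correct and follows essentially the paper's route: both start from the conjugation identity $\Gamma_{\rotC{\theta}c}(\xi)=\rotA{\theta}\,\Gamma_c(\rotA{\theta}^{T}\xi)\,\rotA{\theta}^{T}$ and read off (1)--(3) from it, with (3) obtained by conjugating the dual quadratic forms. The one difference is that you take $\det(\Gamma-I)$ directly instead of going through the factorization into $\eta^Tg_p^{-1}\eta-1$ and $\eta^Tg_s^{-1}\eta-1$. This gives the identity for every stiffness tensor, so applying it to an arbitrary companion $b$ in (1) needs no admissibility assumption.
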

\begin{proof}
    For every $\xi\in\R^2$, writing $\eta\coloneqq\rotA{\theta}^{T}\xi$,
    \begin{align}
        \Gamma_{\rotC{\theta}c}(\xi)_{mp}
        &=(\rotC{\theta}c)_{mnop}\,\xi_n\xi_o\\
        &=c_{ijk\ell}\,\rotA{\theta}_{mi}\rotA{\theta}_{p\ell}
         \bigl(\rotA{\theta}_{nj}\xi_n\bigr)\bigl(\rotA{\theta}_{ok}\xi_o\bigr)\\
        &=\rotA{\theta}_{mi}\,\Gamma_{c}(\eta)_{i\ell}\,\rotA{\theta}_{p\ell},
    \end{align}
    that is,
    $\Gamma_{\rotC{\theta}c}(\xi)=\rotA{\theta}\,\Gamma_{c}(\rotA{\theta}^{T}\xi)\,\rotA{\theta}^{T}$.
    Since $\rotA{\theta}^T=\rotA{\theta}^{-1}$ it follows that $\Gamma_{\rotC{\theta}c}(\xi)$ and $\Gamma_{c}(\rotA{\theta}^{T}\xi)$ have the same characteristic polynomial and thus the same
    eigenvalues. Writing these for $c$ as $\eta^T g_p^{-1}\eta$ and
    $\eta^T g_s^{-1}\eta$, those for $\rotC{\theta}c$ are
    $\xi^T\bigl(\rotB{\theta}g_p^{-1}\bigr)\xi$ and
    $\xi^T\bigl(\rotB{\theta}g_s^{-1}\bigr)\xi$. As
    $\rotB{\theta}(M^{-1})=(\rotB{\theta}M)^{-1}$ for symmetric positive definite matrices $M$ this implies the following for the slowness polynomials:
    \begin{align}
    \spoly{\rotC{\theta}c}(\xi)&=(\eta^T g_p^{-1}\eta-1)(\eta^T g_s^{-1}\eta-1)\\
    &=(\xi^T\bigl(\rotB{\theta}g_p\bigr)^{-1}\xi-1)(\xi^T\bigl(\rotB{\theta}g_s\bigr)^{-1}\xi-1)\\
    &=\spoly{c}\bigl(\rotA{\theta}^{T}\xi\bigr)
    \end{align}
    for all $\xi \in \R^2$, which is~\eqref{eq:rotated_slowness_surface}.

    We now turn to the three consequences. Since the substitution
    $\xi\mapsto\rotA{\theta}^{T}\xi$ in~\eqref{eq:rotated_slowness_surface} is the same for
    every stiffness tensor, two tensors have the same slowness polynomial if and only if
    their rotations by $\theta$ do, which is (1). For (2) we note that the slowness surface
    of $\rotC{\theta}c$ is the rotation of the one of $c$, so it is a union of a concentric
    circle and ellipse, and these do not meet for $\rotC{\theta}c$ if and only if they do
    not meet for $c$; hence $\rotC{\theta}c$ is algebraically admissible, and analytically
    admissible precisely when $c$ is. Finally, if $c\in\analyticallyadmissibletensors$ and
    $\psi(c)=(g_p,g_s)$, then it follows from the above that
    \begin{equation}
        \psi(\rotC{\theta}c)=(\rotB{\theta}g_p,\rotB{\theta}g_s)=\rotB{\theta}\psi(c),
    \end{equation}
    which is (3).
\end{proof}

Using the Lemma, the following Corollary follows straightforwardly after rotations of stiffness tensors relative to stiffness tensors in the fixed coordinate axis setting as in Lemma~\ref{lem:anIsoAC}:
\begin{corollary}[Anomalous companions without fixed symmetry axes] \label{cor:posdef}
Let $c$ be an analytically admissible stiffness tensor. 

\begin{enumerate}
    \item If $c$ is not isotropic it has a unique  analytically admissible anomalous companion $b$.
    \item If $c$ is isotropic it has infinitely many anomalous companions. They satisfy $P_1(c)=P_2(c)=0$, $R(c)\geq 0$ and $Q(c)\neq 0$ for $P_1$, $P_2$, $R$ and $Q$ defined in~\eqref{eq:P1}, \eqref{eq:P2}, \eqref{eq:Lnonzero} and~\eqref{eq:Qnonzero}, but none of them is positive definite.
\end{enumerate}
\end{corollary}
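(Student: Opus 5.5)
The plan is to reduce to the fixed-axis case of Lemma~\ref{lem:anIsoAC} by a rotation, and then transport the conclusions back using Lemma~\ref{lem:psi-equivariant}.

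Let $c\in\analyticallyadmissibletensors$. By Lemma~\ref{lem:rotated_tensors} there is $\theta$ such that $c'\coloneqq\rotC{\theta}c$ satisfies \eqref{eq:fixed_axes} and \eqref{eq:Dcond} with $D\neq0$. By Lemma~\ref{lem:P1_P2_vanish_is_rotationally_invariant} and Lemma~\ref{lem:psi-equivariant}(2), $c'$ is positive definite and analytically admissible. Isotropy is preserved under rotation: by \eqref{eq:iso_tenor} the isotropic tensors are fixed by every $\rotC{\theta}$, and conversely if $\rotC{\theta}c$ is isotropic then $c=\rotC{-\theta}\rotC{\theta}c$ is too. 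So $c'$ is isotropic if and only if $c$ is.

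For case (1), suppose $c$ is not isotropic and let $b$ be an analytically admissible anomalous companion of $c$. By Lemma~\ref{lem:psi-equivariant}(1),(2), $b'\coloneqq\rotC{\theta}b$ is an analytically admissible anomalous companion of $c'$.

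The key step is to show that $b'$ also has the coordinate axes as symmetry axes. Since $b'$ and $c'$ share a slowness surface, Corollary~\ref{cor:symmetry_axis_of_c_and_sigma_agree} identifies the symmetry axes of both tensors with those of that common surface. That surface is the union of a circle and a genuine ellipse, not a circle, because $c'$ is non-isotropic (Proposition~\ref{prop:two-circles}). Hence $b'$ is non-isotropic and its symmetry axes are the coordinate axes. By Lemma~\ref{lem:fixed_axes} this gives $b'_{13}=b'_{23}=0$, and Lemma~\ref{lma:special-characterization-analytic} then gives \eqref{eq:Dcond} with $D\neq0$ for $b'$.

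Lemma~\ref{lem:anIsoAC}(1) now says $b'$ is the unique such companion of $c'$, and that it exists. Rotating back by $\rotC{-\theta}$ gives existence and uniqueness of $b$, using Lemma~\ref{lem:psi-equivariant} once more.

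For case (2), $c'$ is isotropic. Lemma~\ref{lem:rotation-orbit} supplies the companions $\rotC{s}\tilde c$, $s\in[0,2\pi)$, of the form \eqref{eq:isoslow2}. These are pairwise distinct for generic $s$, since their $13$-entry is $\tfrac12(c_{11}-c_{33})\sin(4s)$ with $c_{11}\neq c_{33}$ when $D\neq0$. This gives infinitely many companions of $c'$, hence of $c$ after rotating back. Each is a rotation of the tensor in \eqref{eq:circlesemi}, which satisfies $P_1=P_2=0$ (Lemma~\ref{lma:special-characterization}). The stated conditions $R\ge0$, $Q\neq0$ are those of Proposition~\ref{prop:an-admissible} evaluated on the common slowness data, i.e.\ disjoint circles. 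Rotation preserves $P_1=P_2=0$ by Lemma~\ref{lem:P1_P2_vanish_is_rotationally_invariant}.

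For non-positive-definiteness, I would argue as follows. A positive definite companion has a slowness surface made of two concentric circles, so it is isotropic by Proposition~\ref{prop:two-circles}. Two isotropic tensors with the same circles have the same $(\lambda,\mu)$ data up to the swap in Remark~\ref{rem:Isoparam}, and the swapped one, \eqref{eq:isoparamflipped}, is indefinite. So no positive definite companion exists.

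The main obstacle is the symmetry-axis step in case (1): it must be justified that a companion of a non-isotropic tensor is itself non-isotropic and aligned with the same axes. Corollary~\ref{cor:symmetry_axis_of_c_and_sigma_agree}, which needs positive definiteness, is the tool for this.
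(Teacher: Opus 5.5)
Your argument follows the paper's proof essentially step for step. In case (1) you rotate into the fixed-axis frame, use Corollary~\ref{cor:symmetry_axis_of_c_and_sigma_agree} and Lemma~\ref{lem:fixed_axes} to put any admissible companion on the coordinate axes, and get existence and uniqueness from Lemma~\ref{lem:anIsoAC}. In case (2) you use the rotation orbit of Lemma~\ref{lem:rotation-orbit} and Proposition~\ref{prop:two-circles}. Your non-definiteness argument (a positive definite companion would be isotropic, hence equal to $c$ or to the indefinite tensor \eqref{eq:isoparamflipped}) is a slightly tidier packaging of the paper's check of each companion.

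The one step you only assert, as does the paper, is $R\geq0$, $Q\neq0$ for the companions. You cannot invoke Proposition~\ref{prop:an-admissible} for them, because it characterizes admissibility only among positive definite tensors. Since $R$ and $Q$ are defined as polynomials in the entries of the tensor, ``evaluating on the common slowness data'' needs an argument.

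The quickest fix is as follows. By Proposition~\ref{prop:two-circles} every companion has the form \eqref{eq:isoslow1} or \eqref{eq:isoslow2}. On both families direct substitution gives $R_1=R_2=(c_{11}-c_{33})^2+4c_{13}^2=(c_{11}+c_{33})^2-4(c_{11}c_{33}-c_{13}^2)$. By \eqref{eq:radiieq} this is $(r_1-r_2)^2=(\lambda+\mu)^2>0$, so $R=2(\lambda+\mu)^2\geq0$ and $Q=(\lambda+\mu)^4\neq0$. Your phrasing can also be made literally true: $R_1$ and $R_2$ can be rewritten as polynomials in the coefficients of \eqref{eq:slownessgeneral}. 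That, however, is a computation you would have to supply.

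The family-wise argument also covers the companion \eqref{eq:isoparamflipped}, which your verification of ``they satisfy'' skips, since you only treat the rotation orbit. For $P_1=P_2=0$, note that \eqref{eq:isoslow2} is exactly the singular locus in Lemma~\ref{lem:SingLoc}, and \eqref{eq:isoslow1} is covered by Lemma~\ref{lem:SingLoc}(3). This avoids citing lemmas stated only for positive definite tensors; your use of those lemmas really relies on the underlying polynomial identities.
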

\begin{proof}

\textit{Proof of (1):}
We take $\theta \in [0,2\pi)$ such that $\rotC{\theta}c$ satisfies~\eqref{eq:fixed_axes},~\eqref{eq:Dcond} with $D\neq 0$. Thus, it follows from Lemma~\ref{lem:anIsoAC} that $\rotC{\theta}c$ has exactly one anomalous companion $b$ for which~\eqref{eq:fixed_axes}, \eqref{eq:Dcond} and $D\neq 0$ are true. Hence, it follows from Lemma~\ref{lem:psi-equivariant}(1) that $\rotC{-\theta}b$ is an anomalous companion of $c$. We have shown that $c$ has at least one anomalous companion. 

If $b'$ is an algebraically admissible anomalous companion of $c$ then it follows from Lemma~\ref{lem:psi-equivariant}(1) that $\rotC{\theta}b'$ is an anomalous companion of $\rotC{\theta}c$. Therefore, we get from Corollary~\ref{cor:symmetry_axis_of_c_and_sigma_agree} that $\rotC{\theta}b'$ and $\rotC{\theta}c$ have the same symmetry axes, so that $\rotC{\theta}b'$ satisfies~\eqref{eq:fixed_axes}, and hence~\eqref{eq:Dcond} with $D\neq 0$ follows from Lemma~\ref{lma:special-characterization-analytic}. Finally, it follows from Lemma~\ref{lem:anIsoAC} that $\rotC{\theta}b'=b$. Thus, we have proven that $c$ has exactly one anomalous companion.

\medskip

\textit{Proof of (2):}
We note that $c$ and $c_2$ in~\eqref{eq:isoreference} and~\eqref{eq:isoreference2} are invariant under rotations, whereas $b$ and $b_2$ in~\eqref{eq:isoreferencesemi} and~\eqref{eq:isoreferencesemi2} are not. By Lemma~\ref{lem:rotation-orbit}, the algebraically admissible stiffness tensors giving rise to the same slowness surface as $c$ (equivalently $c_2$) are exactly the rotations of $b$, $\rotC{\theta}b$. By the proof of the same Lemma for each $\theta\in [0,2\pi)$ $\rotC{\theta}b$ is a stiffness tensor of the form~\eqref{eq:isoslow2} and is analytically admissible by Lemma~\ref{lem:psi-equivariant}(2).
$\rotC{\theta}b$ has the following entries:

\begin{align}
    (\rotC{\theta}b)_{11}
    &= \tfrac{1}{2}(\lambda+\mu)\cos(4\theta)+\tfrac{1}{2}(\lambda+3\mu),\\
    (\rotC{\theta}b)_{33}
    &= -\tfrac{1}{2}(\lambda+\mu)\cos(4\theta)+\tfrac{1}{2}(\lambda+3\mu),\\
    (\rotC{\theta}b)_{13}
    &= \tfrac{1}{2}(\lambda+\mu)\sin(4\theta),
\end{align}

The rotations of $b_2$ yield no further tensors: since $b$ and $b_2$ share the block $\begin{psmallmatrix}\lambda+2\mu & 0\\ 0 & \mu\end{psmallmatrix}$ up to swapping its diagonal entries—itself a rotation by $\tfrac{\pi}{4}$—one has $\rotC{\theta}b_2=\rotC{\theta+\pi/4}b$, so the two orbits coincide.

Since $c$ is positive definite we have $\mu>0$ and $\lambda+\mu>0$, so $(\rotC{\theta}b)_{13}=\tfrac12(\lambda+\mu)\sin(4\theta)$ is non-constant in $\theta$; hence $\{\rotC{\theta}b:\theta\in[0,2\pi)\}$ is an infinite set, and together with $c_2$ these are the anomalous companions of $c$ that satisfy $P_1(c)=P_2(c)=0$, $R(c)\geq 0$ and $Q(c)\neq 0$.

Finally, none of these companions is positive definite. Each $\rotC{\theta}b$ is of the form~\eqref{eq:isoslow2} and therefore not positive
definite by Proposition~\ref{prop:two-circles}. The remaining companion $c_2$ is indefinite, hence likewise not positive definite. This proves the claim.

\end{proof}

Additionally, we make the following observation for isotropic stiffness tensors that is used in the proof of Theorem~\ref{thm:how-many-stiffness-tensor-fields}:
\begin{remark}\label{rem:isotropic-conformal}
    A tensor $c\in\analyticallyadmissibletensors$ is isotropic if and only if $g_p$ is a
    multiple of the identity, where $\psi(c)=(g_p,g_s)$.
\end{remark}

\begin{proof}
    If $c$ is isotropic, then $c_{11}=c_{22}$ and $g_p$ is a multiple of the identity by~\eqref{eq:psidef}.

    Conversely, assume that $g_p$ is a multiple of the identity. Since $g_s$ is a multiple of the identity as
    well, both sheets of the slowness surface of $c$ are circles centred at the origin, so
    $c$ is isotropic by Proposition~\ref{prop:two-circles}.
\end{proof}

With this in mind we are ready to prove that the map $\psi$ is onto:
\begin{proof}[Proof of Proposition~\ref{prop:components+psismooth}\,(1)]
    Let $(g_p,g_s)\in\admissiblepairsofmetrics$. As $g_s$ is conformal we have
    $g_s=\sigma I$ for some $\sigma>0$, and by the spectral theorem there is $\theta$ such
    that $\rotB{-\theta}g_p$ is diagonal. We write
    $\rotB{-\theta}g_p=\operatorname{diag}(\mu_1,\mu_2)$ and note that
    $\rotB{-\theta}g_s=g_s$. Since conjugation by an orthogonal matrix preserves the
    ordering of symmetric matrices, the admissibility condition $g_p<g_s$ implies
    $0<\mu_1,\mu_2<\sigma$.

    We now consider the stiffness tensor $c$ of the form~\eqref{eq:cparam} with parameters
    \begin{equation}
        c_{11}\coloneqq\mu_1^{-1},
        \qquad
        c_{22}\coloneqq\mu_2^{-1},
        \qquad
        c_{33}\coloneqq\sigma^{-1} .
    \end{equation}
    By construction $c$ satisfies~\eqref{eq:fixed_axes} and~\eqref{eq:Dcond}, and
    $\mu_1,\mu_2<\sigma$ gives $\min(c_{11},c_{22})>c_{33}$, so that
    $D=(c_{11}-c_{33})(c_{22}-c_{33})>0$.

    We first check that $c$ is positive definite. Its Voigt matrix is block diagonal, so
    this amounts to $c_{11}>0$, $c_{33}>0$ and $c_{11}c_{22}-c_{12}^{2}>0$. Writing
    $a\coloneqq c_{11}-c_{33}>0$ and $b\coloneqq c_{22}-c_{33}>0$, we have $D=ab$ and
    $c_{12}=-c_{33}+\sqrt{ab}$, and thus
    \begin{equation}
        c_{11}c_{22}-c_{12}^{2}
        =c_{33}(a+b)+2c_{33}\sqrt{ab}
        =c_{33}\bigl(\sqrt{a}+\sqrt{b}\bigr)^{2}
        >0 .
    \end{equation}
    Hence $c$ is positive definite, and it follows by Lemma~\ref{rem:discond} and
   ~\eqref{eq:major_minor_axis_vs_radius_cond} that $c$ is analytically admissible.

    By~\eqref{eq:psidef} we have
    \begin{equation}
        \psi(c)
        =\left(\operatorname{diag}(c_{11}^{-1},c_{22}^{-1}),\,c_{33}^{-1}I\right)
        =\left(\operatorname{diag}(\mu_1,\mu_2),\,\sigma I\right)
        =\rotB{-\theta}(g_p,g_s),
    \end{equation}
    and thus by Lemma~\ref{lem:psi-equivariant} the tensor $\rotC{\theta}c$ is analytically
    admissible and satisfies\\
    $\psi(\rotC{\theta}c)=\rotB{\theta}\psi(c)=(g_p,g_s)$. Hence, $\psi$ is onto.
\end{proof}

In the following Lemma we show that we can distinguish between the two branches of $\psi$. For an analytically admissible $c$ let $\det c$ denote the determinant of its Voigt matrix.

\begin{lemma}\label{lem:companion-normal-form}
    Let $c\in\analyticallyadmissibletensors$ be non-isotropic and let $b$ be its unique
    analytically admissible anomalous companion then $\det b\neq\det c$. Consequently every fibre
    $\psi^{-1}(\psi(c))=\{c,b\}$ has a unique element of largest determinant and a unique element of smallest determinant. This distinction is highlighted by denoting the stiffness tensor associated to the former by $c_+$ and to the latter by $c_-$.
\end{lemma}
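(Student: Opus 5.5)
The strategy is to reduce to the fixed-axes normal form by a rotation. There the two companions are exactly the pair $c_\pm$ of Lemma~\ref{lem:cct}, and the determinant formula~\eqref{eq:detcpm} separates them.

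First, by Lemma~\ref{lem:rotated_tensors} choose $\theta$ such that $c'\coloneqq\rotC{\theta}c$ satisfies~\eqref{eq:fixed_axes} and~\eqref{eq:Dcond} with $D(c')\neq 0$. By Lemma~\ref{lem:psi-equivariant}(1),(2), the tensor $b'\coloneqq\rotC{\theta}b$ is an analytically admissible anomalous companion of $c'$. By the uniqueness part of Corollary~\ref{cor:posdef}(1), or directly by Lemma~\ref{lem:anIsoAC}(1), $b'$ is the unique companion in normal form. Since $c$ is non-isotropic, so is $c'$, because rotations preserve the isotropic form~\eqref{eq:iso_tenor}. Lemma~\ref{lem:cct} together with the uniqueness argument in the proof of Lemma~\ref{lem:anIsoAC} then gives $\{c',b'\}=\{c'_+,c'_-\}$, both with the same diagonal entries $c_{11},c_{22},c_{33}$. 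Formula~\eqref{eq:detcpm} yields $\det c'_+-\det c'_-=4c_{33}^2\sqrt{D}>0$, because $D\neq0$ and $c_{33}>0$ by Lemma~\ref{lem:posdefcond}. Hence $\det b'\neq\det c'$.

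The remaining step, which I expect to be the main point to check, is that the Voigt determinant is rotation invariant, i.e.\ $\det\rotC{\theta}c=\det c$. I would argue as in the proof of Lemma~\ref{lem:posdef}. Under the identification $A\mapsto\hat A=(A_{11},A_{22},2A_{12})$, the quadratic form of $\rotC{\theta}c$ is $\hat A\mapsto \widehat{B}^{T}c\,\widehat B$ with $B=\rotA{\theta}^TA\rotA{\theta}$, by~\eqref{eq:c_theta_A_A}. So the Voigt matrix of $\rotC{\theta}c$ equals $M_\theta^{T}\,c\,M_\theta$, where $M_\theta$ is the $3\times3$ matrix of the linear map $\hat A\mapsto\widehat B$. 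It then suffices to show $\det M_\theta=\pm1$. The map $\theta\mapsto M_\theta$ is a continuous group homomorphism from $SO(2)$, and $\det M_\theta$ is a continuous homomorphism into $\R\setminus\{0\}$ from a compact connected group, so it is identically $1$. Alternatively, one can compute directly: $\operatorname{Sym}^2$ of a rotation has determinant $(\det \rotA{\theta})^{3}=1$, and the rescaling of the third coordinate cancels in the conjugation.

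Consequently $\det c=\det c'\neq\det b'=\det b$. Since the fibre $\psi^{-1}(\psi(c))=\{c,b\}$ by Corollary~\ref{cor:posdef}(1) and Proposition~\ref{prop:components+psismooth}, the two elements have distinct determinants, and labelling the larger one $c_+$ and the smaller one $c_-$ is well defined. Moreover, in normal form this labelling agrees with the sign choice of $\pm\sqrt D$ in~\eqref{eq:cparam}--\eqref{eq:sibling}.
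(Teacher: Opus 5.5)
Your proposal is correct and follows the paper's proof. You rotate into the fixed-axes normal form via Lemma~\ref{lem:rotated_tensors}, identify $\rotC{\theta}c$ and $\rotC{\theta}b$ with the pair $c_\pm$ using Lemma~\ref{lem:psi-equivariant} and the uniqueness in Corollary~\ref{cor:posdef}(1), separate them with~\eqref{eq:detcpm}, and transport back by rotation invariance of the Voigt determinant. The only addition is that you justify this invariance (writing the Voigt matrix of $\rotC{\theta}c$ as $M_\theta^{T}cM_\theta$ with $\det M_\theta=1$), which the paper simply asserts.
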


\begin{proof}
    By Lemma~\ref{lem:rotated_tensors} there is $\theta\in[0,2\pi)$ with
    $(\rotC{\theta}c)_{13}=(\rotC{\theta}c)_{23}=0$, so $\rotC{\theta}c$ is one of~\eqref{eq:cparam} and~\eqref{eq:sibling}. By Lemma~\ref{lem:psi-equivariant}(1) the
    tensor $\rotC{\theta}b$ remains an anomalous companion of $\rotC{\theta}c$, and it is
    algebraically admissible by Lemma~\ref{lem:psi-equivariant}(2). Hence $\rotC{\theta}b$
    equals the other of~\eqref{eq:cparam} and~\eqref{eq:sibling} by the uniqueness in
    Corollary~\ref{cor:posdef}(1). For these two tensors $\det(c_+)>\det(c_-)$ by
    Lemma~\ref{lem:cct}, and the determinant of the Voigt matrix is invariant under
    $\rotC{\theta}$, so $\det c\neq\det b$.
\end{proof}

Using Corollary~\ref{cor:posdef} and Lemma~\ref{lem:companion-normal-form} we are now able to prove that $\psi$ is 1-to-1 or 2-to-1 respectively:

\begin{proof}[Proof of Proposition~\ref{prop:components+psismooth}\textup{(5)}]
    Let $c\in\analyticallyadmissibletensors$. By definition, $c'\in\psi^{-1}(\psi(c))$ if
    and only if $c'=c$ or $c'$ is an anomalous companion of $c$.

    If $c$ is isotropic, then by Corollary~\ref{cor:posdef}\,(2) no anomalous companion of
    $c$ is positive definite, hence none is admissible, and $\psi^{-1}(\psi(c))=\{c\}$.

    If $c$ is not isotropic, then $c$ is analytically admissible, so
    Corollary~\ref{cor:posdef}\,(1) provides exactly one analytically admissible anomalous
    companion $b$, so $\psi^{-1}(\psi(c))\subset\{c,b\}$.
\end{proof}

By Theorem~\ref{prop:components+psismooth}\textup{(5)} we are now certain that the map $\psi$ is either 1-to-1 or 2-to-1. It then follows
by Lemma~\ref{lem:companion-normal-form} the two elements of a two-element fibre of $\psi$
are distinguished by their determinant. We may therefore define
\begin{equation}
    \admissiblepairsofmetrics_-
    \coloneqq
    \{(g_p,g_s)\in\admissiblepairsofmetrics ;
    \#\psi^{-1}(g_p,g_s)=2\},
    \qquad
    \admissiblepairsofmetrics_+
    \coloneqq
    \admissiblepairsofmetrics,
\end{equation}
and maps
\begin{equation}
    \psi_+^{-1}\colon\admissiblepairsofmetrics_+\to\analyticallyadmissibletensors,
    \qquad
    \psi_-^{-1}\colon\admissiblepairsofmetrics_-\to\analyticallyadmissibletensors,
\end{equation}
sending $(g_p,g_s)$ to the element of $\psi^{-1}(g_p,g_s)$ of largest, respectively
smallest, determinant. These are well defined by
Proposition~\ref{prop:components+psismooth}\,(1) and~(5), and we set
\begin{equation}
    \analyticallyadmissibletensors_\pm
    \coloneqq
    \psi_\pm^{-1}\bigl(\admissiblepairsofmetrics_\pm\bigr).
\end{equation}
For a tensor $c$ satisfying~\eqref{eq:fixed_axes} and~\eqref{eq:Dcond} this labelling agrees
with the one in Lemma~\ref{lem:cct}, since $\det(c_+)>\det(c_-)$ there as well.

\begin{proof}[Proof of Proposition~\ref{prop:components+psismooth}\textup{(4)}]
    By construction $(g,h)\in\admissiblepairsofmetrics\setminus\admissiblepairsofmetrics_-$
    if and only if $\psi^{-1}(g,h)$ has exactly one element. By Proposition~\ref{prop:components+psismooth}\textup{(5)} a fibre of $\psi$ is a
    singleton precisely when its element is isotropic. Hence $c$ lies in
    $\psi^{-1}(\admissiblepairsofmetrics\setminus\admissiblepairsofmetrics_-)$ if and only
    if $\psi^{-1}(\psi(c))=\{c\}$, that is, if and only if $c$ is isotropic.
\end{proof}

\section{Smooth correspondence between metric tensors and stiffness tensors}
\label{sec:ImplicitFuncThm}

This section is devoted to proving parts~(2) and~(3) of Proposition~\ref{prop:components+psismooth}.
They are captured by Lemma~\ref{lem:smoothMap}. Once these parts are established we can prove Theorem~\ref{thm:how-many-stiffness-tensor-fields} at the end of this section.

For the proof of Lemma~\ref{lem:smoothMap} it is convenient to record a pair
$(g_p,g_s)\in\admissiblepairsofmetrics$ by the inverse quadratic forms
\begin{equation}
    Q\coloneqq g_p^{-1}\in\R^{2\times2}
    \qquad\text{and}\qquad
    g_s^{-1}\eqqcolon qI,\quad q\in\R,
\end{equation}
which is possible since $g_s$ is a multiple of the Euclidean metric. As inversion reverses
the order on positive definite matrices, condition~\eqref{eq:setE} becomes $Q>qI$, and by
a slight abuse of notation we write
\begin{equation}\label{eq:setEquadratic}
    \admissiblepairsofmetrics
    =
    \{(Q,q)\in\R^{2\times2}\times\R
    \;;\;
    Q=Q^{T},\ Q>qI>0\},
    \qquad
    \psi(c)=(Q,q).
\end{equation}
Once we show that this map is smooth it follows immediately that $\psi(c)=(g_p,g_s)$ is smooth as the inverse map is smooth on the open set of invertible matrices. We will think of $\admissiblepairsofmetrics$ as an open subset of $\R^3\times\R$ that consists of pairs of quadratic forms on a cotangent space.
By Proposition~\ref{prop:an-admissible} we know that $\analyticallyadmissibletensors$ is a subset of the variety defined by the polynomials~$P_1$ and~$P_2$, and therefore we think of it as a $4$-dimensional subset of~$\R^6$.

In the previous section we defined the sets~$\admissiblepairsofmetrics_\pm$ and
$\analyticallyadmissibletensors_\pm$ and showed in
Proposition~\ref{prop:components+psismooth}\,(1) and~(5) that the restriction
$\psi_\pm\colon\analyticallyadmissibletensors_\pm\to\admissiblepairsofmetrics_\pm$ is a
bijection with inverse~$\psi_\pm^{-1}$.
We next show that~$\analyticallyadmissibletensors_\pm$ is a smooth submanifold of~$\R^6$ and that the maps~$\psi_\pm$ are diffeomorphisms.

\begin{lemma}\label{lem:smoothMap}
The sets $\analyticallyadmissibletensors_\pm$ are smooth submanifolds of $\R^6$ of dimension $4$ and the maps $\psi_\pm\colon \analyticallyadmissibletensors_\pm\to \admissiblepairsofmetrics_\pm$ are diffeomorphisms.
\end{lemma}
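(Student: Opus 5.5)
The plan is to construct the inverse maps $\psi_\pm^{-1}$ explicitly as smooth maps from the open sets $\admissiblepairsofmetrics_\pm\subset\R^3\times\R$ into $\R^6$. Then we show that $\psi$ is the restriction of a smooth map defined on a neighbourhood of $\analyticallyadmissibletensors$ in $\R^6$. A smooth injective immersion with smooth left inverse is then an embedding, and this yields both claims at once.

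\emph{Step 1: smoothness of $\psi$.} For $c\in\analyticallyadmissibletensors$ the two eigenvalues of the Christoffel matrix $\Gamma_c(\xi)$ are simple for $\xi\neq0$, since the circle and the ellipse do not meet. The simple-eigenvalue perturbation theory (implicit function theorem applied to $\det(\Gamma_c(\xi)-\lambda I)$) makes $\lambda_{1,2}(c,\xi)$ smooth in $c$ near $\analyticallyadmissibletensors$. The larger eigenvalue is $\xi^TQ\xi$ and the smaller is $q|\xi|^2$. Evaluating at $\xi=e_1,e_2,(e_1+e_2)/\sqrt2$ recovers the three entries of $Q$ as smooth functions of $c$, and evaluating at $\xi=e_1$ recovers $q$. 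Hence $\psi$ extends smoothly to an open neighbourhood $U\supset\analyticallyadmissibletensors$ in $\R^6$ (restricted to $\V(P_1,P_2)$ it agrees with $\psi$).

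\emph{Step 2: explicit smooth inverse.} Given $(Q,q)$ with $Q>qI>0$, write $Q=\rotB{\theta}\operatorname{diag}(\mu_1,\mu_2)$ and set $c_{11}=\mu_1$, $c_{22}=\mu_2$, $c_{33}=q$, $c_{12}=-q\pm\sqrt{(\mu_1-q)(\mu_2-q)}$, and then rotate by $\rotC{\theta}$, as in the surjectivity proof. The eigendecomposition is not smooth at isotropic points, so I would instead write the result invariantly. The rotated tensor is a polynomial in the entries of $Q$, $q$, and $\sqrt{\det(Q-qI)}$, with the sign $\pm$ attached to the square root term. The candidate formulas are $c_{11}=Q_{11}$, $c_{22}=Q_{22}$, $c_{33}=q+\ldots$, and I would fit the remaining coefficients so that in the diagonal frame they reduce to \eqref{eq:cparam} and \eqref{eq:sibling}; checking rotation covariance via \eqref{eq:rotCexplicit} then extends the formula to all frames. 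Since $\det(Q-qI)>0$ on $\admissiblepairsofmetrics$, the square root is smooth there. For the $-$ branch we need smoothness only on $\admissiblepairsofmetrics_-$, which is open because it is the complement of the closed condition ``$Q$ is a multiple of $I$''. The $+$ branch formula is smooth on all of $\admissiblepairsofmetrics$.

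\emph{Step 3: conclusion.} Let $\Phi_\pm$ denote these smooth maps. They satisfy $\psi\circ\Phi_\pm=\mathrm{id}$ and have images $\analyticallyadmissibletensors_\pm$, by the determinant labelling of Lemma~\ref{lem:companion-normal-form} and \eqref{eq:detcpm}. Then $\Phi_\pm$ is an injective immersion, because $d\psi\circ d\Phi_\pm=\mathrm{id}$. It is a homeomorphism onto its image, because its inverse is the restriction of the continuous map $\psi$. Hence $\Phi_\pm$ is a smooth embedding of a $4$-dimensional open set, its image $\analyticallyadmissibletensors_\pm$ is an embedded $4$-dimensional submanifold, and $\psi_\pm=\Phi_\pm^{-1}$ is a diffeomorphism.

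The main obstacle is Step 2: finding the rotation-covariant closed formula and verifying it, likely with computer algebra as elsewhere in the paper. A fallback is to use the implicit function theorem on $(P_1,P_2)$ together with Step 1. Lemma~\ref{lem:SingLoc} gives that $\V(P_1,P_2)$ is smooth of dimension $4$ near positive definite points. It then suffices to check that $d\psi$ restricted to the tangent space is invertible, which can be done at the diagonal representatives by equivariance (Lemma~\ref{lem:psi-equivariant}).
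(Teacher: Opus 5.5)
Your Steps 1 and 3 are sound. Near $\analyticallyadmissibletensors$ the eigenvalues of $\Gamma_c(\xi)$ at your three test covectors are simple, so $\psi$ extends smoothly to an open set $U\supset\analyticallyadmissibletensors$. A smooth right inverse with image $\analyticallyadmissibletensors_\pm$ would then indeed be an embedding.

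\textbf{The gap is Step 2.} It carries the whole content of the lemma, and the ansatz you propose for it cannot work.

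First, $c_{11}=Q_{11}$ fails as soon as $c_{13}\neq0$. The matrix $\Gamma_c(e_1)=\begin{pmatrix}c_{11}&c_{13}\\ c_{13}&c_{33}\end{pmatrix}$ has eigenvalues $Q_{11}$ and $q$. Hence $c_{11}+c_{33}=Q_{11}+q$ and $c_{11}c_{33}-c_{13}^2=Q_{11}q$, which forces $c_{11}<Q_{11}$. For instance, rotating \eqref{ex:c} by $\pi/4$ gives $c_{11}=\tfrac{15}{4}+\tfrac{\sqrt{10}}{2}$ while $Q_{11}=\tfrac{11}{2}$.

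Second, no formula of the kind you describe exists. A formula polynomial in $(Q,q,\sqrt{\det(Q-qI)})$ with the minus sign would be continuous on all of $\admissiblepairsofmetrics$. But $\psi_-^{-1}$ has no continuous extension to isotropic pairs. Take $Q=\rotB{\theta}\operatorname{diag}(q+a_1,q+a_2)$ with $\theta$ fixed and $a_1\to a_2$. Equivariance gives $\psi_-^{-1}(Q,q)\to\rotC{\theta}b$, where $b$ is the semidefinite tensor \eqref{eq:isoreferencesemi}, and these limits depend on $\theta$. So positivity of $\det(Q-qI)$ is not the mechanism that makes the branches smooth.

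\textbf{The missing idea.} Factor $\Gamma$ through a symmetric square root of $A\coloneqq Q-qI>0$. In the diagonal frame, $\Gamma_{c_\pm}(\xi)=q\abs{\xi}^2I+S\xi\xi^TS$ with $S=\operatorname{diag}(\sqrt{a_1},\pm\sqrt{a_2})$. Invariantly:
\begin{itemize}
\item Take $S_+=A^{1/2}=(A+\sqrt{\det A}\,I)/\sqrt{\tr A+2\sqrt{\det A}}$. It is smooth on all of $\admissiblepairsofmetrics$.
\item Take $S_-=(A-\sqrt{\det A}\,I)/\sqrt{\tr A-2\sqrt{\det A}}$. It satisfies $S_-^2=A$ and $\det S_-=-\sqrt{\det A}$. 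It is smooth exactly where $\tr A-2\sqrt{\det A}=(\sqrt{a_1}-\sqrt{a_2})^2>0$, that is, on $\admissiblepairsofmetrics_-$.
\end{itemize}
So the branches are indeed exchanged by $\sqrt{\det A}\mapsto-\sqrt{\det A}$, as you guessed. The entries are rational rather than polynomial, with denominators $\tr A\pm2\sqrt{\det A}$.

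Matching coefficients with $\Gamma_c$ defines $\Phi_\pm(Q,q)$. With $(s_{ij})=S_\pm$, set $c_{11}=q+s_{11}^2$, $c_{22}=q+s_{22}^2$, $c_{33}=q+s_{12}^2$, $c_{12}=s_{11}s_{22}-q$, $c_{13}=s_{11}s_{12}$ and $c_{23}=s_{12}s_{22}$.

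The matrix $S\xi\xi^TS$ has rank one with eigenvalue $\xi^TA\xi$. Hence $\Gamma_{\Phi_\pm(Q,q)}(\xi)$ has eigenvalues $q\abs{\xi}^2$ and $\xi^TQ\xi$ identically, and $\psi\circ\Phi_\pm=\mathrm{id}$ follows without any frame bookkeeping. Positive definiteness and the determinant labelling are rotation invariant, so you can check them in the diagonal frame. There $c_{11}c_{22}-c_{12}^2=q(\sqrt{a_1}\pm\sqrt{a_2})^2$, and \eqref{eq:detcpm} applies.

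\textbf{Comparison with the paper.} With this formula your Step 3 completes the proof, and it is a genuine alternative to the paper's argument. The paper encodes the graph of $\psi$ as a variety and uses Magma to show $\rank Df=6$. It then saturates by $\det(qI-Q)$ and $\det c$ to verify the rank conditions for the implicit function theorem. It also relies on Lemma~\ref{lem:SingLoc} and an openness argument for $\analyticallyadmissibletensors_\pm$ inside $\V(P_1,P_2)$. Your route is computer free, gives explicit inverses, and obtains the submanifold property directly from the embedding.

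Your fallback also works: at a diagonal representative one finds $T_c\V(P_1,P_2)\cap\ker d\psi=0$ on both branches. However, it only shows that $\psi$ is a local diffeomorphism. You would still have to prove that $\analyticallyadmissibletensors_\pm$ are open in $\analyticallyadmissibletensors$.
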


\begin{proof}[Proof of Lemma~\ref{lem:smoothMap} corresponding to Proposition~\ref{prop:components+psismooth}(1)\&(2)]
    The proof is split into 6 steps:
    \begin{enumerate}
        \item Check whether the locus, where the polynomials $P_1$ and $P_2$ vanish, is smooth
        \item Prepare for the implicit function theorem: Define the graph of the map $\psi$ as the vanishing set of a map $f:\admissiblepairsofmetrics \times \analyticallyadmissibletensors \rightarrow \R^N$
        \item Find the rank of the Jacobian $Df$
        \item Use the rank to determine smoothness of the graph $G:=V(f)\subset \admissiblepairsofmetrics \times \analyticallyadmissibletensors$
        \item Use the implicit function theorem to conclude that the projections of the product space to $\admissiblepairsofmetrics$ and $\analyticallyadmissibletensors$ respectively restricted to $G$ are diffeomorphisms 
        \item In order for the last statement to hold, check that the derivatives $\partial_E f$ and $\partial_F f$ are injective
    \end{enumerate}

\textbf{Step (1): Smoothness of the admissible locus.}
We define the variety
\begin{equation}
\Sigma
=
\{\, c\in\R^6 \mid P_1(c)=P_2(c)=0 \,\}
\subset \R^6.
\end{equation}
The singular locus of $\Sigma$ consists of those stiffness tensors at which $\Sigma$ fails to be smooth. By Lemma~\ref{lem:SingLoc}(2), no positive definite stiffness tensor lies in the singular locus of $\Sigma$. Hence the positive definite part of $\Sigma$ lies in the regular locus of the variety.

The subsets~$\analyticallyadmissibletensors_\pm$ are disjoint and distinguished continuously by the determinant ordering of anomalous companions. Since the two branches do not meet and each subset is connected, they form connected components of the analytically admissible region. Therefore,~$\analyticallyadmissibletensors_\pm$ are relatively open subsets of~$\Sigma$. Since they lie in the regular locus of the $4$-dimensional variety~$\Sigma$, it follows that~$\analyticallyadmissibletensors_\pm$ are smooth $4$-dimensional submanifolds of~$\R^6$. By construction, the two submanifolds are disjoint.

\textbf{Step (2): Finding the graph condition.} Now both~$\analyticallyadmissibletensors_\pm$ and~$\admissiblepairsofmetrics_\pm$ have smooth structure. We want to use the implicit function theorem to assess whether~$\psi_\pm$ is smooth. This is a local argument and is thus the same for~$\psi_+$ and~$\psi_-$. Therefore we do not distinguish between the two maps in the following. We want to define a graph of~$\psi$ that encodes both~$\psi$ and its inverse symmetrically. If we can show that the projections from~$\admissiblepairsofmetrics\times \analyticallyadmissibletensors$ to~$\analyticallyadmissibletensors$ and~$\admissiblepairsofmetrics$ restricted to~$G$ are diffeomorphisms this implies that~$\psi$ is a diffeomorphism. For that purpose we want to use an adapted version of the implicit function theorem. Define the graph as the vanishing set of a map 
$
f:\admissiblepairsofmetrics \times \analyticallyadmissibletensors \rightarrow \R^N
$,
that is obtained by demanding that the slowness polynomial is a product of $v^T Qv-1$ and $q\abs{v}^2-1$. As discussed in section \ref{sec:AlgAdStiffness} there are stiffness tensors whose slowness surface factor as circle and ellipse, but they are not positive definite. For instance the indefinite isotropic stiffness tensor in~\eqref{eq:isoparamflipped}. This isotropic stiffness tensor defines a irreducible component of the variety that defines stiffness tensors whose slowness surface factor as circle and ellipse. We remove this component through saturation. The corresponding reduced generating set of such stiffness tensors is then given by 9 polynomial equations in $q, Q$ and $c$:
\begin{equation}\label{eq:graphconds}
\begin{aligned}
Q_{12} - c_{13} - c_{23} &= 0, \\
Q_{11} - Q_{22} - c_{11} + c_{22} &= 0, \\
q + Q_{22} - c_{22} - c_{33} &= 0, \\
c_{11}c_{23} - c_{12}c_{13} + c_{12}c_{23} - c_{13}c_{22} &= 0, \\
c_{11}c_{22} - c_{11}c_{33} - c_{12}^{2} - 2c_{12}c_{33}
+ c_{13}^{2} + 2c_{13}c_{23} - c_{22}c_{33} + c_{23}^{2} &= 0, \\
Q_{22}c_{13} + Q_{22}c_{23} - c_{12}c_{23}
- c_{13}c_{33} - c_{22}c_{23} - c_{23}c_{33} &= 0, \\
Q_{22}c_{12} + Q_{22}c_{22} - c_{12}c_{22}
- c_{13}c_{23} - c_{22}^{2} - c_{23}^{2} &= 0, \\
Q_{22}c_{11} - Q_{22}c_{22} - c_{11}c_{22}
- c_{13}^{2} + c_{22}^{2} + c_{23}^{2} &= 0, \\
Q_{22}^{2} - Q_{22}c_{22} - Q_{22}c_{33}
+ c_{22}c_{33} - c_{23}^{2} &= 0.
\end{aligned}
\end{equation}
The corresponding Magma code is in Listing \ref{lst:graph}.

\textbf{Step (3): Find the rank of the Jacobian $Df$.} The conditions~\eqref{eq:graphconds} define our map $f:\admissiblepairsofmetrics \times \analyticallyadmissibletensors \rightarrow \R^N$. The union of the two graphs is $G:=V(f)\subset \admissiblepairsofmetrics \times \analyticallyadmissibletensors$. We now want to determine the rank of the Jacobian $Df$. As we have 9 polynomial equations in 10 variables we expect $Df$ to have rank at most 9. We compute all $7\times 7$ minors on $G$ (see Listing \ref{lst:rank}), as all of these minors vanish on $G$, $Df$ can have rank at most 7.  Additionally, we compute a irredundant set of generators, where the rank of $Df$ drops additionally below 6 (see Listing \ref{lst:rank}). The generators are given by $c_{11}=c_{22}=c_{33}=-c_{12}$ and $c_{13}=c_{23}=0$. Such a stiffness tensor is never positive definite. Thus $\rank(Df)=6$ on $G$ and generically on the whole affine variety (of which $G$ is a subset).

\textbf{Step (4): Smoothness of the graph $G$. } Most points on $G$ are smooth (the singular locus is always a subvariety of positive codimension). By the affine Jacobi criterion \cite[Prop. 10.11]{gathmann2021alggeom} and \cite[Sec. 4.4.8(i)]{gathmann2002alggeomchapter4} a point $p\in G$ is smooth if and only $\rank(Df)=10-\dim(G)$. Therefore, $\dim(G)=4$. We have now found that $G\subset \admissiblepairsofmetrics\times \analyticallyadmissibletensors$ is a smooth $4$-dimensional submanifold.
Its tangent space at any point $p\in G$ is $T_pG=\ker(Df(p))$.

Because $G$ is a smooth manifold, near any point $p$ there is a smooth function $f_p\colon \admissiblepairsofmetrics\times \analyticallyadmissibletensors\to\R^6$ so that near $p$ we have $G=f_p^{-1}(0)$ and $\rank(Df_p)=6$.
(Dropping the number of defining functions to the codimension of a variety is known as writing the variety as a complete intersection, and indeed any variety is a locally complete intersection near a smooth point.)
This function $f_p$ is better suited for use with the implicit function theorem.
It will be important that $\ker(Df_p(p))=\ker(Df(p))=T_pG$ at all points $p\in G$.

\textbf{Step (5): The implicit function theorem.} Let us name the two projections of the product space:
\begin{equation}
\begin{split}
\pi&\colon \admissiblepairsofmetrics\times \analyticallyadmissibletensors\to \admissiblepairsofmetrics
\\
\rho&\colon \admissiblepairsofmetrics\times \analyticallyadmissibletensors\to \analyticallyadmissibletensors
.
\end{split}
\end{equation}
We will show that the restrictions $\pi_G\colon G\to \admissiblepairsofmetrics$ and $\rho_G\colon G\to\Sigma=\rho(G)$ are both diffeomorphisms.
Then by composition the map $\psi_\pm\colon \analyticallyadmissibletensors_\pm\to \admissiblepairsofmetrics_\pm$ (which we already know to be a bijection) is a diffeomorphism as claimed.

Fix any point $p=(g_0,c_0)\in G\subset \admissiblepairsofmetrics\times \analyticallyadmissibletensors$, where $g_0=(Q_0,q_0)$. We will show that our maps $\pi_G$ and $\rho_G$ are diffeomorphisms in a neighborhood of $p$. In the following we decompose $D f$ and $Df_p$ respectively into the partial derivatives corresponding to $\admissiblepairsofmetrics$ and $\analyticallyadmissibletensors$: $Df(p)=[\partial_{\pairsofmetrics}f(p) \, \vert \, \partial_{\stiffnesstensors} f(p)]$. Here $\partial_{\pairsofmetrics} f(p)\in \R^{9\times 4}$ and $\partial_{\stiffnesstensors} f(p)\in \R^{9\times 6}$.

For $\pi_G$ to be a local diffeomorphism, there has to be a smooth map $\Sigma\ni c\mapsto g(c)$ so that $f_p(g(c),c)=0$ for all $c$ near $c_0$. 
Smoothness of the map $\Sigma\ni c\mapsto g(c)$ is now provided by the implicit function theorem if the partial derivative $\partial_{\pairsofmetrics}f_p(p)$ is injective.
As this implies that only the zero vector is in the kernel of $\partial_{\pairsofmetrics}f_p(p)$, this condition can be recast as  $(\R^4\times0)\cap\ker(Df_p(p))=0$, which in turn is equivalent to $(\R^4\times0)\cap\ker(Df(p))=0$ or
\begin{equation}
\label{eq:ift-pi}
\rank(\partial_{\pairsofmetrics}f(p))
=
4
.
\end{equation}
We will return to checking this condition soon.

For $\rho_G$ to be a local diffeomorphism, there has to be a smooth map $g\mapsto c(g)\in \Sigma$ so that $f_p(g,c(g))=0$ for all $g$ near $g_0$.
Now the implicit function theorem requires that the restricted partial derivative $\partial_{\stiffnesstensors}f_p|_{T_{\rho(p)}\Sigma}$ is injective. Before proceeding, we note that $T_{\rho_G(p)}\Sigma=\ker(DP(\rho_G(p)))$ is justified, as the admissible locus is smooth by step (1). Now as $\Sigma$ is the vanishing set of $P=(P_1,P_2)$, this injectivity condition is equivalent to $(0\times\ker(DP(\rho(p))))\cap\ker(Df_p(p))=0$.
Using $\ker(Df_p(p))=\ker(Df(p))$, this is the same as the block matrix
\begin{equation}
B(p)
\coloneqq
\begin{pmatrix}
DP(\rho_G(p))\\
\partial_{\stiffnesstensors} f(p)
\end{pmatrix}
\end{equation}
satisfying
\begin{equation}
\label{eq:ift-rho}
\rank(B(p))
=
6
.
\end{equation}
Note that since there are two polynomial conditions on 6 variables $DP(\rho_G(p))\in \mathbb{R}^{2\times 6}$ and thus $B(p)\in \mathbb{R}^{11\times 6}$.
We will next check when these conditions hold.

\textbf{Step (6): Finding the ranks of $\partial_{\pairsofmetrics}f(p)$ and $B(p)$.}
We now verify the rank conditions~\eqref{eq:ift-pi} and~\eqref{eq:ift-rho} required for the implicit function theorem arguments above, using the Magma computation in Listing~\ref{lst:rankpirho}.

The ideal \texttt{failure1} (resp.\ \texttt{failure2}) is generated, together with $f$, by the $4\times 4$ minors of $\partial_{\pairsofmetrics} f$ (resp.\ the $6\times 6$ minors of $B$). By construction, its vanishing locus is exactly the set of points of $G$ where the rank condition~\eqref{eq:ift-pi} (resp.~\eqref{eq:ift-rho}) \emph{fails}:
\begin{align}
    V(\texttt{failure1}) &= \{p \in G : \rank(\partial_{\pairsofmetrics}f(p)) < 4\},\\
    V(\texttt{failure2}) &= \{p \in G : \rank(B(p)) < 6\}.
\end{align}

The minimal bases of $\langle\texttt{failure1}\rangle \cap G$ and $\langle\texttt{failure2}\rangle\cap G$, displayed first in Listing~\ref{lst:rankpirho}, give explicit equations for the two failure loci. In particular, the basis for $V(\texttt{failure1})$ shows that this locus forces $c_{12} = -c_{22}$ and $c_{11}=c_{22}$; together these give
\begin{equation}
    c_{11}c_{22} - c_{12}^2 = c_{22}^2 - c_{22}^2 = 0,
\end{equation}
so the leading $2\times 2$ principal minor of $c$ vanishes on $V(\texttt{failure1})$, and no point of this locus can correspond to a positive definite stiffness tensor. While this already shows $V(\texttt{failure1})$ is disjoint from $\analyticallyadmissibletensors_\pm$, we give the saturation argument below for a uniform treatment of both rank conditions.

To show that neither failure locus meets the domain of interest, we saturate each ideal by \texttt{detE}$\coloneqq \det(qI-Q)$ and, where necessary, by \texttt{detF}$\coloneqq \det(c)$. Recall that saturating an ideal~$I$ by~$J$ removes from~$V(I)$ any component contained in~$V(J)$; if the resulting ideal is the unit ideal $\langle 1\rangle$, the entire variety~$V(I)$ was contained in~$V(J)$.

Saturating \texttt{failure\_pi} by \texttt{detE} alone already yields
\begin{equation}
    \texttt{I2\_pi} = \langle 1 \rangle,
\end{equation}
so
\begin{equation}
    V(\texttt{failure1}) \subseteq \{\texttt{detE} = 0\}.
\end{equation}

Saturating \texttt{failure} by \texttt{detE} does not yet trivialize the ideal (\texttt{I2} remains positive-dimensional), but saturating further by \texttt{detF} does:
\begin{equation}
    \texttt{I3} = \langle 1 \rangle,
\end{equation}
so
\begin{equation}
    V(\texttt{failure2}) \subseteq \{\texttt{detE} = 0\} \cup \{\texttt{detF} = 0\}.
\end{equation}

Since~$\admissiblepairsofmetrics_\pm$ is defined so that ellipse and circle never touch $\texttt{detE} \neq 0$ throughout, and $\analyticallyadmissibletensors_\pm$ consists of positive definite stiffness tensors, for which $\texttt{detF} = \det(c) > 0$, both failure loci are disjoint from $G_\pm$. We conclude that
\begin{equation}
    \rank(\partial_{\pairsofmetrics}f(p)) = 4
    \quad\text{and}\quad
    \rank(B(p)) = 6
\end{equation}
hold at every point $p \in G_\pm$, as required.
\end{proof}

Having established smoothness allows us to prove Theorem~\ref{thm:how-many-stiffness-tensor-fields}:

\begin{proof}[Proof of Theorem~\ref{thm:how-many-stiffness-tensor-fields}]
    Write $(g,h)\coloneqq\phi^{*}\psi(c)=(\phi^{*}g_p,g_s)$, which by assumption is a
    smooth field of pairs of metrics on $\overline{\Omega}$, with $h=g_s$ at each point a
    positive multiple of the Euclidean metric. By definition,
    $c'\in\psi^{-1}(\phi^{*}\psi(c))$ if and only if $c'$ is a smooth admissible stiffness
    tensor field with $\psi(c'(x))=(g,h)(x)$ for every $x\in\overline{\Omega}$.

    Assume first that $\abs{v}_{\phi^{*}g_p}\geq\abs{v}_{g_s}$ at some
    $x\in\overline{\Omega}$ for some $v\in\R^2\setminus\{0\}$. Then
    $(g,h)(x)\notin\admissiblepairsofmetrics$, so no such $c'$ exists and
    $\#\psi^{-1}(\phi^{*}\psi(c))=0$. This proves~(1), and from now on we assume that
    $(g,h)$ takes its values in $\admissiblepairsofmetrics$.

    By Proposition~\ref{prop:components+psismooth}\,(2) the sets
    $\analyticallyadmissibletensors_\pm$ are the connected components of
    $\analyticallyadmissibletensors$. As $\overline{\Omega}$ is connected and $c'$ is
    continuous, $c'$ takes its values in exactly one of them. If $c'$ takes its values in
    $\analyticallyadmissibletensors_\pm$, then, $\psi_\pm$ being a bijection onto
    $\admissiblepairsofmetrics_\pm$, necessarily $c'=\psi_\pm^{-1}\circ(g,h)$; conversely
    this is a well defined element of the fibre as soon as $(g,h)$ takes its values in
    $\admissiblepairsofmetrics_\pm$, and it is then smooth, being the composition of the
    smooth field $(g,h)$ with the diffeomorphism $\psi_\pm^{-1}$ of
    Proposition~\ref{prop:components+psismooth}\,(3). Hence
    $\#\psi^{-1}(\phi^{*}\psi(c))\leq2$.

    Since $\admissiblepairsofmetrics_+=\admissiblepairsofmetrics$, the field
    $\psi_+^{-1}\circ(g,h)$ is always defined. By Remark~\ref{rem:isotropic-conformal},
    \begin{equation}\label{eq:EminusEminus}
        \admissiblepairsofmetrics\setminus\admissiblepairsofmetrics_-
        =
        \{(g_p,g_s)\in\admissiblepairsofmetrics ;
        g_p \text{ is a multiple of the identity}\},
    \end{equation}
    so $\psi_-^{-1}\circ(g,h)$ is defined precisely when $\phi^{*}g_p$ is nowhere a
    multiple of the identity. Whenever both are defined they are distinct, since at every
    $x\in\overline{\Omega}$ the fibre $\psi^{-1}\bigl((g,h)(x)\bigr)$ has two elements and
    $\det\bigl(\psi_+^{-1}(g,h)(x)\bigr)>\det\bigl(\psi_-^{-1}(g,h)(x)\bigr)$ by
    Lemma~\ref{lem:companion-normal-form}.

    Consequently $\#\psi^{-1}(\phi^{*}\psi(c))=2$ if $\phi^{*}g_p$ is nowhere a multiple of
    the identity, which proves~(3), and $\#\psi^{-1}(\phi^{*}\psi(c))=1$ otherwise, that
    is, if $\phi^{*}g_p$ is a multiple of the identity at some point, which proves~(2). As
    the three cases are mutually exclusive and exhaust all possibilities, the stated
    equivalences follow.
\end{proof}

\section{Characterization of admissible diffeomorphisms}
\label{sec:Diffeos}
 
As per Theorem~\ref{thm:how-many-stiffness-tensor-fields}, the number of stiffness tensor fields corresponding to a pullback of another one depends on whether the tensor is isotropic at any point.
The correspondence between stiffness tensors and pairs of metric tensors is broadly 2-to-1, encoded in the $\pm$ subscripts in the map $\psi_\pm$.
If there is an isotropic point (where the $p$-metric is a multiple of the identity), then we fall out of the range of the $\psi_-$ branch.
But there are also constraints for the $\psi_+$ branch, and we dive into them in this section.
We recall that the $+$ branch contains all isotropic stiffness tensors.
 
We address the question raised after proof of \thmref{thm:travelTimes}: Given a smooth stiffness tensor field
$c\colon\bar\Omega\to\analyticallyadmissibletensors$, with $\psi(c)=(g_p,g_s)$
which diffeomorphisms $\phi \colon \bar \Omega \to \bar \Omega$ satisfy
$\phi^*\psi(c)\in C^\infty(\bar\Omega,\admissiblepairsofmetrics)$ or in other words $\phi^\ast g_p<g_s$ in sense of quadratic forms? We call such maps admissible.
In the main result of this section, Theorem \ref{thm:admdiff:admissibility-preserving-diffeomorphisms}, we establish three equivalent conditions for admissibility. We also show that the set of admissible diffeomorphisms is open in~$C^1$ topology, and prove that this set is not a group.
 
Throughout this section, $\Omega\subset\R^2$ is a connected smooth
bounded domain, and all metrics and diffeomorphisms are smooth on
$\bar\Omega$. We fix $c \in \analyticallyadmissibletensors$ and write
\begin{equation}
  \psi(c)=(g_p,g_s)\in
  C^\infty(\bar\Omega,\admissiblepairsofmetrics),
  \qquad g_s(x)=\sigma(x)\Id.
\end{equation}
Thus $0<g_p(x)<\sigma(x)\Id$, where $\sigma$ is a smooth positive function on $\bar \Omega$. Hence forth also these metrics are fixed, and we identify the metrics with their
matrices in the Euclidean coordinates. We write $\|\cdot\|_{\op}$ for the Euclidean operator norm and $|\cdot|_g$ for a Riemannian norm. If $g$ is Euclidean then we simply denote the respective norm by $|\cdot|$.
 
As in \thmref{thm:how-many-stiffness-tensor-fields}, the pullback of a diffeomorhism $\phi$ acts only on the first metric:
\begin{equation}
  \phi^*(g_p,g_s)=(\phi^*g_p,g_s).
\end{equation}
Since, the matrix of a Riemannian metric is positive definite one can compute their square root matrices. Particularly, the matrix
\begin{equation}
    \label{eq:matrix_B}
  B_\phi(x)=\sigma(x)^{-1/2}g_p(\phi(x))^{1/2}D\phi(x)
\end{equation}
is the total derivative of $\phi$ expressed in the length units of $g_s$
in the domain and $g_p$ in the target space. More precisely, if
$v=w/\sqrt{\sigma(x)}$, then
\begin{equation}
    \label{eq:equivalence_of_norms_via_B}
  \abs{v}_{g_s(x)}=\abs{w},
  \qquad
  \text{and}
  \qquad
  \abs{D\phi(x)v}_{g_p(\phi(x))}=\abs{B_\phi(x)w}.
\end{equation}
Thus $B_\phi(x)$ maps a unit disk of a tangent plane at $x$, expressed in
$g_s$-orthonormal coordinates, to an ellipse in
$g_p$-orthonormal coordinates at $\phi(x)$. For the lengths semiaxis of this ellipse
we write $\alpha_\phi(x)\geq\beta_\phi(x)>0$. Naturally, these are also the singular values of $B_\phi(x)$.
 
We let
\begin{equation}
    \label{eq:char_of_q_phi}
  q_\phi
  :=\max_{x\in \bar \Omega}\alpha_\phi(x)
  =\max_{\substack{x\in\bar\Omega\\v\ne0}}
       \frac{\abs{D\phi(x)v}_{g_p(\phi(x))}}{\abs{v}_{g_s(x)}}.
\end{equation}
Indeed, for a symmetric and positive definite matrix $g$, its positive definite
square root satisfies $\abs{u}_g=\abs{g^{1/2}u}$. At a fixed point
$x\in \bar \Omega$, the substitution $w=\sqrt{\sigma(x)}v$ therefore gives
\begin{equation}
    \label{eq:q_phi_justification}
  \frac{\abs{D\phi(x)v}_{g_p(\phi(x))}}{\abs{v}_{g_s(x)}}
  =\frac{\abs{g_p(\phi(x))^{1/2}D\phi(x)v}}
         {\sqrt{\sigma(x)}\abs{v}}
    =\abs{B_\phi(x)\frac{v}{\abs{v}}}.
\end{equation}
Consequently,
\begin{equation}
    \label{eq:q_phi_justification_2}
  \max_{v\neq 0}
    \frac{\abs{D\phi(x)v}_{g_p(\phi(x))}}{\abs{v}_{g_s(x)}}
  =\|B_\phi(x)\|_{\op}
  =\alpha_\phi(x),
\end{equation}
and taking the maximum over $x\in \bar \Omega$ proves the expression~\eqref{eq:char_of_q_phi}.
 
The main result of this section is as follows.
\begin{theorem}[Characterization of admissible diffeomorphisms]
\label{thm:admdiff:admissibility-preserving-diffeomorphisms}
Let $\Omega\subset \R^2$ be connected, smooth and bounded. Choose
$(g_p,g_s)\in C^\infty(\bar\Omega,\admissiblepairsofmetrics)$ and let
$\phi\colon \bar\Omega\to\bar\Omega$ be a  diffeomorphism.
Then the following conditions are equivalent:
\begin{enumerate}
\item
The map $\phi$ is admissible:
\begin{equation}
  (\phi^*g_p,g_s)
  \in C^\infty(\bar\Omega,\admissiblepairsofmetrics).
\end{equation}
\item $q_\phi<1$.
\item
The map $\phi \colon (\bar \Omega,g_s) \to (\bar\Omega, g_p)$ is $q$-Lipschitz for some $q\in (0,1)$.
\end{enumerate}
\end{theorem}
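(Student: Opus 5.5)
The plan is to prove (1)$\Leftrightarrow$(2) pointwise via the matrix $B_\phi$ in~\eqref{eq:matrix_B}, and (2)$\Leftrightarrow$(3) by integrating along curves, with compactness of $\bar\Omega$ supplying the uniform constant throughout.

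\textbf{(1)$\Leftrightarrow$(2).} At a point $x\in\bar\Omega$ the pair $(\phi^*g_p,g_s)(x)$ lies in $\admissiblepairsofmetrics$ exactly when $\abs{v}_{\phi^*g_p(x)}<\abs{v}_{g_s(x)}$ for all $v\neq0$. Here $g_s$ is already a multiple of the identity, and $\phi^*g_p(x)$ is positive definite because $D\phi(x)$ is invertible. Since $\abs{v}_{\phi^*g_p(x)}=\abs{D\phi(x)v}_{g_p(\phi(x))}$, the identity~\eqref{eq:q_phi_justification_2} shows that this condition is equivalent to $\alpha_\phi(x)<1$. The function $x\mapsto\alpha_\phi(x)=\|B_\phi(x)\|_{\op}$ is continuous, and $\bar\Omega$ is compact, so the maximum $q_\phi$ is attained. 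Hence $\alpha_\phi<1$ everywhere if and only if $q_\phi<1$. Smoothness of the field $(\phi^*g_p,g_s)$ is automatic. This gives (1)$\Leftrightarrow$(2).

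\textbf{(2)$\Rightarrow$(3).} Take $q=q_\phi$, which is positive since $D\phi$ is invertible. For any piecewise $C^1$ curve $\gamma$ in $\bar\Omega$, the curve $\phi\circ\gamma$ satisfies
\[
\abs{(\phi\circ\gamma)'}_{g_p}=\abs{D\phi\,\gamma'}_{g_p}\le q_\phi\abs{\gamma'}_{g_s},
\]
so $L_{g_p}(\phi\circ\gamma)\le q_\phi L_{g_s}(\gamma)$. Taking the infimum over curves joining $x$ and $y$ gives $d_{g_p}(\phi(x),\phi(y))\le q_\phi\, d_{g_s}(x,y)$. Here $d$ denotes the intrinsic length distance on $\bar\Omega$; since $\phi$ is a diffeomorphism of $\bar\Omega$, it maps admissible curves to admissible curves.

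\textbf{(3)$\Rightarrow$(2).} Fix $x$ and $v$. If $x$ is interior, use the curve $\gamma(t)=x+tv$. If $x$ is a boundary point, use a curve in $\bar\Omega$ with $\gamma'(0)=v$; such curves exist for every direction in the closed half-plane, and the remaining directions follow by continuity or by linearity in $v$. The expansions $d_{g_s}(x,\gamma(t))=t\abs{v}_{g_s}+o(t)$ and $d_{g_p}(\phi(x),\phi(\gamma(t)))=t\abs{D\phi(x)v}_{g_p}+o(t)$ give $\abs{D\phi(x)v}_{g_p}\le q\abs{v}_{g_s}$. Taking the maximum over $x$ and $v$ yields $q_\phi\le q<1$.

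The main obstacle is this last step, specifically the small-scale asymptotics of the length distance near the boundary, where $\bar\Omega$ need not be convex. I would handle it by working in a boundary chart. There the intrinsic distance of $\bar\Omega$ agrees to first order with the metric norm of the displacement, because the domain is locally bi-Lipschitz to a half-plane with Lipschitz constant tending to $1$ at small scales.
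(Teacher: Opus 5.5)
Your proposal is correct and follows the paper's proof essentially step by step: (1)$\Leftrightarrow$(2) via the pointwise identity \eqref{eq:q_phi_justification_2}, (2)$\Rightarrow$(3) by integrating the pointwise bound along curves and taking the infimum, and (3)$\Rightarrow$(2) via the first-order expansion of the distance. The boundary-chart analysis you flag as the main obstacle is unnecessary. The paper proves the pointwise inequality only at interior points $x$, where $\phi(x)\in\Omega$ as well, and then extends it to $\partial\Omega$ by continuity of $D\phi$, $g_p$ and $g_s$, which you already mention in passing.
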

 
\begin{proof}
Since~(1) means $0<\phi^\ast g_p<g_s$ it follows from equation~\eqref{eq:char_of_q_phi} that conditions~(1) and~(2) are equivalent.
 
Suppose that~(2) holds. Clearly formula~\eqref{eq:char_of_q_phi} yields
\begin{equation}
  \abs{D\phi(x)v}_{g_p(\phi(x))}
  \leq q_\phi \abs{v}_{g_s(x)},
  \qquad \text{for all } x\in \bar \Omega, \: v\in \R^2.
\end{equation}
Hence, integrating along any piecewise smooth curve~$\gamma$ in~$\bar\Omega$ yields
\begin{equation}
  \operatorname{Length}_{g_p}(\phi\circ\gamma)
  \leq q_\phi\operatorname{Length}_{g_s}(\gamma).
\end{equation}
Taking the infimum over curves joining two prescribed points proves~(3), with $q=q_\phi<1$.
 
Suppose that~(3) holds and choose $x \in \Omega$. We first recall that  for any a smooth curve $\gamma$ in $\Omega$ with $\gamma(0)=x$
and $\dot\gamma(0)=v$, we have the following first order expansion for Riemannian distances:
\begin{equation}
  d_g(x,\gamma(t))=t\abs{v}_{g(x)}+o(t)
  \qquad\text{as }t\downarrow0,
  \qquad \text{and $g\in \{g_p,g_s\}$.}
\end{equation}
 
Now fix $v\in\R^2$, and choose a curve
$\gamma$ as above. Since $\phi(x)\in\Omega$ and
$(\phi\circ\gamma)'(0)=D\phi(x)v$, an application of the distance expansion to
$g_s,\gamma$ and to $g_p,\phi\circ\gamma$, with condition~(3) now yields
\begin{equation}
  \abs{D\phi(x)v}_{g_p(\phi(x))}\leq q\abs{v}_{g_s(x)}.
\end{equation}
This inequality extends to boundary points by continuity. Hence
$q_\phi\leq q<1$, follows from~\eqref{eq:char_of_q_phi} proving~(2).
The proof is complete.
\end{proof}
 
We use the following convention
\begin{equation}
  \|f\|_{C^1}
  :=
  \max\left\{
    \sup_{x\in\bar\Omega}\abs{f(x)},\:
    \sup_{x\in\bar\Omega}\|Df(x)\|_{\op}
  \right\},
\end{equation}
for the $C^1$-norm,
and show that the set of admissible diffeomorphisms is open in $C^1$ topology. This follows from then condition (2) of Theorem~\ref{thm:admdiff:admissibility-preserving-diffeomorphisms} as soon as we show that the map $\phi \mapsto q_\phi$ is continuous in $C^1$ norm. This is a content of the following lemma.
 
\begin{lemma}
\label{lem:admdiff:openness}
Let $\Omega\subset\R^2$ be a bounded smooth domain, and $
  (g_p,g_s)\in
  C^\infty(\bar\Omega,\admissiblepairsofmetrics).
$
The map $\phi\mapsto q_\phi$ is continuous in the $C^1$ topology of the space of all diffeomorphisms of $\bar \Omega$. Moreover, the set of all admissible diffeomorphisms of $\bar \Omega$
is open in the $C^1$ topology and contains the identity.
\end{lemma}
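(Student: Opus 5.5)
The plan is to write $q_\phi$ as the maximum over the compact set $\bar\Omega$ of the continuous function $x\mapsto\|B_\phi(x)\|_{\op}$, with $B_\phi$ as in~\eqref{eq:matrix_B}. Continuity of $\phi\mapsto q_\phi$ then reduces to a uniform estimate
\begin{equation}
\sup_{x\in\bar\Omega}\|B_\phi(x)-B_{\tilde\phi}(x)\|_{\op}\to0
\qquad\text{as }\|\phi-\tilde\phi\|_{C^1}\to0 .
\end{equation}
This suffices because $\abs{\max_x a(x)-\max_x b(x)}\le\sup_x\abs{a(x)-b(x)}$, and by the reverse triangle inequality $\abs{\|B_\phi(x)\|_{\op}-\|B_{\tilde\phi}(x)\|_{\op}}\le\|B_\phi(x)-B_{\tilde\phi}(x)\|_{\op}$.

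To get the estimate, I split
\begin{equation}
B_\phi-B_{\tilde\phi}
=\sigma^{-1/2}\bigl(g_p(\phi)^{1/2}-g_p(\tilde\phi)^{1/2}\bigr)D\phi
+\sigma^{-1/2}g_p(\tilde\phi)^{1/2}\bigl(D\phi-D\tilde\phi\bigr).
\end{equation}
The factors are controlled as follows.
\begin{itemize}
\item $\sigma^{-1/2}$ is bounded because $\sigma$ is smooth and positive on the compact set $\bar\Omega$.
\item $g_p^{1/2}$ is bounded and uniformly continuous on $\bar\Omega$. The map $M\mapsto M^{1/2}$ is continuous on positive definite matrices, and $g_p(\bar\Omega)$ is a compact subset of that cone.
\item $\|D\phi\|_{\op}\le\|\phi\|_{C^1}$, which stays bounded on a $C^1$-neighbourhood of a fixed $\tilde\phi$.
\end{itemize}
The second term is then bounded by a constant times $\sup\|D\phi-D\tilde\phi\|_{\op}$. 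The first term tends to zero uniformly because $\sup\abs{\phi-\tilde\phi}\to0$ and $g_p^{1/2}$ is uniformly continuous. Note that $\phi(x),\tilde\phi(x)\in\bar\Omega$, so only uniform continuity on $\bar\Omega$ is needed, and no convexity of $\Omega$ is required.

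Openness then follows from Theorem~\ref{thm:admdiff:admissibility-preserving-diffeomorphisms}: the admissible diffeomorphisms form the set $\{\phi;\ q_\phi<1\}$, which is the preimage of the open set $(-\infty,1)$ under a continuous map. For the identity, $B_{\mathrm{id}}(x)=\sigma(x)^{-1/2}g_p(x)^{1/2}$. Because $g_p(x)<\sigma(x)\Id$, the largest eigenvalue of $g_p(x)$ is below $\sigma(x)$, so $\|B_{\mathrm{id}}(x)\|_{\op}<1$ at every point. By compactness and continuity the maximum satisfies $q_{\mathrm{id}}<1$.

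The main obstacle is the uniform continuity of $x\mapsto g_p(x)^{1/2}$ in the first term. I would handle it directly by the continuity of the matrix square root on the positive definite cone restricted to a compact set, rather than by any explicit Lipschitz estimate for the square root.
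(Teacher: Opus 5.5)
Your proof is correct and takes essentially the same route as the paper. Both write $q_\phi=\max_x\|B_\phi(x)\|_{\op}$, split $B_\phi-B_{\tilde\phi}$ into a term controlled by the uniform continuity of $g_p^{1/2}$ on $\bar\Omega$ and a term controlled by $\sup\|D\phi-D\tilde\phi\|_{\op}$, and then compare maxima; your split carries $D\phi$ rather than $D\tilde\phi$ in the first term, which only costs the local bound on $\|D\phi\|_{\op}$ you already note. You also spell out two things the paper's proof leaves implicit: that $\{\phi;\ q_\phi<1\}$ is open, and that $q_{\mathrm{id}}<1$ follows from $g_p<\sigma\Id$.
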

 
\begin{proof}
We fix an arbitrary diffeomorphism
$\phi_0$  of $\bar \Omega$.
We use a shorthand notation
$
  S(x)=g_p(x)^{1/2},
$
and define
\begin{equation}
\sigma_{\min}=\min_{x \in \bar \Omega}\sigma(x),
\qquad
  M=\max_{x\in\bar\Omega}\|S(x)\|_{\op},
  \qquad
  N=\max_{x\in\bar\Omega}\|D\phi_0(x)\|_{\op}.
\end{equation}
Compactness implies that $S$ is
uniformly continuous on $\bar \Omega$. In particular, its modulus of continuity
\begin{equation}
  \omega_S(r)
  :=
  \sup_{\substack{x,y\in\bar\Omega\\ \abs{x-y}\leq r}}
       \|S(y)-S(x)\|_{\op}
\end{equation}
satisfies $\omega_S(r)\to0$ as $r\downarrow0$.
 
For another diffeomorphism $\phi$, we write
\begin{equation}
  d_0=\sup_{x\in\bar\Omega}\abs{\phi(x)-\phi_0(x)},
  \qquad
  d_1=\sup_{x\in\bar\Omega}
           \|D\phi(x)-D\phi_0(x)\|_{\op}.
\end{equation}
Hence, the definition of $\omega_S$ gives
\begin{equation}
  \|S(\phi(x))-S(\phi_0(x))\|_{\op}
  \leq\omega_S(d_0).
\end{equation}
 
Then we use the equality
\begin{align}
  B_\phi(x)-B_{\phi_0}(x)
  ={}&
  \sigma(x)^{-1/2}
  \bigl(S(\phi(x))-S(\phi_0(x))\bigr)D\phi_0(x)\\
  &+\sigma(x)^{-1/2}S(\phi(x))
       \bigl(D\phi(x)-D\phi_0(x)\bigr),
\end{align}
with the triangle inequality and submultiplicativity of the operator
norm to find that
\begin{align}
  \|B_\phi(x)-B_{\phi_0}(x)\|_{\op}
  \leq{}&
  \frac{N\omega_S(d_0)+Md_1}{\sqrt{\sigma_{\min}}}.
\end{align}
 
To pass from this estimate to the maxima defining $q_\phi$
and $q_{\phi_0}$, we observe that for every $x \in \bar \Omega$, we have due to~\eqref{eq:char_of_q_phi} and~\eqref{eq:q_phi_justification_2} that
\begin{equation}
  \|B_\phi(x)\|_{\op}
  \leq q_{\phi_0}+ \|B_\phi(x)-B_{\phi_0}(x)\|_{\op}
  \leq
    q_{\phi_0}+\frac{N\omega_S(d_0)+Md_1}{\sqrt{\sigma_{\min}}}.
\end{equation}
Hence, after taking the maximum over $x\in \bar \Omega$ and interchaning the roles of $\phi$ and $\phi_0$ we get
\begin{equation}
  \abs{q_\phi-q_{\phi_0}}
  \leq
  \frac{N\omega_S(d_0)+Md_1}{\sqrt{\sigma_{\min}}}.
\end{equation}
Since $\|\phi-\phi\|_{C^1}=\max\{d_0,d_1\}$ we have proved the
required continuity.
\end{proof}
 
We conclude this section by showing that the set of admissible diffeomorphisms is not a group.
 
\begin{proposition}
\label{prop:admdiff:not-a-subgroup}
For a fixed admissible pair $(g_p,g_s)$, the set of
admissible boundary fixing diffeomorphisms of $\bar \Omega$
is open in the relative $C^1$ topology and contains the identity, but is not a subgroup of all boundary fixing diffeomorphisms of $\bar \Omega$.
\end{proposition}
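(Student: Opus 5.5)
Openness and containing the identity follow from Lemma~\ref{lem:admdiff:openness}. The boundary fixing diffeomorphisms form a subset of all diffeomorphisms with the induced $C^1$ topology. Intersecting the open admissible set with this subset gives a relatively open set. The identity is admissible because $q_{\operatorname{id}}=\max_x\|\sigma(x)^{-1/2}g_p(x)^{1/2}\|_{\op}<1$, which is exactly the condition $g_p<g_s$ at every point of the compact set $\bar\Omega$.

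To show the set is not a subgroup, I would show it is not closed under composition. By Theorem~\ref{thm:admdiff:admissibility-preserving-diffeomorphisms}, $\phi$ is admissible iff $|D\phi(x)v|_{g_p(\phi(x))}<|v|_{g_s(x)}$ for all $x$ and $v\neq0$. Let $\phi$ be admissible and write $\lambda(x)^2<1$ for the largest eigenvalue of $\sigma(x)^{-1}g_p(x)$, with $\lambda<1$ uniformly on $\bar\Omega$. Iterating $\phi^n$ does not give a clean contradiction, because the domain norm is $g_s$ while the target norm is $g_p$, and these do not chain. The cleaner route is to use inverses.

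\textbf{Step 1: inverses.} Suppose both $\phi$ and $\phi^{-1}$ are admissible. At $y=\phi(x)$, with $w=D\phi(x)v$, this gives two inequalities:
\begin{equation}
|w|_{g_p(y)}<|v|_{g_s(x)}
\quad\text{and}\quad
|v|_{g_p(x)}<|w|_{g_s(y)}.
\end{equation}
These are compatible, so inverses alone are not obstructed. I would therefore produce a composition that fails.

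\textbf{Step 2: composition.} Near an interior point $x_0$, pick a direction $e$. I would build a boundary fixing diffeomorphism $\phi$ that stretches $e$ by a factor $t$ and contracts the orthogonal direction, supported in a small ball. Its derivative at $x_0$ would be close to $A=\operatorname{diag}(t,s)$ in coordinates adapted to $g_p(x_0)$ and $g_s(x_0)$.

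The composition $\phi\circ\phi$ then has derivative about $A^2$ at $x_0$, with a fixed point at $x_0$. Admissibility of a map with derivative $L$ at a fixed point $x_0$ requires $\|g_p(x_0)^{1/2}L\sigma(x_0)^{-1/2}\|_{\op}<1$. Choose $t$ with $\|g_p^{1/2}\sigma^{-1/2}\|\,t<1$ but $\|g_p^{1/2}\sigma^{-1/2}\|\,t^2>1$. This needs $t>1$, which is possible because $\lambda(x_0)<1$: for example take $t$ slightly below $1/\lambda(x_0)$, with $\lambda(x_0)<1/t<1$, so that $t^2>1/\lambda(x_0)$ once $t>\lambda(x_0)^{-1/2}$. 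The other direction is scaled by a small $s$.

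\textbf{Main obstacle.} The hard part is realising such a $\phi$ as a global boundary fixing diffeomorphism that is admissible everywhere, not only at $x_0$. Admissibility must hold in the transition region of the bump, where the derivative interpolates between $A$ and $I$. I would first try the time-one flow of a vector field $\chi(x)\,(x-x_0)$ restricted to a rescaled linear field $\operatorname{diag}(\log t,\log s)$, with a cutoff $\chi$. Using a very flat cutoff over a large annulus keeps the error terms from $D\chi$ small, so the singular values stay near $t$ and $s$ and remain interpolated below $1/\lambda$.

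If the domain is too small for this, I would instead use the $\max$ characterization~\eqref{eq:char_of_q_phi} and only require $q_\phi<1$, working with a radial stretch whose derivative has operator norm at most $t$ everywhere. This is a monotone radial profile $r\mapsto f(r)$ with $f'\le t$ and $f(r)/r\le t$. The square $\phi\circ\phi$ then has $\|D(\phi\circ\phi)(x_0)\|=t^2$, which gives $q_{\phi\circ\phi}>1$.
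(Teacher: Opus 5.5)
Your argument is correct once you commit to the radial-dilation construction, but it takes a different route from the paper's. The openness part is the same in both. For non-closure, the paper never builds a diffeomorphism far from the identity. It takes the flow $\Phi_t$ of $X(x)=\chi(x)(x_1-x_{0,1},0)$. This flow fixes $x_0$ and a neighbourhood of $\partial\Omega$, satisfies $D\Phi_t(x_0)=\operatorname{diag}(e^t,1)$, and tends to the identity in $C^1$. Hence $\Phi_t$ is admissible for small $t$ at no cost, by Lemma~\ref{lem:admdiff:openness}. Then $\Phi_t^m=\Phi_{mt}$, and $((\Phi_t^m)^*g_p)_{x_0}(e_1,e_1)=e^{2mt}(g_p)_{x_0}(e_1,e_1)$ eventually exceeds $(g_s)_{x_0}(e_1,e_1)$.

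Your worry that iterates ``do not chain'' disappears at a fixed point. There $D(\phi^n)(x_0)=D\phi(x_0)^n$, and the admissibility test at $x_0$ involves only $g_p(x_0)$ and $\sigma(x_0)$. This is exactly what your own Step~2 uses with $n=2$. Letting $n$ be large is what removes your ``main obstacle'' in the paper's proof.

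By insisting on failure already at $\phi\circ\phi$, you need $D\phi(x_0)$ of size $t\in(\lambda(x_0)^{-1/2},\lambda(x_0)^{-1})$, far from the identity. Admissibility of $\phi$ then has to be verified by hand. Your fallback does this cleanly; you should drop Step~1 and the flat-cutoff flow and make the fallback the argument. Take $\phi(x)=x_0+f(r)(x-x_0)/r$ with $r=\abs{x-x_0}$ and $f(r)=rh(r)$. Here $h$ is smooth and nonincreasing, equals $t$ near $0$ and $1$ near $r_2$, and decays slowly enough that $h+rh'>0$. This is possible if the transition occupies an interval with ratio of endpoints larger than $t$. Then $\|D\phi\|_{\op}=\max(f',f/r)\le t$ everywhere and $D\phi(x_0)=tI$.

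You should state explicitly that $\bar B(x_0,r_2)\subset\Omega$ and that $r_2$ is chosen so small that
$t\sup_{x,y\in B(x_0,r_2)}\sqrt{\|g_p(y)\|_{\op}/\sigma(x)}<1$. This gives $q_\phi<1$, while $\|B_{\phi\circ\phi}(x_0)\|_{\op}=\lambda(x_0)t^2>1$.

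The paper's approach is shorter and needs no quantitative control beyond openness. Yours yields the slightly sharper fact that the admissible set is not even closed under squaring. The price is an explicit global bound on $D\phi$ and a localization of $g_p$ and $\sigma$.
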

\begin{proof}
The openness follows from Lemma~\ref{lem:admdiff:openness}
 
Choose an interior point $x_0\in \Omega$ and a function
$\chi\in C_c^\infty(\Omega)$ that equals~$1$ in a neighborhood
of~$x_0$. We define a smooth compactly supported vector field
\begin{equation}
  X(x)=\chi(x)\begin{pmatrix}x_1-x_{0,1}\\0\end{pmatrix}.
\end{equation}
Hence,
\begin{equation}
  X(x_0)=0,
  \qquad
  DX(x_0)=\begin{pmatrix}1&0\\0&0\end{pmatrix}.
\end{equation}
 
Let $\Phi_t(x)$ be the flow of~$X$. Since~$X$ is smooth and compactly supported,~$\Phi$ is smooth in $(t,x)\in \R\times \R^2$ and for each $x \in \R^2$ the function $t\mapsto\Phi_t(x)$ is a global solution to the initial-value problem
\begin{equation}
  \frac{\partial}{\partial t}\Phi_t(x)=X(\Phi_t(x)),
  \qquad \Phi_0(x)=x.
\end{equation}
Furthermore, for all $t\in \R$ $x\mapsto \Phi_t(x)$ is a smooth diffeomorphism of $\R^2$.
 
By uniqueness of initial value problems $\Phi_t$ fixes every point where $X$ vanishes. Particularly, $x_0$ and a neighborhood of
$\partial\Omega$. Hence, $\Phi_t$ is a smooth
diffeomorphism of $\bar\Omega$ that fixes the boundary. On the other hand smooth dependence on $(t,x)$ as well as the  compactness of $\bar\Omega$ imply $\Phi_t\to\operatorname{id}$ in $C^1$ as $t\to0$.
For sufficiently small $t>0$, openness therefore gives the admissibility of
$\Phi_t$.
 
To compute $D\Phi_t(x_0)$ we first apply the chain rule and get for all $x \in \R^2$ that
\begin{equation}
  \frac{\partial}{\partial t}D\Phi_t(x)
  =DX(\Phi_t(x))D\Phi_t(x),
  \qquad D\Phi_0(x)=\Id.
\end{equation}
Since $\Phi_t(x_0)=x_0$, evaluation of this equation at $x=x_0$ yields that
$
D\Phi_t(x_0)
$
solves the constant-coefficient matrix equation
\begin{equation}
  A'(t)=\begin{pmatrix}1&0\\0&0\end{pmatrix}A(t),
  \qquad A(0)=\Id
  \qquad \Rightarrow \qquad
  D\Phi_t(x_0)=\begin{pmatrix}e^t&0\\0&1\end{pmatrix}.
\end{equation}
 
Therefore, for every $m \in \mathbb{Z}$ and $t>0$ we have that $\Phi_t^m=\Phi_{mt}$, and
\begin{equation}
  ((\Phi_t^m)^*g_p)_{x_0}(e_1,e_1)
  =e^{2mt}(g_p)_{x_0}(e_1,e_1).
\end{equation}
This exceeds $(g_s)_{x_0}(e_1,e_1)$ for sufficiently large $m$.
Thus some positive iterate of an admissible map $\Phi_t$ is not admissible,
which rules out the subgroup property.
\end{proof}

\appendix

\section{Brief introduction to algebraic geometry}
\label{app:AG}

As many of the readers may be unfamiliar with algebraic geometry, we give a crash course in this appendix with a specific focus on the tools and ideas we apply.
For a fuller introduction to the field to readers with a background in analysis, we recommend~\cite{invitation-to-AG}.

A (commutative) ring is a set with two compatible operations: addition and multiplication.
A simple example of a ring is a familiar field like $\R$ or $\C$.
The rings we use will mostly be polynomial rings, such as the ring $\C[x,y]$ of all polynomial maps $\C^2\to\C$, which can also be identified as abstract polynomials in the two\footnote{The number of variables can be anything, but we stick with two for concreteness.} variables $x$ and $y$ with coefficients in $\C$.
The set of polynomials becomes a ring under pointwise operations on the polynomial functions.

An ideal $J$ is a subset of a ring $R$ that is closed under addition ($p,q\in J\implies p+q\in J$) and absorbing under multiplication ($p\in J,q\in R\implies pq\in J$).
Any set $S\subset\C^2$ gives rise to an ideal
\begin{equation}
\I(S)
=
\{
f\in\C[x,y]
;
f|_S=0
\}
\end{equation}
of polynomial functions vanishing on $S$.
It will be most natural to apply this map $\I$ to varieties, which here simply means sets defined by any number of polynomial equations.
Corresponding to an ideal $J$ we have the corresponding vanishing set
\begin{equation}
\V(J)
=
\{
(x,y)\in\C^2
;
f(x,y)=0
\text{ for all }
f\in J
\}
,
\end{equation}
which is a variety.

A variety is a geometric concept, while an ideal is an algebraic one.
The map~$\V$ taking~$J$ to~$\V(J)$ is a map from algebra to geometry, and the map~$\I$ taking~$S$ to~$\I(S)$ vice versa.
These maps are almost inverses of each other, namely:
\begin{itemize}
\item
For any set $S\subset\C^2$, the set $\V(\I(S))\subset\C^2$ is closure of~$S$ in the Zariski topology.
(In fact, this closure operator uniquely determines the topology.)
\item
For any ideal $J\subset\C[x,y]$, the ideal~$\I(\V(J))$ is what is called the radical~$\sqrt{J}$ of~$J$.
(The radical is the set of those ring elements that have an integer power in~$J$. These elements form an ideal, and~$J$ is said to be radical when $J=\sqrt{J}$. The radical describes the same vanishing set with the simplest equations possible.)
\end{itemize}
We do get unique correspondence between Zariski-closed sets in~$\C^2$ and radical ideals of the polynomial of $\C[x,y]$.
This correspondence is at the heart of elementary algebraic geometry.

An ideal $J\subset R$ is generated by some elements $q_1,q_2,\dots$ in the sense that~$J$ is the smallest ideal containing all these.
This is equivalent with elements of~$J$ being finite $R$-linear combinations of the form $r_1q_1+r_2q_2+\dots$.
As our rings are Noetherian, all rings are finitely generated.
The minimal number of generators does not in general match the codimension of the variety, so generators of ideals do not behave like a basis of a subspace of a vector space.
We do at times pick a minimal basis for a more minimalistic description or a Gröbner basis to make certain calculations possible.
Passing from a general basis to a Gröbner basis is vaguely analogous to passing from a general basis to an orthonormal one in linear algebra, but with the surprising difference that the basis can grow substantially in this process.

What we described above is an affine variety, as it is a subspace of the affine space~$\C^2$.
It is often convenient to enlarge the affine space~$\C^2$ into the projective space~$\C\ps^2$, defined as the quotient of $\C^3\setminus\{0\}$ by identifying~$a$ and~$b$ when $a=\lambda b$ for $\lambda\in\C$.
Points are written as $(x,y)\in\C^2$ and $[x:y:z]\in\C\ps^2$, so that $(1,2)\neq(3,6)$ but $[1:2:0]=[3:6:0]$.
Polynomials defining a variety in a projective space need to be homogeneous so that the variety is well defined.
The affine space embeds into the projective space naturally as $(x,y)\xmapsto{\iota}[x:y:1]$, so~$\C\ps^2$ is a compactification of~$\C^2$ whose halo is $\{[x:y:0];[x,y]\in\C\ps^1\}\approx\C\ps^1$.
An affine variety has a projective closure, determined by minimally extending it from~$\C^2$ to~$\C\ps^2$ via the use of Gröbner bases.

Many of our varieties live in product spaces.
Suppose~$X$ and~$Y$ are affine or projective spaces and consider the projection $\pi\colon X\times Y\to X$.
If we have a variety $V\subset X\times Y$, we may want to understand the image $\pi(V)\subset X$.
Is it a variety?
If yes, which polynomial conditions describe it?

In general,~$\pi(V)$ is a variety when~$Y$ is a projective space but not always when~$Y$ is an affine space.
That is, projecting out a projective space is clean.
If~$Y$ is affine, points projecting from infinity (the projective closure minus the original affine manifold) belong to the closure (in either topology) of~$\pi(V)$ but not to~$\pi(V)$.
That is why projecting out an affine space comes at the cost of analyzing the ``ghost points projecting from infinity''.
The generators of the so-called elimination ideal~$\I(\pi(V))$ can be computed by elimination theory, and this, too, is powered by Gröbner bases.

A variety may or may not be smooth.
We use the so-called Jacobi criterion to determine smoothness:
if the Jacobi matrix of the polynomial conditions has constant rank, then the variety is smooth and of constant dimension.

Many things in algebra work better over complex than real numbers.
Even though the physical problems at hand are real, they are best treated by relaxing all parameters to be complex and manually restricting the outcome of the algebraic machinery to the real setting.
This, fortunately, is not an issue in the present paper.

\section{Rotation and reflection actions on stiffness tensors}
\label{ap:rotrefl}

In Voigt notation
\begin{equation}
  \rotC{\theta}c
  \;\cong\;
  \begin{bmatrix}
    (\rotC{\theta}c)_{11} & (\rotC{\theta}c)_{12} & (\rotC{\theta}c)_{13}\\
    (\rotC{\theta}c)_{12} & (\rotC{\theta}c)_{22} & (\rotC{\theta}c)_{23}\\
    (\rotC{\theta}c)_{13} & (\rotC{\theta}c)_{23} & (\rotC{\theta}c)_{33}
  \end{bmatrix},
\end{equation}
with
 \begin{align}
 \begin{aligned}\label{eq:rotCexplicit}
  (\rotC{\theta}c)_{11}
  ={}& \cos^4(\theta)c_{11}
     - 4\sin(\theta)\cos^3(\theta)c_{13}
     + 2\sin^2(\theta)\cos^2(\theta)\bigl(c_{12}+2c_{33}\bigr)\\
   &\quad
     - 4\sin^3(\theta)\cos(\theta)c_{23}
     + \sin^4(\theta)c_{22},\\
  (\rotC{\theta}c)_{12}
  ={}& \cos^4(\theta)c_{12}
     + 2\sin(\theta)\cos^3(\theta)\bigl(c_{13}-c_{23}\bigr)\\
     &\quad+ \sin^2(\theta)\cos^2(\theta)\bigl(c_{11}+c_{22}-4c_{33}\bigr)\\
   &\quad
     - 2\sin^3(\theta)\cos(\theta)\bigl(c_{13}-c_{23}\bigr)
     + \sin^4(\theta)c_{12},\\
  (\rotC{\theta}c)_{13}
 ={}&
 4\cos^4(\theta)c_{13}
 +\sin(\theta)\bigl(c_{11}-2c_{12}-4c_{33}\bigr)\cos^3(\theta)
 \\
 &\quad
 +\bigl(4\sin^2(\theta)c_{23}-3c_{13}\bigr)\cos^2(\theta)
 \\
 &\quad
 +\sin(\theta)\bigl(-c_{22}\sin^2(\theta)+c_{12}+2c_{33}\bigr)\cos(\theta)
 \\
 &\quad
 -\sin^2(\theta)c_{23},\\
  (\rotC{\theta}c)_{22}
  ={}& \sin^4(\theta)c_{11}
     + 4\sin^3(\theta)\cos(\theta)c_{13}
     + 2\sin^2(\theta)\cos^2(\theta)\bigl(c_{12}+2c_{33}\bigr)\\
   &\quad
     + 4\sin(\theta)\cos^3(\theta)c_{23}
     + \cos^4(\theta)c_{22},\\
  (\rotC{\theta}c)_{23}
 ={}&
 4\cos^4(\theta)c_{23}
 +2\left(c_{12}-\frac{c_{22}}{2}+2c_{33}\right)
 \sin(\theta)\cos^3(\theta)
 \\
 &\quad
 +\bigl(4\sin^2(\theta)c_{13}-3c_{23}\bigr)\cos^2(\theta)
 \\
 &\quad
 +\sin(\theta)\bigl(c_{11}\sin^2(\theta)-c_{12}-2c_{33}\bigr)
 \cos(\theta)
 \\
 &\quad
 -\sin^2(\theta)c_{13},\\
  (\rotC{\theta}c)_{33}
  ={}& \cos^4(\theta)c_{33}
     + 2\sin(\theta)\cos^3(\theta)\bigl(c_{13}-c_{23}\bigr)\\
     &\quad+ \sin^2(\theta)\cos^2(\theta)\bigl(c_{11}+c_{22}-2c_{12}-2c_{33}\bigr)\\
   &\quad
     - 2\sin^3(\theta)\cos(\theta)\bigl(c_{13}-c_{23}\bigr)
     + \sin^4(\theta)c_{33}.
     \end{aligned}
 \end{align}

 Likewise,
\begin{equation}
  \reflC{\theta}c
  \;\cong\;
  \begin{bmatrix}
    (\reflC{\theta}c)_{11} & (\reflC{\theta}c)_{12} & (\reflC{\theta}c)_{13}\\
    (\reflC{\theta}c)_{12} & (\reflC{\theta}c)_{22} & (\reflC{\theta}c)_{23}\\
    (\reflC{\theta}c)_{13} & (\reflC{\theta}c)_{23} & (\reflC{\theta}c)_{33}
  \end{bmatrix},
\end{equation}
with
 \begin{align}
  (\reflC{\theta}c)_{11}
  ={}& \cos^4(2\theta)c_{11}
     + 4\sin(2\theta)\cos^3(2\theta)c_{13}
     + 2\sin^2(2\theta)\cos^2(2\theta)\bigl(c_{12}+2c_{33}\bigr)\\
   &\quad
     + 4\sin^3(2\theta)\cos(2\theta)c_{23}
     + \sin^4(2\theta)c_{22},\\
  (\reflC{\theta}c)_{12}
  ={}& \cos^4(2\theta)c_{12}
     - 2\sin(2\theta)\cos^3(2\theta)\bigl(c_{13}-c_{23}\bigr)\\
     &\quad+ \sin^2(2\theta)\cos^2(2\theta)\bigl(c_{11}+c_{22}-4c_{33}\bigr)\\
   &\quad
     + 2\sin^3(2\theta)\cos(2\theta)\bigl(c_{13}-c_{23}\bigr)
     + \sin^4(2\theta)c_{12},\\
  (\reflC{\theta}c)_{13}
 ={}&
 -4\cos^4(2\theta)c_{13}
 +\sin(2\theta)\bigl(c_{11}-2c_{12}-4c_{33}\bigr)\cos^3(2\theta)
 \\
 &\quad
 -\bigl(4\sin^2(2\theta)c_{23}-3c_{13}\bigr)\cos^2(2\theta)
 \\
 &\quad
 +\sin(2\theta)\bigl(-c_{22}\sin^2(2\theta)+c_{12}+2c_{33}\bigr)\cos(2\theta)
 \\
 &\quad
 +\sin^2(2\theta)c_{23},\\
  (\reflC{\theta}c)_{22}
  ={}& \sin^4(2\theta)c_{11}
     - 4\sin^3(2\theta)\cos(2\theta)c_{13}
     + 2\sin^2(2\theta)\cos^2(2\theta)\bigl(c_{12}+2c_{33}\bigr)\\
   &\quad
     - 4\sin(2\theta)\cos^3(2\theta)c_{23}
     + \cos^4(2\theta)c_{22},\\
  (\reflC{\theta}c)_{23}
 ={}&
 -4\cos^4(2\theta)c_{23}
 +2\left(c_{12}-\frac{c_{22}}{2}+2c_{33}\right)
 \sin(2\theta)\cos^3(2\theta)
 \\
 &\quad
 -\bigl(4\sin^2(2\theta)c_{13}-3c_{23}\bigr)\cos^2(2\theta)
 \\
 &\quad
 +\sin(2\theta)\bigl(c_{11}\sin^2(2\theta)-c_{12}-2c_{33}\bigr)
 \cos(2\theta)
 \\
 &\quad
 +\sin^2(2\theta)c_{13}\\
   (\reflC{\theta}c)_{33}
  ={}& \cos^4(2\theta)c_{33}
     - 2\sin(2\theta)\cos^3(2\theta)\bigl(c_{13}-c_{23}\bigr)\\
     &\quad+ \sin^2(2\theta)\cos^2(2\theta)\bigl(c_{11}+c_{22}-2c_{12}-2c_{33}\bigr)\\
   &\quad
     + 2\sin^3(2\theta)\cos(2\theta)\bigl(c_{13}-c_{23}\bigr)
     + \sin^4(2\theta)c_{33}.
 \end{align}

\section{Extended proofs}
\label{app:long-proofs}

This appendix is devoted to giving detailed proofs to complement some of the very condensed ones we gave above.
The extended proofs contain much more exposition and intermediate steps.

\begin{proof}[Detailed proof of Proposition \ref{prop:alg-admissible}]
In the following we treat all coefficients and components as complex numbers and study the real implications later.
    A general slowness polynomial is of the form
    \begin{align}\label{eq:slownessgeneral}
    \begin{aligned}
        \spoly{c}(x,\xi)&=(c_{11}c_{33} - c_{13}^2)\xi_1^4+ (2c_{11}c_{23} - 2c_{12}c_{13})\xi_1^3\xi_2\\
        &\hspace{6mm}+ (c_{11}c_{22} - c_{12}^2 - 2c_{12}c_{33} + 2c_{13}c_{23})\xi_1^2\xi_2^2
        - (c_{11} + c_{33})\xi_1^2\\
        &\hspace{6mm}+ (-2c_{12}c_{23} + 2c_{13}c_{22})\xi_1\xi_2^3- (2c_{13} + 2c_{23})\xi_1\xi_2\\
        &\hspace{6mm}+ (c_{22}c_{33} - c_{23}^2)\xi_2^4- (c_{22} + c_{33})\xi_2^2+ 1.
        \end{aligned}
    \end{align}
    For the slowness polynomial $\spoly{c}(x,\xi)$ to vanish on a circle it must be divisible by the polynomial $\xi_1^2+\xi_2^2-r^2$ for some $r\in\C$. As $\spoly{c}$ has degree 4, the other factor must also be quadratic.
    Therefore
    \begin{equation}
        \spoly{c}(x,\xi)=(\xi_1^2+\xi_2^2-r^2)(k_{20}\xi_1^2 + k_{11}\xi_1\xi_2 + k_{02}\xi_2^2 + k_{10}\xi_1 + k_{01}\xi_2+ k_{00}),
    \end{equation}
    with some coefficients $k_{ij}$.
    Expanding $\spoly{c}$ above and matching the general expression for $\spoly{c}$ in~\eqref{eq:slownessgeneral} gives a finite set of polynomial conditions in the variables $k_{ij}$ with $i,j=0,1,2$, the radius $r$ and entries of the stiffness tensor $c$. These polynomial conditions generate an ideal $I$ in the polynomial ring $\C[c,r,k]$.
    This ideal represents all algebraic constraints that the coefficients $k_{ij}$, $r$ and the entries of the stiffness tensor must satisfy for $\spoly{c}$ to match the target polynomial.
    
    To study the solutions of this system (i.e., the possible values of~$c$, $k$ and~$r$), we compute a Gröbner basis for~$I$ with respect to a graded reverse lexicographic monomial order, first in~$k_{ij}$ and~$r$ and then~$c$ (see the corresponding magma code in Listing \ref{lst:magma1}):
    \begin{equation}
\label{eq:original-mixed-conditions}
\begin{split}
    &
    r^2 k_{00} + 1, \\&
    c_{11}c_{13}c_{23}r^2 - c_{11}c_{12}c_{33}r^2 + c_{12}k_{00} + c_{11}c_{12} - c_{13}^2 - c_{13}c_{23} + c_{12}c_{33}, \\&
    c_{13}^2r^2 - c_{11}c_{33}r^2 + k_{00} + c_{11} + c_{33}, \\&
    c_{13}c_{22}r^2 - c_{12}c_{23}r^2 - c_{13} - c_{23}, \\&
    c_{23}^2r^2 - c_{22}c_{33}r^2 + k_{00} + c_{22} + c_{33}, \\&
    k_{00}^2 + c_{22}k_{00} + c_{33}k_{00} - c_{23}^2 + c_{22}c_{33}, \\&
    c_{12}c_{23}k_{00} + c_{22}c_{23}k_{00} - c_{13}c_{23}^2 - c_{23}^3 + c_{12}c_{23}c_{33} + c_{22}c_{23}c_{33}, \\&
    c_{11}k_{00} - c_{22}k_{00} - c_{13}^2 + c_{23}^2 + c_{11}c_{33} - c_{22}c_{33}, \\&
    c_{13}k_{00} + c_{23}k_{00} + c_{13}c_{22} - c_{12}c_{23}, \\&
    k_{20} + c_{13}^2 - c_{11}c_{33}, \\&
    k_{11} - 2c_{13}c_{22} + 2c_{12}c_{23}, \\&
    k_{02} + c_{23}^2 - c_{22}c_{33}, \\&
    k_{10}, \\&
    k_{01}, \\&
    c_{13}^3 + c_{11}c_{13}c_{22} - c_{13}c_{22}^2 - c_{11}c_{12}c_{23} + c_{13}^2c_{23} + c_{12}c_{22}c_{23} - c_{13}c_{23}^2 \\&
    \hspace{6mm}- c_{23}^3 - c_{11}c_{13}c_{33} + c_{13}c_{22}c_{33} - c_{11}c_{23}c_{33} + c_{22}c_{23}c_{33}, \\&
    c_{12}^2 - c_{13}^2 - c_{11}c_{22} - 2c_{13}c_{23} - c_{23}^2 + c_{11}c_{33} + 2c_{12}c_{33} + c_{22}c_{33}, \\&
    c_{12}c_{13} + c_{13}c_{22} - c_{11}c_{23} - c_{12}c_{23}.
\end{split}
\end{equation}
These are conditions on $(c,k,r)$, whereas we would like to have conditions on $c$ only. This can be achieved by elimination theory, eliminating $k$ and $r$.
This corresponds geometrically to projecting the set defined by the vanishing of the polynomials~\eqref{eq:original-mixed-conditions} from the product space of $(c,k,r)$ to the space of $c$ alone.
Given the monomial order used above, the Gr\"obner basis of the elimination ideal consists of the last three generators listed above in~\eqref{eq:original-mixed-conditions}.

The vanishing set of the elimination ideal gives the Zariski closure of the image of the projection.
A minimal basis (but not a Gr\"obner basis) for the elimination ideal is given by the polynomials $P_1$ and $P_2$ as in~\eqref{eq:P1} and~\eqref{eq:P2} which can be seen from the Magma code in Listing \ref{lst:magma2}.
There may be points in the closure that do not lie in the image of the projection. These spurious points can be identified as follows.  
We take the projective closure with respect to the variables $(k,r)$. This corresponds to homogenizing the defining polynomials by introducing a new variable $z \in \mathbb{C}$, so that each polynomial becomes homogeneous in the variables $(k,r,z)$ \cite[Ch.~8, \S4, Prop.~2]{cox2025a}. These homogenized polynomials generate the ideal $I^h$, whose vanishing set defines the projective closure (with respect to the variables $(k,r)$). More precisely, the projective closure of an affine variety $W \subseteq \mathbb{A}^n$ is given by~\cite[Ch.~8, \S4, Def.~6]{cox2025a}
\begin{equation}
\overline{W} = \V\big(\I(W)^h\big) \subseteq \mathbb{P}^n
.
\end{equation}
Moreover, a Gröbner basis of $\I(W)$ with respect to a graded monomial order homogenizes to a Gröbner basis of $\I(W)^h$ \cite[Ch.~8, \S4, Thm.~4]{cox2025a}, which allows effective computation of $\overline{W}$. By \cite[Ch.~8, \S4, Prop.~7(i)]{cox2025a}, one has
\begin{equation}
\overline{W} \cap \{ z \neq 0 \} = W,
\end{equation}
hence the points with \( z = 0 \) are precisely the points of the projective closure not lying in the affine variety, i.e.\ the points at infinity.
The projection to the $c$-coordinates need not be closed, but by the Closure Theorem \cite[Ch.~3, \S2, Thm.~3]{cox2025a}, its Zariski closure is given by an elimination ideal. Thus, the additional (spurious) points arise as images of points at infinity.

The points at infinity are found by letting $z=0$, and this leads to the set of equations
\begin{equation}
\label{eq:homogenized-conditions-at-infty}
\begin{split}
    0 &= c_{11}(c_{13}c_{23} - c_{12}c_{33})r^2, \\
    0 &= (c_{13}^2 - c_{11}c_{33})r^2, \\
    0 &= (c_{13}c_{22} - c_{12}c_{23})r^2, \\
    0 &= (c_{23}^2 - c_{22}c_{33})r^2, \\
    0&=k_{00}=k_{20}=k_{11}=k_{02}=k_{10}=k_{01}, \\
    0 &= c_{13}^3 + c_{11}c_{13}c_{22} - c_{13}c_{22}^2 - c_{11}c_{12}c_{23} + c_{13}^2c_{23} + c_{12}c_{22}c_{23} - c_{13}c_{23}^2 - c_{23}^3 - c_{11}c_{13}c_{33}\\
    &\hspace{6mm}+ c_{13}c_{22}c_{33} - c_{11}c_{23}c_{33} + c_{22}c_{23}c_{33}, \\
    0 &= c_{12}^2 - c_{13}^2 - c_{11}c_{22} - 2c_{13}c_{23} - c_{23}^2 + c_{11}c_{33} + 2c_{12}c_{33} + c_{22}c_{33}, \\
    0 &= c_{12}c_{13} + c_{13}c_{22} - c_{11}c_{23} - c_{12}c_{23}
    .
\end{split}
\end{equation}
With $z=0$ and $k=0$, we must have $r\neq0$ in order to have a valid point in the projective space.
This can only happen when the following conditions hold:
\begin{equation}
\label{eq:spurious-points}
\begin{split}
    0 &= c_{11}(c_{13}c_{23} - c_{12}c_{33}), \\
    0 &= c_{13}^2 - c_{11}c_{33}, \\
    0 &= c_{13}c_{22} - c_{12}c_{23}, \\
    0 &= c_{23}^2 - c_{22}c_{33}
    .
\end{split}
\end{equation}
It is possible that the same stiffness tensor $c$ is in the image of both an affine point and a point at infinity under the projection.
Therefore the set of spurious stiffness tensors is contained in (but not necessarily equal to) the intersection of the variety defined by~\eqref{eq:spurious-points} and that defined by $P_1$ and $P_2$.

The intersection of two vanishing varieties of ideals is the vanishing variety of the radical of the sum of the two ideals.
In our case a minimal basis for this radical is given by (see the corresponding magma code in Listing~\eqref{lst:magma2})
\begin{equation}
\label{eq:spurious2}
\begin{split}
&
    c_{22}c_{33} - c_{23}^2, \\&
    c_{12}c_{23} - c_{13}c_{22}, \\&
    c_{11}c_{33} - c_{13}^2, \\&
    c_{11}c_{23} - c_{12}c_{13}, \\&
    c_{11}c_{22} - c_{12}^2 - 2c_{12}c_{33} + 2c_{13}c_{23}.
\end{split}
\end{equation}
These are conditions for a complex stiffness tensor.
The set of spurious real stiffness tensors is contained in the complex one.
Now that the algebraic geometric arguments are over, we take again the stiffness tensor to be real.

Positive definiteness means that the matrix $c$ of~\eqref{eq:Voigt-matrix} is positive definite, but~\eqref{eq:spurious2} makes many of the $2\times2$ minors vanish.
Therefore none of the spurious stiffness tensors are in positive definite locus.
\end{proof}

\begin{proof}[Detailed proof of Proposition~\ref{prop:two-circles}]
    We use the Magma computation in Listing~\ref{lst:isoslow}. The slowness polynomial of $c$ equals a product of two circle polynomials $(r_1x^2+r_1y^2-1)(r_2x^2+r_2y^2-1)$ for some $r_1,r_2$ if and only if all coefficients of their difference vanish as polynomials in $x,y$. Setting these coefficients to zero defines the ideal $I$ in the variables $c_{11},c_{12},c_{13},c_{22},c_{23},c_{33},r_1,r_2$. Eliminating $r_1$ and $r_2$ from $I$ via \texttt{EliminationIdeal} yields necessary and sufficient conditions on $c$ alone; by elimination theory, any $c$ satisfying these conditions lifts to a solution with some $r_1,r_2$. The resulting conditions are
\begin{equation}
    c_{13}+c_{23}=0,\quad c_{11}-c_{22}=0,\quad c_{12}c_{23}+c_{22}c_{23}=0,\quad c_{12}^2-c_{22}^2+2c_{12}c_{33}+2c_{22}c_{33}=0.
\end{equation}
The last two equations factor as
\begin{equation}
    c_{23}(c_{12}+c_{22})=0 \quad\text{and}\quad (c_{12}+c_{22})(c_{12}-c_{22}+2c_{33})=0,
\end{equation}
so the variety splits into two irreducible components according to whether $c_{12}+c_{22}=0$ or not, confirmed by \texttt{IrreducibleComponents}:
\textit{Component~1} ($c_{12}+c_{22}=0$): together with $c_{11}=c_{22}$ and $c_{13}=-c_{23}$, with $c_{11}, c_{13}$ and $c_{33}$ free, this gives exactly the tensors of the form~\eqref{eq:isoslow2}.
\textit{Component~2} ($c_{12}+c_{22}\neq 0$): the third equation then forces $c_{23}=0$, hence $c_{13}=0$ from the first equation, and the fourth gives $c_{12}=c_{22}-2c_{33}$, together with $c_{11}=c_{22}$. This gives exactly the tensors of the form~\eqref{eq:isoslow1}.
Since these two components are the complete irreducible decomposition of the elimination ideal, the slowness surface of $c$ is the union of two concentric circles if and only if $c$ is either of the form~\eqref{eq:isoslow1} or~\eqref{eq:isoslow2}.

Using that $c_{22}=c_{11}$ and $c_{23}=-c_{13}$, the Gröbner basis in all variables contains the following relations for the radii $r_1$ and $r_2$ of the two circles in the slowness surface:
\begin{equation}\label{eq:radii}
    r_1 + r_2 = c_{11}+c_{33}, \qquad r_2^2 - (c_{11}+c_{33})r_2 + (c_{11}c_{33}-c_{13}^2) = 0.
\end{equation}
Hence $r_2$ is the root of the second order polynomial above and from Vieta's formula that connects the polynomial coefficients to the sums and products of its roots it follows that $r_1$ is also a root of the same second order polynomial. Additionally, it follows that the product of $r_1$ and $r_2$ satisfies $r_1 r_2 = c_{11}c_{33}-c_{13}^2$. 

By this radius characterization, the slowness surface of a tensor depends on $c$ only through the quantities $c_{11}+c_{33}$ and $c_{11}c_{33}-c_{13}^2$, which determine the radii $r_1,r_2$ via~\eqref{eq:radii}. Hence a tensor of the form~\eqref{eq:isoslow1} and one of the form~\eqref{eq:isoslow2} give rise to the same slowness surface if and only if their values of $c_{11}+c_{33}$ and $c_{11}c_{33}-c_{13}^2$ agree. In particular, this condition is generally \emph{not} equivalent to $c_{11}=c_{33}$ and $c_{13}=0$: for instance, rotating the tensor~\eqref{eq:circlesemi} by an angle $\theta\in [0,2\pi)$ produces a one-parameter family of tensors of the form~\eqref{eq:isoslow2} (with $c_{11},c_{33},c_{13}$ varying nontrivially in $\theta$) that all share the same slowness surface as~\eqref{eq:circlesemi} itself, since $c_{11}+c_{33}$ and $c_{11}c_{33}-c_{13}^2$ are invariant under rotation.
\end{proof}

\begin{proof}[Detailed proof of Lemma \ref{lem:rotation-orbit}]
Assign to a stiffness tensor of the form~\eqref{eq:isoslow1} or~\eqref{eq:isoslow2} the
symmetric \emph{block}
\begin{equation}\label{eq:block}
    B \coloneqq \begin{pmatrix} c_{11} & c_{13}\\ c_{13} & c_{33}\end{pmatrix},
\end{equation}
formed from its $(1,1)$, $(1,3)$ and $(3,3)$ Voigt entries, the off-diagonal entry being the
$(1,3)$-entry $+c_{13}$ and not the $(2,3)$-entry $-c_{13}$. Both~$c$ and~$\tilde c$ have
block $B_0=\operatorname{diag}(c_{11},c_{33})$; write~$B'$ for the block of~$c'$.

By Proposition~\ref{prop:two-circles} two such tensors share a slowness surface if and only
if their blocks have equal trace and determinant, that is, if and only if
\begin{equation}
    \tr(B_0)=c_{11}+c_{33}=c_{11}'+c_{33}'=\tr(B')
\end{equation}
and
\begin{equation}
    \det(B_0)=c_{11}c_{33}=c_{11}'c_{33}'-(c_{13}')^2=\det(B').
\end{equation}
Since~$c$ and~$\tilde c$ have the same block, the first assertion follows.

We now consider $\rotC{\theta}\overline{c}$ for a general stiffness tensor $\overline{c}$ of
the form~\eqref{eq:isoslow2} with parameters $\overline{c}_{11}$, $\overline{c}_{13}$ and
$\overline{c}_{33}$. Specialising equation~\eqref{eq:rotCexplicit} in
Appendix~\ref{ap:rotrefl} to these parameters gives
\begin{align}
    \rotC{\theta}\overline{c} &=
    \begin{bmatrix}
        (\rotC{\theta}\overline{c})_{11} & -(\rotC{\theta}\overline{c})_{11} & (\rotC{\theta}\overline{c})_{13}\\
        -(\rotC{\theta}\overline{c})_{11}& (\rotC{\theta}\overline{c})_{11} & -(\rotC{\theta}\overline{c})_{13}\\
        (\rotC{\theta}\overline{c})_{13} & -(\rotC{\theta}\overline{c})_{13} & (\rotC{\theta}\overline{c})_{33}
    \end{bmatrix},\\
\intertext{with}
    (\rotC{\theta}\overline{c})_{11} &=
        \tfrac{1}{2}(\overline{c}_{11}+\overline{c}_{33})
        +\tfrac{1}{2}(\overline{c}_{11}-\overline{c}_{33})\cos(4\theta)
        -\overline{c}_{13}\sin(4\theta),\\
    (\rotC{\theta}\overline{c})_{33} &=
        \tfrac{1}{2}(\overline{c}_{11}+\overline{c}_{33})
        -\tfrac{1}{2}(\overline{c}_{11}-\overline{c}_{33})\cos(4\theta)
        +\overline{c}_{13}\sin(4\theta),\\
    (\rotC{\theta}\overline{c})_{13} &=
        \tfrac{1}{2}(\overline{c}_{11}-\overline{c}_{33})\sin(4\theta)
        +\overline{c}_{13}\cos(4\theta).
\end{align}
In particular $\rotC{\theta}\overline{c}$ is again of the form~\eqref{eq:isoslow2}, and
writing $\overline{B}$ for the block of $\overline{c}$ and $\overline{B}^{\theta}$ for the
block of $\rotC{\theta}\overline{c}$, the three identities above say precisely that
\begin{equation}\label{eq:Bconj2}
    \overline{B}^{\theta} = \rotA{2\theta}\,\overline{B}\,\rotA{2\theta}^{T};
\end{equation}
a rotation of the tensor by $\theta$ induces a rotation of its block by $2\theta$.

Hence, if $c'=\rotC{\theta}\tilde c$, then $B'=\rotA{2\theta}B_0\rotA{2\theta}^{T}$ by~\eqref{eq:Bconj2}, and since $\tr(QB_0Q^{T})=\tr(B_0)$ and
$\det(QB_0Q^{T})=\det(Q)^{2}\det(B_0)=\det(B_0)$ for every $Q\in O(2)$, it follows from
Proposition~\ref{prop:two-circles} that $c'$ and $c$ share the same slowness surface.

If we conversely assume that $c'$ is of the form~\eqref{eq:isoslow2} and shares a slowness
surface with $c$, then Proposition~\ref{prop:two-circles} gives
\begin{equation}
    \tr(B_0)=\tr(B') \quad \text{and} \quad \det(B_0)=\det(B').
\end{equation}
This implies that $B_0$ and $B'$ have the same characteristic polynomial
\begin{equation}
    \lambda^2-\tr(B_0)\lambda + \det(B_0),
\end{equation}
and thus the same eigenvalues $\lambda_1=c_{11}$ and $\lambda_2=c_{33}$. Since both matrices
are real symmetric, the spectral theorem yields some $Q\in O(2)$ with
\begin{equation}\label{eq:Bprime}
    B'=Q B_0 Q^T,
\end{equation}
after reordering the eigenvalues by a permutation matrix in $O(2)$ if necessary. Such a $Q$
is either a rotation, with $\det(Q)=1$, or a reflection, with $\det(Q)=-1$. In the latter
case write $Q=\reflA{\psi}=\rotA{2\psi}D$ with $D=\operatorname{diag}(1,-1)$; as $B_0$ is
diagonal, $DB_0D^{T}=B_0$ and therefore $QB_0Q^{T}=\rotA{2\psi}B_0\rotA{2\psi}^{T}$. Every
$O(2)$-conjugate of $B_0$ is thus already a rotation conjugate, so
$B'=\rotA{\phi}B_0\rotA{\phi}^{T}$ for some $\phi$. With $\theta\coloneqq\phi/2$, \eqref{eq:Bconj2} shows that $\rotC{\theta}\tilde c$ is a tensor of the form
\eqref{eq:isoslow2} with block $B'$. Such a tensor is determined by its block, so
$c'=\rotC{\theta}\tilde c$.
\end{proof}

\section{Magma code to support proofs in Section \ref{sec:AlgAdStiffness}}

\begin{lstlisting}[language=Magma, caption={Magma code for the generators for the Gröbner basis}, label=lst:magma1]
// stiffness tensor c
// coefficients k
// radius r

Q<c11,c12,c13,c22,c23,c33,r,k20,k11,k02,k10,k01,k00> :=
    PolynomialRing(Rationals(), [1,1,1,1,1,1,100,100,100,100,100,100,100]);

R<x,y> := PolynomialRing(Q, 2);

slownesspoly := (c11*c33 - c13^2)*x^4 +
                (2*c11*c23 - 2*c12*c13)*x^3*y +
                (c11*c22 - c12^2 - 2*c12*c33 + 2*c13*c23)*x^2*y^2 +
                (-c11 - c33)*x^2 +
                (-2*c12*c23 + 2*c13*c22)*x*y^3 +
                (-2*c13 - 2*c23)*x*y +
                (c22*c33 - c23^2)*y^4 +
                (-c22 - c33)*y^2 + 1;

circlepoly := x^2 + y^2 - r^2;
otherpoly := k20*x^2 + k11*x*y + k02*y^2 + k10*x + k01*y + k00;

coeffs := Coefficients(slownesspoly - circlepoly*otherpoly);
I := ideal<Q | coeffs>;

print "Groebner basis in all variables:";
GroebnerBasis(I);

print "Conditions on the stiffness tensor only:";
MinimalBasis(EliminationIdeal(I, {c11,c12,c13,c22,c23,c33}));

// BASIS OUTPUT:
[
    r^2*k00 + 1,
    c11*c13*c23*r^2 - c11*c12*c33*r^2 + c12*k00 + c11*c12 - c13^2 - c13*c23 +
        c12*c33,
    c13^2*r^2 - c11*c33*r^2 + k00 + c11 + c33,
    c13*c22*r^2 - c12*c23*r^2 - c13 - c23,
    c23^2*r^2 - c22*c33*r^2 + k00 + c22 + c33,
    k00^2 + c22*k00 + c33*k00 - c23^2 + c22*c33,
    c12*c23*k00 + c22*c23*k00 - c13*c23^2 - c23^3 + c12*c23*c33 + c22*c23*c33,
    c11*k00 - c22*k00 - c13^2 + c23^2 + c11*c33 - c22*c33,
    c13*k00 + c23*k00 + c13*c22 - c12*c23,
    k20 + c13^2 - c11*c33,
    k11 - 2*c13*c22 + 2*c12*c23,
    k02 + c23^2 - c22*c33,
    k10,
    k01,
    c13^3 + c11*c13*c22 - c13*c22^2 - c11*c12*c23 + c13^2*c23 + c12*c22*c23 -
        c13*c23^2 - c23^3 - c11*c13*c33 + c13*c22*c33 - c11*c23*c33 +
        c22*c23*c33,
    c12^2 - c13^2 - c11*c22 - 2*c13*c23 - c23^2 + c11*c33 + 2*c12*c33 + c22*c33,
    c12*c13 + c13*c22 - c11*c23 - c12*c23
]
\end{lstlisting}

The vanishing set:
\begin{lstlisting}[language=Magma, caption={Magma code to obtain conditions on the stiffness tensor c.}, label=lst:magma2]
        Q<c11,c12,c13,c22,c23,c33> := PolynomialRing(Rationals(),6);

// conditions for spurious points
generatorsSpurious :=
[
    c11*(c13*c23 - c12*c33),
    c13^2 - c11*c33,
    c13*c22 - c12*c23,
    c23^2 - c22*c33
];

// generators of elimination ideal
generatorsElimination :=
[
    c13^3 + c11*c13*c22 - c13*c22^2 - c11*c12*c23 + c13^2*c23 + c12*c22*c23 -
        c13*c23^2 - c23^3 - c11*c13*c33 + c13*c22*c33 - c11*c23*c33 +
        c22*c23*c33,
    c12^2 - c13^2 - c11*c22 - 2*c13*c23 - c23^2 + c11*c33 + 2*c12*c33 + c22*c33,
    c12*c13 + c13*c22 - c11*c23 - c12*c23
];

// determinant of c as a matrix (not used)
det := c11*c22*c33-c12^2*c33-c11*c23^2+2*c12*c13*c23-c13^2*c22;

// ideal corresponding to spurious points
Is := Radical(ideal<Q | generatorsSpurious>);
bIs := MinimalBasis(Is);

// ideal corresponding to elimination variety
Ie := Radical(ideal<Q | generatorsElimination>);
bIe := MinimalBasis(Ie);

// intersection of varieties is given by sum of ideals:
Ii := Radical(Is + Ie);
bIi := MinimalBasis(Ii);

bIs;
bIe;
bIi;

/// OUTPUT:

[
    c22*c33 - c23^2,
    c12*c23 - c13*c22,
    c11*c33 - c13^2,
    c11*c23 - c12*c13
]
[
    -c11*c23 + c12*c13 - c12*c23 + c13*c22,
    -c11*c22 + c11*c33 + c12^2 + 2*c12*c33 - c13^2 - 2*c13*c23 + c22*c33 - c23^2
]
[
    c22*c33 - c23^2,
    c12*c23 - c13*c22,
    c11*c33 - c13^2,
    c11*c23 - c12*c13,
    c11*c22 - c12^2 - 2*c12*c33 + 2*c13*c23
]
\end{lstlisting}

\begin{lstlisting}[language=Magma, caption={Magma code to characterize all stiffness tensors whose slowness surface is the union of two circles in Proposition \ref{prop:two-circles}.}, label=lst:isoslow]
Q<c11,c12,c13,c22,c23,c33,r1,r2> :=
    PolynomialRing(Rationals(), [1,1,1,1,1,1,100,100]);

R<x,y> := PolynomialRing(Q, 2);

slownesspoly := (c11*c33 - c13^2)*x^4 +
                (2*c11*c23 - 2*c12*c13)*x^3*y +
                (c11*c22 - c12^2 - 2*c12*c33 + 2*c13*c23)*x^2*y^2 +
                (-c11 - c33)*x^2 +
                (-2*c12*c23 + 2*c13*c22)*x*y^3 +
                (-2*c13 - 2*c23)*x*y +
                (c22*c33 - c23^2)*y^4 +
                (-c22 - c33)*y^2 + 1;

circlepoly1 := r1*x^2 + r1*y^2 - 1;
circlepoly2 := r2*x^2 + r2*y^2 - 1;

coeffs := Coefficients(slownesspoly - circlepoly1*circlepoly2);
I := ideal<Q | coeffs>;

print "Groebner basis in all variables:";
GroebnerBasis(I);

print "Conditions on the stiffness tensor only:";
mb := MinimalBasis(EliminationIdeal(I, {c11,c12,c13,c22,c23,c33}));
mb;

P := Proj(Q);
X := Scheme(P, mb);
IrreducibleComponents(X);

/// OUTPUT:
Groebner basis in all variables:
[
    r2^2 - c22*r2 - c33*r2 - c23^2 + c22*c33,
    r1 + r2 - c22 - c33,
    c12^2 - c22^2 + 2*c12*c33 + 2*c22*c33,
    c12*c23 + c22*c23,
    c11 - c22,
    c13 + c23
]
Conditions on the stiffness tensor only:
[
    c13 + c23,
    c11 - c22,
    c12*c23 + c22*c23,
    c12^2 - c22^2 + 2*c12*c33 + 2*c22*c33
]
[
    Scheme over Rational Field defined by
    c11 - c22,
    c12 + c22,
    c13 + c23,
    Scheme over Rational Field defined by
    c11 - c22,
    c12 - c22 + 2*c33,
    c13,
    c23
]
\end{lstlisting}

The singular locus and checking whether isotropic stiffness tensors are contained in the vanishing set of $P_1$ and $P_2$ defined in~\eqref{eq:P1} and~\eqref{eq:P2}:
\begin{lstlisting}[language=Magma, caption={Magma code to obtain the singular locus of the vanishing set of $P_1$ and $P_2$ and checking whether isotropic stiffness tensors are included in the vanishing set.}, label=lst:magma3]

Q<c11,c12,c13,c22,c23,c33> := PolynomialRing(Rationals(),6);

P := [
c12*c13 + c13*c22 - c11*c23 - c12*c23,
c12^2 - c13^2 - c11*c22 - 2*c13*c23 - c23^2 + c11*c33 + 2*c12*c33 + c22*c33
];

//Singular points are where rank of DP drops, so they are given by:
print("Singular points of the variety P1=P2=0 are exactly where these vanish:");
MinimalBasis(Radical(ideal<Q|Minors(JacobianMatrix(P),2) cat P>));

isotropic := [c13,c23,c22-c11,2*c33-c11+c12];

print("Isotropic stiffness tensors are contained in the variety P1=P2=0:");
ideal<Q|P> subset ideal<Q|isotropic>;

/// OUTPUT:

Singular points of the variety P1=P2=0 are exactly where these vanish:
[
    c13 + c23,
    c12 + c22,
    c11 - c22
]
Isotropic stiffness tensors are contained in the variety P1=P2=0:
true
\end{lstlisting}

\section{Magma code to support the proof of Lemma \ref{lem:smoothMap}}
\begin{lstlisting}[language=Magma, caption={Magma code to finding the graph condition}, label=lst:graph]
Q<s,p11,p12,p22,c11,c12,c13,c22,c23,c33> := PolynomialRing(Rationals(),10);
P<x,y> := PolynomialRing(Q,2);

slownesspoly := (c11*c33 - c13^2)*x^4 + (2*c11*c23 - 2*c12*c13)*x^3*y + (c11*c22 - c12^2 - 2*c12*c33 + 2*c13*c23)*x^2*y^2 + (-c11 - c33)*x^2 + (-2*c12*c23 + 2*c13*c22)*x*y^3 + (-2*c13 - 2*c23)*x*y + (c22*c33 - c23^2)*y^4 + (-c22 - c33)*y^2 + 1;

circlepoly := s*x^2 + s*y^2 - 1;
otherpoly := p11*x^2 + 2*p12*x*y + p22*y^2 - 1;

conditions := Coefficients(slownesspoly-circlepoly*otherpoly);

FlippedIsotropic := [
    s - c22,
    p11 - c33,
    p12,
    p22 - c33,
    c11 - c22,
    c12 - c22 + 2*c33,
    c13,
    c23
];

MinimalBasis(Saturation(ideal<Q|conditions>,ideal<Q|FlippedIsotropic>));

/// OUTPUT:
[
    p12 - c13 - c23,
    p11 - p22 - c11 + c22,
    s + p22 - c22 - c33,
    c11*c23 - c12*c13 + c12*c23 - c13*c22,
    c11*c22 - c11*c33 - c12^2 - 2*c12*c33 + c13^2 + 2*c13*c23 - c22*c33 + c23^2,
    p22*c13 + p22*c23 - c12*c23 - c13*c33 - c22*c23 - c23*c33,
    p22*c12 + p22*c22 - c12*c22 - c13*c23 - c22^2 - c23^2,
    p22*c11 - p22*c22 - c11*c22 - c13^2 + c22^2 + c23^2,
    p22^2 - p22*c22 - p22*c33 + c22*c33 - c23^2
]
\end{lstlisting}

\begin{lstlisting}[language=Magma, caption={Magma code to find the rank of $Df$}, label=lst:rank]
    Q<s,p11,p12,p22,c11,c12,c13,c22,c23,c33> := PolynomialRing(Rationals(),10);

f := [
    p12 - c13 - c23,
    p11 - p22 - c11 + c22,
    s + p22 - c22 - c33,
    c11*c23 - c12*c13 + c12*c23 - c13*c22,
    c11*c22 - c11*c33 - c12^2 - 2*c12*c33 + c13^2 + 2*c13*c23 - c22*c33 + c23^2,
    p22*c13 + p22*c23 - c12*c23 - c13*c33 - c22*c23 - c23*c33,
    p22*c12 + p22*c22 - c12*c22 - c13*c23 - c22^2 - c23^2,
    p22*c11 - p22*c22 - c11*c22 - c13^2 + c22^2 + c23^2,
    p22^2 - p22*c22 - p22*c33 + c22*c33 - c23^2
];

print "f vanishes implies that rank(Df)<7: ", ideal<Q|Minors(JacobianMatrix(f),7)> subset ideal<Q|f>;
print "Conditions for rank(Df)<6 on V(f): ", MinimalBasis(Radical(ideal<Q|Minors(JacobianMatrix(f),6) cat f>));

/// OUTPUT:
f vanishes implies that rank(Df)<7:  true
Conditions for rank(Df)<6 on V(f):  [
    c23,
    c22 - c33,
    c13,
    c12 + c33,
    c11 - c33,
    p22 - c33,
    p12,
    p11 - c33,
    s - c33
]
\end{lstlisting}

\begin{lstlisting}[language=Magma, caption={Magma code to find the ranks of $\partial_\pairsofmetrics f$ and $B(p)$ to confirm the statements in~\eqref{eq:ift-pi} and~\eqref{eq:ift-rho}}, label=lst:rankpirho]
Q<s,p11,p12,p22,c11,c12,c13,c22,c23,c33> := PolynomialRing(Rationals(),10);

f := [
    p12 - c13 - c23,
    p11 - p22 - c11 + c22,
    s + p22 - c22 - c33,
    c11*c23 - c12*c13 + c12*c23 - c13*c22,
    c11*c22 - c11*c33 - c12^2 - 2*c12*c33 + c13^2 + 2*c13*c23 - c22*c33 + c23^2,
    p22*c13 + p22*c23 - c12*c23 - c13*c33 - c22*c23 - c23*c33,
    p22*c12 + p22*c22 - c12*c22 - c13*c23 - c22^2 - c23^2,
    p22*c11 - p22*c22 - c11*c22 - c13^2 + c22^2 + c23^2,
    p22^2 - p22*c22 - p22*c33 + c22*c33 - c23^2
];

P := [
    -c11*c23 + c12*c13 - c12*c23 + c13*c22,
    -c11*c22 + c11*c33 + c12^2 + 2*c12*c33 - c13^2 - 2*c13*c23 + c22*c33 - c23^2
];

c := SymmetricMatrix(Q,[c11,c12,c22,c13,c23,c33]);

detE := (s-p11)*(s-p22)-p12^2; //det(sI-p)
detF := Determinant(c); //det(c)

Df := JacobianMatrix(f);
DP := ColumnSubmatrix(JacobianMatrix(P),5,6);

DEf := ColumnSubmatrix(Df,1,4);
DFf := ColumnSubmatrix(Df,5,6);

B := VerticalJoin(DP,DFf);

failure1 := Minors(DEf,4);
failure2 := Minors(B,6);

MinimalBasis(Radical(ideal<Q|failure1 cat f>));
MinimalBasis(Radical(ideal<Q|failure2 cat f>));

//saturation study:
failure_pi := MinimalBasis(Radical(ideal<Q | Minors(DEf,4) cat f>));

I1_pi := ideal<Q|failure_pi>;
I2_pi := Saturation(I1_pi, ideal<Q|detE>);
I1_pi; I2_pi;

failure := MinimalBasis(Radical(ideal<Q|Minors(B,6) cat f>));

I1 := ideal<Q|failure>;
I2 := Saturation(I1,ideal<Q|detE>);
I3 := Saturation(I2,ideal<Q|detF>);

I1;I2;I3;

/// OUTPUT:
[
    c13 + c23,
    c12 + c22,
    c11 - c22,
    p22 - 1/2*c22 - 1/2*c33,
    p12,
    p11 - 1/2*c22 - 1/2*c33,
    s - 1/2*c22 - 1/2*c33,
    c22^2 - 2*c22*c33 + 4*c23^2 + c33^2
]
[
    p12 - c13 - c23,
    p11 - p22 - c11 + c22,
    s + p22 - c22 - c33,
    c12^2 + c12*c22 + c12*c33 - 2*c13*c23 + c22*c33 - 2*c23^2,
    c11*c23 + c13*c22 - c13*c33 - c23*c33,
    c11*c23 - 1/2*c12*c13 + 1/2*c12*c23 - 1/2*c13*c33 - 1/2*c23*c33,
    c11*c22 - c11*c33 + c12*c22 - c12*c33 + c13^2 - c23^2,
    c11*c12 + 2*c11*c22 - c11*c33 + c12*c22 - 2*c12*c33 - c22*c33,
    p22*c13 + p22*c23 - c12*c23 - c13*c33 - c22*c23 - c23*c33,
    p22*c12 + p22*c22 - c12*c22 - c13*c23 - c22^2 - c23^2,
    p22*c11 - p22*c22 - c11*c33 + c12*c22 - c12*c33 + c22^2,
    p22^2 - p22*c22 - p22*c33 + c22*c33 - c23^2
]
Ideal of Polynomial ring of rank 10 over Rational Field
Order: Lexicographical
Variables: s, p11, p12, p22, c11, c12, c13, c22, c23, c33
Homogeneous, Dimension >0
Basis:
[
    c13 + c23,
    c12 + c22,
    c11 - c22,
    p22 - 1/2*c22 - 1/2*c33,
    p12,
    p11 - 1/2*c22 - 1/2*c33,
    s - 1/2*c22 - 1/2*c33,
    c22^2 - 2*c22*c33 + 4*c23^2 + c33^2
]
Ideal of Polynomial ring of rank 10 over Rational Field
Order: Lexicographical
Variables: s, p11, p12, p22, c11, c12, c13, c22, c23, c33
Homogeneous
Basis:
[
    1
]
Ideal of Polynomial ring of rank 10 over Rational Field
Order: Lexicographical
Variables: s, p11, p12, p22, c11, c12, c13, c22, c23, c33
Homogeneous, Dimension >0
Basis:
[
    p12 - c13 - c23,
    p11 - p22 - c11 + c22,
    s + p22 - c22 - c33,
    c12^2 + c12*c22 + c12*c33 - 2*c13*c23 + c22*c33 - 2*c23^2,
    c11*c23 + c13*c22 - c13*c33 - c23*c33,
    c11*c23 - 1/2*c12*c13 + 1/2*c12*c23 - 1/2*c13*c33 - 1/2*c23*c33,
    c11*c22 - c11*c33 + c12*c22 - c12*c33 + c13^2 - c23^2,
    c11*c12 + 2*c11*c22 - c11*c33 + c12*c22 - 2*c12*c33 - c22*c33,
    p22*c13 + p22*c23 - c12*c23 - c13*c33 - c22*c23 - c23*c33,
    p22*c12 + p22*c22 - c12*c22 - c13*c23 - c22^2 - c23^2,
    p22*c11 - p22*c22 - c11*c33 + c12*c22 - c12*c33 + c22^2,
    p22^2 - p22*c22 - p22*c33 + c22*c33 - c23^2
]
Ideal of Polynomial ring of rank 10 over Rational Field
Order: Lexicographical
Variables: s, p11, p12, p22, c11, c12, c13, c22, c23, c33
Homogeneous, Dimension >0
Basis:
[
    p22^2 - p22*c22 - p22*c33 + c22*c33 - c23^2,
    s + p22 - c22 - c33,
    p11 - p22,
    p12,
    c11 - c22,
    c12 + c22,
    c13 + c23
]
Ideal of Polynomial ring of rank 10 over Rational Field
Order: Lexicographical
Variables: s, p11, p12, p22, c11, c12, c13, c22, c23, c33
Homogeneous
Basis:
[
    1
]
\end{lstlisting}

\bibliographystyle{abbrv}
\bibliography{biblio}

@PREAMBLE{
"\def\cprime{$'$} "
}

@InCollection{uhlmann2004cauchy,
  Title                    = {The Cauchy data and the scattering relation},
  Author                   = {Uhlmann, Gunther},
  Booktitle                = {Geometric methods in inverse problems and PDE control},
  Publisher                = {Springer},
  Year                     = {2004},
  Pages                    = {263--287}
}

@misc{gathmann2021alggeom,
  author       = {Andreas Gathmann},
  title        = {Algebraic Geometry: Lecture Notes},
  year         = {2021},
  howpublished = {\url{https://agag-gathmann.math.rptu.de/class/alggeom-2021/alggeom-2021.pdf}},
  note         = {accessed 2026-06-07}
}

@misc{gathmann2002alggeomchapter4,
  author       = {Andreas Gathmann},
  title        = {Algebraic Geometry: Chapter 4},
  year         = {2002},
  howpublished = {\url{https://agag-gathmann.math.rptu.de/class/alggeom-2002/alggeom-2002-c4.pdf}},
  note         = {accessed 2026-06-07}
}

@book{cox2025a,
  author    = {David A. Cox and John Little and Donal O'Shea},
  title     = {Ideals, Varieties, and Algorithms: An Introduction to Computational Algebraic Geometry and Commutative Algebra},
  series    = {Undergraduate Texts in Mathematics},
  edition   = {5},
  publisher = {Springer},
  address   = {Cham},
  year      = {2025},
  doi       = {10.1007/978-3-031-91841-4},
  isbn      = {978-3-031-91840-7}
}

@article{belishev1987approach,
  Title                    = {An approach to multidimensional inverse problems for the wave equation},
  Author                   = {Belishev, Michael I},
  journal                = {Doklady Akademii Nauk SSSR},
  Year                     = {1987},
  Number                   = {3},
  Pages                    = {524--527},
  Volume                   = {297}
}

@Article{belishev1992reconstruction,
  Title                    = {To the reconstruction of a {R}iemannian manifold via its spectral data ({B}c--{M}ethod)},
  Author                   = {Belishev, Michael I and Kurylev, Yaroslav V},
  Journal                  = {Communications in partial differential equations},
  Year                     = {1992},
  Number                   = {5-6},
  Pages                    = {767--804},
  Volume                   = {17},
  Publisher                = {Taylor \& Francis}
}

@article{MazzucatoRachele2008,
  author  = {Mazzucato, Anna L. and Rachele, Lizabeth V.},
  title   = {On Transversely Isotropic Elastic Media with Ellipsoidal Slowness Surfaces},
  journal = {Mathematics and Mechanics of Solids},
  volume  = {13},
  number  = {7},
  pages   = {611--638},
  year    = {2008},
  doi     = {10.1177/1081286507078498}
}

@book{Gr-Sj,
  title={Microlocal analysis for differential operators: an introduction},
  author={Grigis, Alain and Sj{\"o}strand, Johannes},
  year={1994},
  publisher={Cambridge University Press},
  address={Cambridge},
  isbn={0-521-44975-8}
}

@article{MazzucatoRachele2007,
  author  = {Mazzucato, Anna L. and Rachele, Lizabeth V.},
  title   = {On Uniqueness in the Inverse Problem for Transversely Isotropic Elastic Media with a Disjoint Wave Mode},
  journal = {Wave Motion},
  volume  = {44},
  number  = {7--8},
  pages   = {605--625},
  year    = {2007},
  doi     = {10.1016/j.wavemoti.2007.03.004}
}

@misc{de2023reconstruction,
  title={Reconstruction of generic anisotropic stiffness tensors from partial data around one polarization},
  author={de Hoop, Maarten V and Ilmavirta, Joonas and Lassas, Matti and V{\'a}rilly-Alvarado, Anthony},
  journal={arXiv:2307.03312},
  year={2023}
}

@article{mazzucato2006partial,
  title={Partial uniqueness and obstruction to uniqueness in inverse problems for anisotropic elastic media},
  author={Mazzucato, Anna L and Rachele, Lizabeth V},
  journal={Journal of Elasticity},
  volume={83},
  number={3},
  pages={205--245},
  year={2006},
  publisher={Springer}
}

@article{ChadwickNorris1990,
  author  = {Chadwick, P. and Norris, A. N.},
  title   = {Conditions under Which the Slowness Surface of an Anisotropic Elastic Material Is the Union of Aligned Ellipsoids},
  journal = {The Quarterly Journal of Mechanics and Applied Mathematics},
  volume  = {43},
  number  = {4},
  pages   = {589--603},
  year    = {1990},
  doi     = {10.1093/qjmam/43.4.589}
}

@article{ilmavirta2026rigidity,
  title={Rigidity of homogeneous {L}am{é} systems},
  author={Ilmavirta, Joonas and Saksala, Teemu and Yan, Lili},
  journal={Journal of Inverse and Ill-Posed Problems, in press, arXiv:2602.08860},
  year={2026}
}

@article{zhai2025determination,
  title={Determination of the density in the linear elastic wave equation},
  author={Zhai, Jian},
  journal={Journal of Differential Equations},
  volume={459},
  pages={114064},
  year={2026},
  publisher={Elsevier}
}

@article{stefanov2017elastic,
  title={Local recovery of the compressional and shear speeds from the hyperbolic {DN} map},
  author={Stefanov, Plamen and Uhlmann, Gunther and Vasy, Andras},
  journal={Inverse Problems},
  volume={34},
  number={1},
  pages={014003},
  year={2017},
  publisher={IOP Publishing}
}

@Article{pestov2005two,
  Title                    = {Two dimensional compact simple {R}iemannian manifolds are boundary distance rigid},
  Author                   = {Pestov, Leonid and Uhlmann, Gunther},
  Journal                  = {Annals of {M}athematics},
  Year                     = {2005},
  Pages                    = {1093--1110},

  Publisher                = {JSTOR}
}

@Article{croke1991rigidity,
  Title                    = {Rigidity and the distance between boundary points},
  Author                   = {Croke, Christopher B },
  Journal                  = {Journal of Differential Geometry},
  Year                     = {1991},
  Number                   = {2},
  Pages                    = {445--464},
  Volume                   = {33},
  Publisher                = {Lehigh University}
}

@article{burago2010boundary,
  title={Boundary rigidity and filling volume minimality of metrics close to a flat one},
  author={Burago, Dmitri and Ivanov, Sergei},
  journal={Annals of mathematics},
  pages={1183--1211},
  year={2010},
  publisher={JSTOR}
}

@Article{michel1981rigidite,
  Title                    = {Sur la rigidit{\'e} impos{\'e}e par la longueur des g{\'e}od{\'e}siques},
  Author                   = {Michel, Ren{\'e}},
  Journal                  = {Inventiones mathematicae},
  Year                     = {1981},
  Number                   = {1},
  Pages                    = {71--83},
  Volume                   = {65},
  Publisher                = {Springer}
}

@article{de2019inverse,
  title={Determination of a compact {F}insler manifold from its boundary distance map and an inverse problem in elasticity},
  author={de Hoop, Maarten V and Ilmavirta, Joonas and Lassas, Matti and Saksala, Teemu},
  journal={Communications in Analysis and Geometry},
  year={2023},
   volume={31},
  number={7},
  pages={1693--1747}
}

@article{SU3,
author="Stefanov, Plamen and Uhlmann, Gunther",
title="Boundary ridigity and stability for generic simple metrics",
journal="Journal of the {A}merican {M}athematical {S}ociety",
volume=18,
year=2005,
pages="975-1003"
}

@book{Paternain2023,
  title = {Geometric Inverse Problems: With Emphasis on Two Dimensions},
  ISBN = {9781316510872},
  url = {http://dx.doi.org/10.1017/9781009039901},
  DOI = {10.1017/9781009039901},
  publisher = {Cambridge University Press},
  author = {Paternain,  Gabriel P. and Salo,  Mikko and Uhlmann,  Gunther},
  year = {2023},
  month = jan 
}

@misc{IKK25,
  title={Horizontal and Vertical Regularity of Elastic Wave Geometry},
  author={Ilmavirta, Joonas and Kirkkopelto, Pieti and Kykk{\"a}nen, Antti},
  note={arXiv:2511.16466},
  year={2025}
}

@article{bhuiyan2016anisotropic,
  title={Anisotropic parameters of dry and saturated sand under stress},
  author={Bhuiyan, Mohammad H and Holt, Rune M},
  journal={Geophysics},
  volume={81},
  number={5},
  pages={C229--C241},
  year={2016},
  publisher={Society of Exploration Geophysicists and American Association of Petroleum~…}
}

@article{dencker1982propagation,
  title={On the propagation of polarization sets for systems of real principal type},
  author={Dencker, Nils},
  journal={Journal of Functional Analysis},
  volume={46},
  number={3},
  pages={351--372},
  year={1982},
  publisher={Elsevier}
}

@article{de2019unique,
  title={Unique recovery of piecewise analytic density and stiffness tensor from the elastic-wave {D}irichlet-to-{N}eumann map},
  author={de Hoop, Maarten V and Nakamura, Gen and Zhai, Jian},
  journal={SIAM Journal on Applied Mathematics},
  volume={79},
  number={6},
  pages={2359--2384},
  year={2019},
  publisher={SIAM}
}

@article{rachele2000boundary,
  title={Boundary determination for an inverse problem in elastodynamics},
  author={Rachele, Lizabeth},
  journal={Communications in Partial Differential Equations},
  year={2000},
  volume    = {25},
  number    = {11-12},
  pages     = {1951--1996},
  doi       = {10.1080/03605300008821575},
  url       = {https://doi.org/10.1080/03605300008821575}
}

@article{rachele2000inverse,
  title={An inverse problem in elastodynamics: uniqueness of the wave speeds in the interior},
  author={Rachele, Lizabeth},
  journal={Journal of Differential Equations},
  volume={162},
  number={2},
  pages={300--325},
  year={2000},
  publisher={Elsevier}
}

@article{rachele2003uniqueness,
  title={Uniqueness of the density in an inverse problem for isotropic elastodynamics},
  author={Rachele, Lizabeth},
  journal={Transactions of the American Mathematical Society},
  volume={355},
  number={12},
  pages={4781--4806},
  year={2003}
}

@article{stefanov2017local,
  title={Local and global boundary rigidity and the geodesic {X}-ray transform in the normal gauge},
  author={Stefanov, Plamen and Uhlmann, Gunther and Vasy, Andr{\'a}s},
  journal={Annals of Mathematics},
  volume={194},
  number={1},
  pages={1--95},
  year={2021},
  publisher={JSTOR}
}

@article{bhattacharyya2018local,
  title={Local uniqueness of the density from partial boundary data for isotropic elastodynamics},
  author={Bhattacharyya, Sombuddha},
  journal={Inverse Problems},
  volume={34},
  number={12},
  pages={125001},
  year={2018},
  publisher={IOP Publishing}
}

@article{survey,
title = "Travel Time Tomography",
author = "Plamen Stefanov and Gunther Uhlmann and Andras Vasy and Hanming Zhou",
year = "2019",
month = jun,
day = "1",
doi = "10.1007/s10114-019-8338-0",
language = "English",
volume = "35",
pages = "1085--1114",
journal = "Acta Mathematica Sinica, English Series",
issn = "1439-8516",
publisher = "Springer Verlag",
number = "6",
}

@book{paternain2023book,
  title={Geometric inverse problems},
  author={Paternain, Gabriel P and Salo, Mikko and Uhlmann, Gunther},
  volume={204},
  year={2023},
  publisher={Cambridge University Press}
}

@article{belishev2002dynamical,
  title={The dynamical {L}am\'e system: regularity of solutions, boundary controllability and boundary data continuation},
  author={Belishev, MI and Lasiecka, I},
  journal={ESAIM: Control, Optimisation and Calculus of Variations},
  volume={8},
  pages={143--167},
  year={2002},
  publisher={EDP Sciences}
}

@article{ilmavirta2023gauge,
  title = {{Gauge freedoms in the anisotropic elastic Dirichlet-to-Neumann map}},
  volume = {19},
  ISSN = {1930-8345},
  url = {http://dx.doi.org/10.3934/ipi.2024055},
  DOI = {10.3934/ipi.2024055},
  number = {4},
  journal = {Inverse Problems and Imaging},
  publisher = {American Institute of Mathematical Sciences (AIMS)},
  author = {Ilmavirta,  Joonas and Schl\"{u}ter,  Hjørdis},
  year = {2025},
  pages = {816–828}
}

@misc{oksanen2024rigidity,
  title={Rigidity in the {L}orentzian {C}alder{\'o}n problem with formally determined data},
  author={Lauri Oksanen and {Rakesh} and Mikko Salo},
  note={arXiv:2409.18604},
  year={2024}
}

@article{ivanov2013local,
  title={Local monotonicity of {R}iemannian and {F}insler volume with respect to boundary distances},
  author={Ivanov, Sergei},
  journal={Geometriae Dedicata},
  volume={164},
  number={1},
  pages={83--96},
  year={2013},
  publisher={Springer}
}

@article{eller2002uniqueness,
  title={Uniqueness and stability in the {C}auchy problem for {M}axwell and elasticity systems},
  author={Eller, Matthias and Isakov, Victor and Nakamura, Gen and Tataru, Daniel},
  journal={Nonlinear partial differential equations and their applications},
  volume={31},
  pages={329--349},
  year={2002},
  publisher={Studies in Mathematics and Applications}
}

@article{uhlmann2016inverse,
  title={The inverse problem for the local geodesic ray transform},
  author={Uhlmann, Gunther and Vasy, Andr{\'a}s},
  journal={Inventiones mathematicae},
  volume={205},
  number={1},
  pages={83--120},
  year={2016},
  publisher={Springer}
}

@article{Alexakis_Feiz_Oksanen_23,
  title={Lorentzian {C}alder{\'o}n problem near the {M}inkowski geometry},
  author={Alexakis, Spyros and Feizmohammadi, Ali and Oksanen, Lauri},
  journal={Journal of the European Mathematical Society},
  year={2024}
}

@article{Alexakis_Feiz_Oksanen_22,
	author = {Alexakis, Spyros and Feizmohammadi, Ali and Oksanen, Lauri},
	journal = {Inventiones {M}athematicae},
	pages = {87-138},
	title = {Lorentzian {C}alder{\'o}n problem under curvature bounds},
	volume = {229},
	year = {2022}}

@Book{Katchalov2001,
  Title                    = {Inverse boundary spectral problems},
  Author                   = {Katchalov, Alexander and Kurylev, Yaroslav and Lassas, Matti},
  Publisher                = {Chapman \& Hall/CRC, Boca Raton, FL},
  Year                     = {2001},
  Series                   = {Monographs and Surveys in Pure and Applied Mathematics},
  Volume                   = {123},

  Doi                      = {10.1201/9781420036220},
  ISBN                     = {1-58488-005-8},
  Mrclass                  = {58J50 (35-02 35R30 58J60 93B30)},
  Mrnumber                 = {1889089 (2003e:58045)},
  Mrreviewer               = {Paul E. Sacks},
  Pages                    = {xx+290},
  Url                      = {http://dx.doi.org/10.1201/9781420036220}
}

@article{belishev2006dynamical,
  title={Dynamical inverse problem for a {L}am{\'e} type system.},
  author={Belishev, MI2270698},
  journal={Journal of Inverse \& Ill-Posed Problems},
  volume={14},
  number={8},
  year={2006}
}

@article{eskin2007inverse,
  title={Inverse hyperbolic problems with time-dependent coefficients},
  author={Eskin, Gregory},
  journal={Communications in Partial Differential Equations},
  volume={32},
  number={11},
  pages={1737--1758},
  year={2007},
  publisher={Taylor \& Francis}
}

@Article{Tataru,
  Title                    = {Unique continuation for solutions to {PDE}'s; between {H}\"ormander's theorem and {H}olmgren's theorem},
  Author                   = {Tataru, Daniel},
  Journal                  = {Communications in partial differential equations},
  Year                     = {1995},
  Number                   = {5-6},
  Pages                    = {855--884},
  Volume                   = {20},

  Publisher                = {Dekker}
}

@article{de2020recovery,
  title={Recovery of material parameters in transversely isotropic media},
  author={de Hoop, Maarten V and Uhlmann, Gunther and Vasy, Andr{\'a}s},
  journal={Archive for Rational Mechanics and Analysis},
  volume={235},
  number={1},
  pages={141--165},
  year={2020},
  publisher={Springer}
}

@article{zou2024partial,
  title={Partial Global Recovery in the Elastic Travel Time Tomography Problem for Transversely Isotropic Media},
  author={Zou, Yuzhou},
  journal={Annales de l'Institut Fourier},
  volume={74},
  number={5},
  pages={2077--2139},
  year={2024}
}

@misc{oksanen2026counterexamples,
  title={{Counterexamples to the Lorentzian Calder\'on problem}},
  author={Oksanen, Lauri and Sarkkinen, Miika},
  note={arXiv:2604.20320},
  year={2026}
}

@misc{lassas2026determination,
  title={Determination of an anisotropic perturbation in elastic inverse scattering},
  author={Lassas, Matti and Ma, Shiqi and Oksanen, Lauri and Salo, Mikko and Zhai, Jian},
  note={arXiv:2604.27390},
  year={2026}
}

@misc{2d-elastic-characterization,
  title={Characterization of uniqueness, stability, and reducibility for stiffness tensors in {2D}},
  author={Cocan, Antonio and de Hoop, Maarten V and Ilmavirta, Joonas and Lassas, Matti and V{\'a}rilly-Alvarado, Anthony},
  note={In preparation},
  year={2026}
}

@misc{ashwin-elasticity,
  title={Determination of Wave Speed from interior sources},
  author={Joonas Ilmavirta and Teemu Saksala and Ashwin Tarikere and Gunther Uhlmann},
  note={arXiv:2609.06225},
  year={2026}
}

@book{invitation-to-AG,
  author    = {Karen E. Smith and Lauri Kahanp{\"a}{\"a} and
               Pekka Kek{\"a}l{\"a}inen and William Traves},
  title     = {An Invitation to Algebraic Geometry},
  series    = {Universitext},
  publisher = {Springer},
  address   = {New York},
  year      = {2000},
  doi       = {10.1007/978-1-4757-4497-2},
  isbn      = {978-0-387-98980-8}
}

@article{magma,
    AUTHOR = {Bosma, Wieb and Cannon, John and Playoust, Catherine},
     TITLE = {The {M}agma algebra system. {I}. {T}he user language},
      NOTE = {Computational algebra and number theory (London, 1993)},
   JOURNAL = {J. Symbolic Comput.},
  FJOURNAL = {Journal of Symbolic Computation},
    VOLUME = {24},
      YEAR = {1997},
    NUMBER = {3-4},
     PAGES = {235--265},
      ISSN = {0747-7171,1095-855X},
   MRCLASS = {68Q40},
  MRNUMBER = {1484478},
       DOI = {10.1006/jsco.1996.0125},
       URL = {https://doi-org.ezproxy.rice.edu/10.1006/jsco.1996.0125},
}

\end{document}